	\newcommand{\red}[1]{\textcolor{red}{#1}}
	\theoremstyle{plain}
		\newtheorem{theo}{Theorem}[section]
		\newtheorem{lem}[theo]{Lemma}
		\newtheorem{prop}[theo]{Proposition}
		\newtheorem{defi}[theo]{Definition}
		\newtheorem{cor}[theo]{Corollary}
	\theoremstyle{definition}	
		\newtheorem{remark}[theo]{Remark}
\newcommand{\relmiddle}[1]{\mathrel{}\middle#1\mathrel{}}
\newcommand{\zzz}{\mathbb{Z}^3}
\newcommand{\zd}{\mathbb{Z}^d}
\newcommand{\rrr}{\mathbb{R}^3}
\newcommand{\prob}{\mathbf{P}}
\newcommand{\ust}{\mathcal{U}}
\newcommand{\gamfty}{\gamma_\infty}
\newcommand{\Bx}[1]{B_{x_#1}}
\newcommand{\tS}[1]{t_S(a_{#1})}
\newcommand{\tQ}{\widetilde{Q}}
\newcommand{\vep}{\varepsilon}
\newcommand{\ie}{\textit{i}.\textit{e}.\ }
\newcommand{\Reff}[2]{R_{\mathrm{eff}}(#1,#2)}
\begin{document}
\title{\bf {\large Volume and heat kernel fluctuations for the three-dimensional uniform spanning tree}}
\author{Daisuke Shiraishi   \and  Satomi Watanabe}
\date{Department of Advanced Mathematical Sciences \\ Graduate School of Informatics \\ Kyoto University}

\maketitle
\begin{abstract}
Let ${\cal U}$ be the uniform spanning tree on $\mathbb{Z}^{3}$. We show the occurrence of log-logarithmic fluctuations around the leading order for the volume of intrinsic balls in ${\cal U}$. As an application, we obtain similar fluctuations for the quenched heat kernel of the simple random walk on ${\cal U}$. 
\end{abstract}

\section{Introduction}\label{sec:1}

The aim of this article is to demonstrate an oscillatory phenomenon for the 
heat kernel of the simple random on the three-dimensional uniform spanning tree. To introduce this model of interest here, we start by recalling that Pemantle \cite{P91} proved 
that if $\mathbb{Z}^{d}$ is exhausted by a sequence of finite subgraphs $G_{n}$, then the uniform spanning tree measures on $G_{n}$ weakly converge to some measure supported on spanning forests of $\mathbb{Z}^{d}$. 
The corresponding random graph is called the uniform spanning forest on $\mathbb{Z}^{d}$.  Pemantle \cite{P91} also showed that the uniform spanning forest is a single tree almost surely if $d \le 4$, 
in which case we call it the uniform spanning tree on $\mathbb{Z}^{d}$, 
while it consists of infinitely many trees when $d \ge 5$.  Since 
their introduction, uniform spanning forests have been studied extensively and played an important role in the progress of probability theory, due to connections to various areas such as  loop-erased random walk \cite{Lawler, P91, W96},  electrical networks \cite{BLPS01, Bur-Pem, Kir}, domino tiling \cite{Bur-Pem, Ken}, the random cluster model \cite{Gri, H}, random interlacements \cite{Hut, Szn} and for $d=2$, conformally invariant scaling limits 
\cite{BCK-1, BDW, Hol-Sun, LSW, Sch}. 
Let us mention that the uniform spanning tree is often considered in the same class as various critical statistical physics models since it shares similar properties such as fractal scaling limits with non-trivial scaling exponents and it is one of the few such models for which rigorous results have been proved even for the three-dimensional case \cite{ACHS20, K, LSarx}, which is typically the most difficult case to study. 

Motivating the study of the simple random walk on the uniform spanning forest on $\zd$ is that such a process captures the geometric and spectral properties of  the forest 
and how these depends on the dimension $d$. In particular, 
the random walk displays mean-field behavior for $d \ge 4$, with a logarithmic correction in four dimensions \cite{Hal-Hut, Hut-2}. On the other hand, different (nontrivial) exponents describe the asymptotic behavior of several quantities such as transition density (heat kernel), exit time and mean-square displacement of the random walk  below four dimensions \cite{ACHS20, BM}. 
At least, this is confirmed for $d=2$ and it is strongly believed that this is the case for $d=3$ (see Remark \ref{remarkd=3}).

We now introduce the notation that we need to state our main result of heat kernel fluctuations. 
Let ${\cal U}$ be the uniform spanning tree on $\mathbb{Z}^{3}$. 
We write  $p_{n}^{\cal U} (x,y)$ for the transition density (heat kernel) of the simple random walk on $\ust$, see Section 2.2 for its precise definition. We also let  $\beta \in (1, 5/3]$ be the growth exponent that governs the time-space scaling of 
the three-dimensional loop-erased random walk, which coincides with the Hausdorff dimension of the scaling limit of the three-dimensional loop-erased random walk \cite{S18, S2}, 
see Section 2.1 for details. 
\begin{remark}
	Numerical estimates suggest that $\beta=1.624\cdots$ (see \cite{W10}).
\end{remark}
Our main theorem then demonstrates heat kernel fluctuations of log-logarithmic magnitude as follows.

\begin{theo}\label{main-thm}
There exist deterministic constants $a_{1}, a_{2} > 0$ such that one has  
\begin{equation}\label{main-1}
\liminf_{n\to\infty}  \,  (\log\log{n})^{a_{1}} n^{\frac{3}{3+\beta}}p_{2n}^{\cal U} (0,0)=0, 
\end{equation}
and also
\begin{equation}\label{main-2}
\limsup_{n\to\infty} \, (\log \log n)^{- a_{2}}n^{\frac{3}{3+\beta}}p^{\cal U}_{2n}(0,0)=\infty ,
\end{equation} 
almost surely.
\end{theo}

\

Similar heat kernel fluctuations have been established for Galton-Watson trees \cite{BK06, Cro-Kum} and the uniform spanning tree on $\mathbb{Z}^{2}$ \cite{BCK21}. We will describe some key differences between these models below, but common ingredients in the proofs of such results are corresponding volume fluctuations. In fact, the idea of proof of Theorem \ref{main-thm} is similar to that of \cite{BCK21}*{Corollary 1.2}. Specificaly, in order to prove Theorem \ref{main-thm}, the crucial step is to show that the volume of intrinsic balls (with respect to the graph distance) of ${\cal U}$ also enjoys log-log fluctuations. To be more precise, let $B_{{\cal U}} (0, r)$ be the intrinsic ball in ${\cal U}$ of radius $r$ centered at the origin. Then we have the following volume fluctuations. 

 \begin{theo}\label{main-thm-2}
There exist deterministic constants $a_{3}, a_{4} > 0$ such that one has  
\begin{equation}\label{main-2-1}
\liminf_{r\to\infty}  \,  (\log \log r)^{a_{3}} \,  r^{-\frac{3}{\beta}} \, | B_{{\cal U}} (0, r) |=0, 
\end{equation}
and also
\begin{equation}\label{main-2-2}
\limsup_{r\to\infty} \, (\log \log r)^{- a_{4}} \, r^{-\frac{3}{\beta}} \, | B_{{\cal U}} (0, r) |=\infty 
\end{equation} 
almost surely. Here $|A|$ stands for the cardinality of $A$.
\end{theo}

\begin{remark}
It was 
already proved in \cite{ACHS20}*{Theorem 1.6} that there exist deterministic constants $b_{1}, b_{2}, b_{3}, b_{4} > 0$  and $c_{1}, c_{2} >0$ such that with probability one
\begin{equation}\label{spectral-dim}
c_{1}  n^{-\frac{3}{3 + \beta}}  (\log \log n)^{-b_{1}} \le p^{\cal U}_{2n}(0,0) \le c_{2}  n^{-\frac{3}{3 + \beta}} (\log \log n)^{b_{2}} 
\end{equation}
for large $n$, and also 
\begin{equation}\label{volume-dim}
c_{1}  r^{\frac{3}{\beta}}  (\log \log r)^{-b_{3}} \le  | B_{{\cal U}} (0, r) |  \le c_{2}  r^{\frac{3}{\beta}}  (\log \log r)^{b_{4}}
\end{equation}
for large $r$. Determining the optimal exponents for $a_{i}$ in Theorems \ref{main-thm} and \ref{main-thm-2} seems difficult. 
\end{remark}

\begin{figure}[htb]
\begin{center}
\includegraphics[scale=0.5]{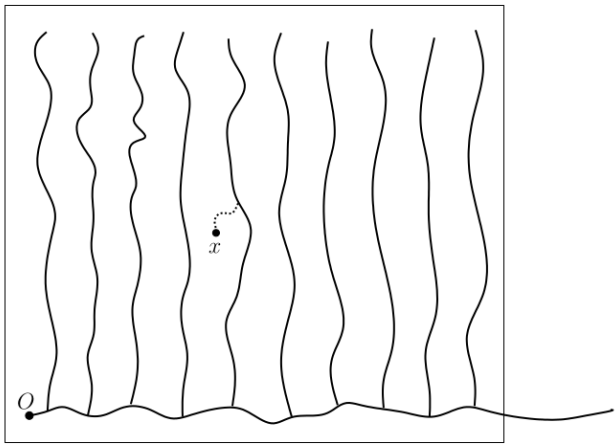}
\caption{Illustration for the comb configuration. The horizontal solid curve stands for the unique infinite path in ${\cal U}$ started at the origin. We force it to keep going to the right with no big backtracking. We also make each vertical solid branch keep going down. For another point $x$, as the dotted curve illustrates, the branch between $x$ and a solid curve has a small length. As a result, if the Euclidean metric between the origin and $x$ is not small, the intrinsic metric from the origin to $x$ is unusually small.}\label{comb}
\end{center}
\end{figure}

\begin{figure}[htb]
\begin{center}
\includegraphics[scale=0.5]{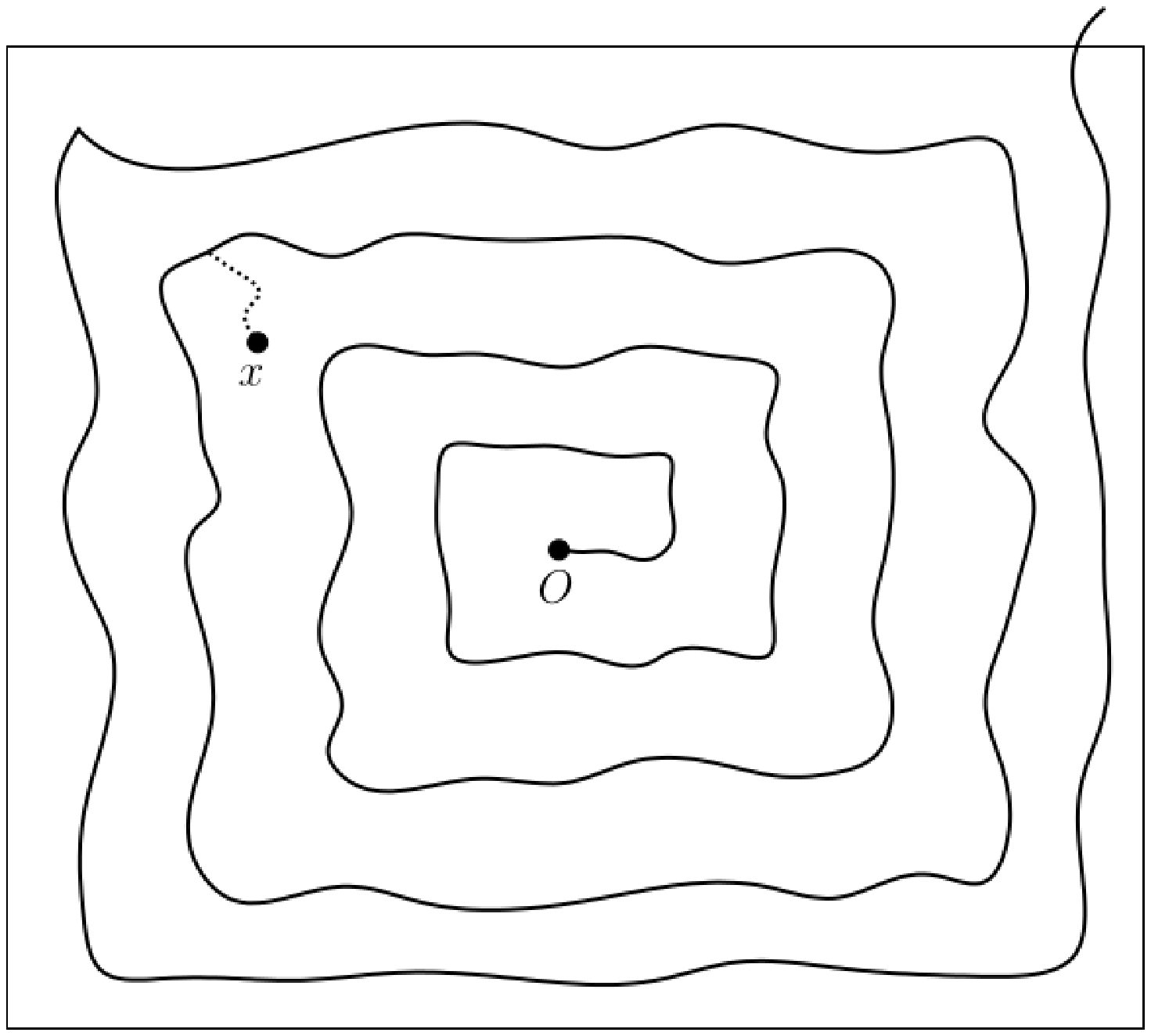}
\caption{Illustration for the spiral configuration. The solid curve stands for the unique infinite path in ${\cal U}$ started at the origin. We make it spiral around the origin many times. This configuration ensures that the intrinsic metric from the origin to $x$ is unusually large if the Euclidean metric between the origin and $x$ is not small.}\label{spiral}
\end{center}
\end{figure}

\begin{remark}\label{remarkd=3}
Rigorously speaking, it is not clarified that the uniform spanning tree on $\mathbb{Z}^{3}$ exhibits \textit{different} exponents than the high-dimensional case since the only information about the growth exponent $\beta$ is that it satisfies $1 < \beta \le 5/3$.  As shown in \eqref{spectral-dim}
, the leading order of the on-diagonal heat kernel is $n^{-\frac{3}{3 + \beta}}$ a.s. in three dimensions, while it equals $n^{-\frac{2}{3}}$ for every component of the uniform spanning forest in higher dimensions \cite{Hal-Hut, Hut-2}.
\end{remark}

We briefly discuss the strategies of our proofs here. It is harder to estimate volume and heat kernel fluctuations for uniform spanning trees than for Galton-Watson trees since uniform spanning trees do not have independence between disjoint subgraphs. In order to obtain Theorem \ref{main-thm-2}, we consider the three-dimensional uniform spanning tree as a collection of small pieces where the probability of events corresponding to those on the whole tree can be calculated. 
Similarly to \cite{BCK21}*{Theorem 1.1}, we consider unlikely configurations of  ${\cal U}$, 
namely ``comb'' and ``spiral'' configurations as depicted in Figures \ref{comb} and \ref{spiral} respectively. Here the comb configuration is constructed in such a way that we obtain an intrinsic ball in ${\cal U}$ with an unusually large size, which enables us to obtain (1.4). 
On the other hand, we use the spiral configuration to make an unusually small ball for the sake of the derivation of \eqref{main-2-1}. 
Although the scenario is 
essentially the same as that for the two-dimensional case in \cite{BCK21}, here we need to deal with a central  hurdle: since the Beurling projection theorem (a property of the simple random walk on $\mathbb{Z}^2$ that it hits any path of $\mathbb{Z}^2$ with high probability, see \cite{LaLi10}*{Theorem 6.8.1} for example) is not available when $d=3$, the construction of such unlikely configurations of ${\cal U}$ via Wilson's algorithm (see Section 2.2 for this algorithm) requires some extra work, which is rather complicated.  We overcome this difficulty through careful use of a 
type of hittability of loop-erased random walks in $\mathbb{Z}^{3}$, as derived in \cite{SaSh18}*{Theorem 3.1}. Combining Theorem \ref{main-thm-2} with the fact that the behavior of the effective resistance metric on ${\cal U}$ is similar to that of the intrinsic metric (see Section 4.2 below for this), Theorem \ref{main-thm} is also proved.

Before we end this section, let us explain the organization of this article. General notation together with background on the uniform spanning tree and loop-erased random walk in $\zzz$ will be introduced in Section 2. 
Then the claim 
(1.4) will be proved in Section 3 and \eqref{main-2-1} will be shown in Section 4. 
Finally, we will show Theorem \ref{main-thm} in Section 5.

\vspace{4mm}
\noindent {\bf Acknowledgements:}
The authors thank David A.~Croydon for his kind explanation of results and techniques in \cite{BCK21}, which enables us to prove the main theorems above. We would also like to thank him for his valuable comments on this article. DS is supported by a JSPS Grant-in-Aid for Early-Career Scientists, 18K13425 and JSPS KAKENHI Grant Number  17H02849, 18H01123, 21H00989, 22H01128 and  22K03336. 
SW is supported by JST, the Establishment of University Fellowships Towards the Creation of Science Technology Innovation, Grant Number JPMJFS2123.


\section{Definitions and background}



	We begin by introducing some notation for subsets of $\zzz$.
	Given two points $x,y\in\zzz$ and a set $A\subset\zzz$, 
		we let $d_E(x,y)=|x-y|$ be the Euclidean distance between $x$ and $y$ and let 
		$\mathrm{dist}(x,A)=\inf\{d_E(x,y) \mathrel{:} y\in A\}$. 
		
	For a set $A\subset\zzz$, we define the inner boundary $\partial_i A$ and the outer boundary
		$\partial A$ of $A$ as follows:
		\begin{align*}
			\partial_i A&=\{x\in A\mathrel{:} \mbox{there exists }y\in\zzz\setminus A\mbox{ such that }|x-y|=1\},	\\
			\partial A&=\{x\in\zzz\setminus A\mathrel{:} \mbox{there exists }y\in A\mbox{ such that }|x-y|=1\}.
		\end{align*}
			
	Given two points $x,y\in\zzz$, we write $x\sim y$ if $|x-y|=1$. 
	A finite or infinite sequence of vertices $\gamma=(\gamma_0,\gamma_1,\cdots)$ is called
		a \textbf{path} if $\gamma_{i-1}\sim \gamma_i$. 
	For two paths $\gamma=(\gamma_0,\gamma_1,\cdots, \gamma_k)$ and 
		$\gamma'=(\gamma'_0,\gamma'_1,\cdots)$ with $\gamma_k=\gamma'_0$, 
		we define the concatenation $\gamma\oplus \gamma'$ of them by 
	\[
		\gamma\oplus \gamma'=(\gamma_0,\gamma_1,\cdots,\gamma_k,\gamma'_1,\cdots).
	\]

	\subsection{Loop-erased random walk}
	Given a finite path $\gamma=(\gamma_0,\cdots,\gamma_k)$, 
		we let $\mathrm{len}(\gamma)=k$ be the length of $\gamma$ 
		and let $\mathrm{LE}(\gamma)$ be the chronological loop erasure of $\gamma$, 
		which is defined as follows.
	Set 
	\[
		T(0)=\sup\{j\mathrel{:} \gamma_j=\gamma_0\}
	\]
	and $\widetilde{\gamma}_0=\gamma_{T(0)}$. 
	Inductively, we set 
	\begin{equation}\label{defLE}
		T(i)=\sup\{j\mathrel{:}\gamma_j=\gamma_{T(i-1)+1}\},\ \ \ 
		\widetilde{\gamma}_i=\gamma_{T(i)}.
	\end{equation}
	Let
	\[
		l=\inf\{j\mathrel{:}T(j)=k\}.
	\]
	Then, $\mathrm{LE}(\gamma)=(\widetilde{\gamma}_0,\cdots,\widetilde{\gamma}_l)$. 
	The \textbf{loop-erased random walk (LERW)} is the random simple path obtained as the 
		loop-erasure of a path of the simple random walk. 

	The exact same definition also applies to the infinite simple random walk (SRW) $S$ on $\zzz$.
	Since $S$ is transient, the times $T(i)$ in (\ref{defLE}) are finite almost surely 
		for every $i\in\mathbb{Z}$. 
	The infinite simple path $\mathrm{LE}(S)$ is called the 
		\textbf{infinite loop-erased random walk (ILERW)}. 
	
	Now we introduce the growth exponent of the three-dimensional LERW. 
	We start SRW on $\zzz$ at the origin and run until it exits the ball of radius $n$ 
		centered at the origin. 
	Let $M_n$ be the length of its loop-erasure. 
	We denote the law of $S$ by $P$ and the corresponding expectation by $E$. 
	The growth exponent is defined by the limit
	\begin{equation}\label{growthbeta}
		\beta\coloneqq\lim_{n\to\infty}\frac{\log E(M_n)}{\log n},
	\end{equation}
	if exists. 	
	It is proved that the limit exists in \cite{S18} and that $\beta\in(1,5/3]$ in \cite{L99}. 
	Numerical estimates suggest that $\beta=1.624\cdots$, see \cite{W10}. 
	Moreover, following exponential tail bound of $M_n$ is shown in \cite{S18}. 
	\begin{theo}(\cite{S18}*{Theorem 1.1.4})
	There exists $c>0$ such that for all $n\ge 1$ and $\kappa\ge 1$,
	\[
		\prob(M_n\ge \kappa E(M_n))\le 2\exp\{-c\kappa\},
	\]
	and for any $\vep\in(0,1)$, there exist $0<c_\vep,C_\vep<\infty$ such that for 
		all $n\ge 1$ and $\kappa\ge 1$, 
	\[
		P(M_n\le \kappa^{-1}E(M_n))\le C_\vep\exp\{-c_\vep\kappa^{\frac{1}{\beta}-\vep} \}.
	\]
	\end{theo} 
	
	\subsection{Uniform spanning tree}
	A subgraph of a connected graph $G$ is called a \textbf{spanning tree} on $G$ 
		if it is connected, contains all vertices of $G$ and has no cycle. 
	Let $\mathcal{T}(G)$ be the set of all spanning trees on $G$. 
	For a finite connected graph $G$, a random tree choosen according to 
		the uniform measure on $\mathcal{T}(G)$ is called 
		the \textbf{uniform spanning tree (UST)} on $G$. 
	We can define the uniform spanning tree on $\zzz$, or the three-dimensional uniform spanning tree, 
		as the weak limit of the USTs on the finite boxes $\zzz\cap [-n,n]^3$, 
		see \cite{P91}.
	
	We will assume that the three-dimensional UST $\ust$ is built on 
		a probability space $(\Omega,\mathcal{F},\prob)$ and 
		we denote the corresponding expectation by $\mathbf{E}$. 
	Note that, $\prob$-a.s., $\ust$ is a one-ended tree (\cite{P91}). 
	For any $x,y\in\zzz$ and any connected subset $A\subset\zzz$, 
		we write $\gamma(x,y)$ for the unique self-avoiding path between $x$ and $y$, 
		$\gamma(x,A)$ for the shortest path among $\{\gamma(x,y)\mathrel{:}y\in A\}$ 
		if $x\not\in A$, and $\gamma(x,A)=\{x\}$ if $x\in A$. 
	We let $\gamma(x,\infty)$ for the unique infinite self-avoiding path started at $x$. 
	We denote by $d_\ust$ the intrinsic metric on the graph $\ust$, 
		\ie $d_\ust(x,y)=\mathrm{len}(\gamma(x,y))$. 
	Similarly, we denote by $d_E$ the Euclidean metric on $\zzz$ and 
		for any $x\in\zzz$ and connected $A\subset \zzz$, 
		we let $d_E(x,A)=\inf_{y\in A} d_E(x,y)$
	
	We define balls in the intrinsic metric by
	\begin{equation}\label{intrball}
		B_\ust(x,r)=\{y\in\zzz \mathrel{:} d_\ust(x,y)\le r\}
	\end{equation}
	and let $|B_\ust(x,r)|$ be the number of points in $B_\ust(x,r)$. 
	We denote balls in the Euclidean metric by
	\begin{equation}
		B(x,r)=\{y\in\zzz \mathrel{:} d_E(x,y)\le r \},
	\end{equation}
	and balls in $l_\infty$-metric $d_\infty$, \ie cubes, by
	\begin{equation}
		B_\infty(x,r)=\{y\in\zzz \mathrel{:} d_\infty(x,y)\le r\}.
	\end{equation}

	Throughout the paper, we let $S^z$ denote a simple random walk on $\zzz$ started at $z\in\zzz$ 
		and $P^z$ denote its law.
	We take $(S^z)_{z\in\zzz}$ to be independent. 
	
	Now we recall Wilson's algorithm. 
	This method to construct UST with LERW was first introduced to finite graphs (\cite{W96})
		and then extended to transient $\zd$ including $\zzz$ (\cite{BLPS01}). 
	Let $\{v_1,v_2,\cdots\}$ be an ordering of the vertices of $\zzz$ and 
		let $\gamfty$ be the infinite LERW started at the origin. 
	Given a path $\gamma$ and a set $A\subset\zzz$, 
		we let $\tau(A)=\tau_\gamma(A)=\min\{i\ge 0\mathrel{:}\gamma_i\in A\}$. 
	We define a sequence of subtrees of $\zzz$ inductively as follows: 
	\begin{align*}
		\ust_0&=\gamfty,	\\
		\ust_i&=\ust_{i-1}\cup\mathrm{LE}(S^{z_i}[0,\tau(\ust_{i-1})]),\ i\ge 1,	\\
		\ust&=\cup_i \ust_i.
	\end{align*}
	Then by \cite{BLPS01}, the random tree $\ust$ has the same law 
		as the three-dimensional UST. 
	It follows that the law of $\ust$ above does not depend on the ordering of $\zzz$.

	Finally we define the simple random walk on $\ust$. 
	We denote by $\mu_\ust$ the measure on $\zzz$ such that $\mu_\ust(\{x\})$ is given 
		by the number of edges of $\ust$ which contain $x$.  
	For a given realization of $\ust$, the simple random walk on $\ust$ is the discrete time 
		Markov process $X^\ust=((X_n^\ust)_{n\ge 0},(P_x^\ust)_{x\in\zzz})$ 
		which at each step jumps from its current location to a uniformly chosen neighbor in $\ust$. 
	For $x\in \zzz$, the law $P_x^\ust$ is called the \textbf{quenched law} of the simple random 
		walk on $\ust$. 
	We write 
	\begin{equation}\label{hkdef}
		p_n^\ust (x,y)=\frac{P_x^\ust(X_n^\ust=y)}{\mu_\ust(\{y\})},\ x,y\in\zzz,
	\end{equation}
	for the \textbf{quenched heat kernel}. 

	\subsection{Effective resistance}
	Now we introduce the effective resistance between two subgraphs of a connected subgraph, 
		which we will use to estimate upper heat kernel fluctuaions in Section 4. 
	\begin{defi}
	Let $G=(V,E)$ be a connected graph. 
	For functions $f$ and $g$ on $V$, we define a quadratic form $\mathcal{E}$ by 
	\[
		\mathcal{E}(f,g)=\frac{1}{2}\sum_{\substack{x,y\in V \\ x\sim y}} (f(x)-f(y))(g(x)-x(y)).
	\]
	If we regard $G$ as an electrical network with a unit resistance on each edge in $E$, 
		then for disjoint subsets $A$ and $B$ of $V$, the \textbf{effective resistance} between $A$ and $B$ is 
		defined by 
	\begin{equation}\label{defofreff}
		\Reff{A}{B}^{-1}=\inf\{\mathcal{E}(f,f)\mathrel{:}\mathcal{E}(f,f)<\infty, f|_A=1,f|_B=0\}.
	\end{equation}
	Let $\Reff{x}{y}=\Reff{\{x\}}{\{y\}}$.
	\end{defi}
	It is known that $\Reff{\cdot}{\cdot}$ is a metric on $G$, see \cite{Wei19}. 

\section{Upper volume fluctuations}


	In this section, we prove (\label{main-2-2}), upper volume fluctuations of 
		log-logarithmic magnitude in Theorem \ref{thmvup}. 
	The key ingredient of the proof is the following lemma, 
		which provides a lower bound on an upper tail of the volume of intrinsic balls in the three-dimensional UST $\ust$. 
	
	Recall that $\beta$ is the growth exponent of the three-dimensional LERW defined by 
		(\ref{growthbeta}) and $B_\ust$ stands for intrinsic balls in $\ust$ defined by (\ref{intrball}). 

	\begin{prop}\label{VLower}
		Let $\ust$ be the three-dimensional UST build on a probability space $(\Omega,\mathcal{F},\prob)$. 
		Then there exist $c_1,c_2>0$ such that 
			for all $\lambda>0$ and $r\ge 1$,
		\begin{equation}
			\prob(|B_\ust(0,r)|\ge \lambda r^{3/\beta})\ge c_1\exp\{-c_2\lambda^{(\beta-1)/\beta}\log \lambda \} \label{vlower},
		\end{equation}
		where $\beta$ is the growth exponent of the three-dimensional LERW. 
	\end{prop}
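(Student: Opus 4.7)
The plan is to engineer, via Wilson's algorithm, an event of probability at least $c_1\exp(-c_2\lambda^{(\beta-1)/\beta}\log\lambda)$ on which $|B_\ust(0,r)|\ge\lambda r^{3/\beta}$. The starting observation is volumetric: setting $R:=c_0\lambda^{1/3}r^{1/\beta}$ for a small constant $c_0$ makes the lattice Euclidean ball $\{x\in\mathbb Z^3:|x|\le R\}$ contain at least $\lambda r^{3/\beta}$ vertices, so it suffices to force this entire set to lie inside $B_\ust(0,r)$.

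\textbf{Wilson setup and target event.} I sample the subtree of $\ust$ spanning $\{0\}\cup\{x\in\mathbb Z^3:|x|\le R\}$ by Wilson's algorithm rooted at $0$: first draw the infinite LERW $\gamma_\infty$ from $0$, then iteratively loop-erase independent simple random walks from the remaining lattice points until each hits the growing tree. For each such $x$, the distance $d_\ust(0,x)$ equals the length of its loop-erased branch plus the intrinsic distance along $\gamma_\infty$ from $0$ to its attachment point. The target event asks (i) that $\gamma_\infty$ reaches Euclidean distance $R$ within its first $r/2$ steps --- a LERW much straighter than the typical $\asymp R^\beta=\lambda^{\beta/3}r$ dictated by (\ref{growthbeta}) --- and (ii) that every subsequent loop-erased branch added by Wilson's has length at most $r/2$ and attaches within intrinsic distance $r/2$ of $0$ along $\gamma_\infty$. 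Together (i) and (ii) give $d_\ust(0,x)\le r$ for all $|x|\le R$.

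\textbf{Probability bound.} Given (i), part (ii) becomes a standard Wilson-type estimate: the set $\gamma_\infty$ is a spatially extended curve through the Euclidean ball of radius $R$, and a harmonic-measure-type argument combined with (\ref{growthbeta}) shows that loop-erased branches from $|x|\le R$ attach quickly enough to $\gamma_\infty$ with at least constant conditional probability once the intrinsic budget is allocated. The heart of the matter is (i). My plan is a scale-by-scale construction across the $\asymp\log\lambda$ dyadic scales between $r^{1/\beta}$ and $R$: at each scale, require $\gamma_\infty$ to advance by a factor of two in Euclidean distance inside a prescribed intrinsic time budget, and multiply the scale-level probabilities via the strong Markov property for the simple random walk underlying $\gamma_\infty$, using one-scale LERW displacement deviation estimates derived from (\ref{growthbeta}).

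\textbf{Main obstacle.} The delicate step is extracting precisely the exponent $(\beta-1)/\beta$ with its $\log\lambda$ correction in the probability bound on (i). The $\log\lambda$ factor naturally corresponds to the $\asymp\log\lambda$ dyadic scales involved, but a naive equal-budget decomposition yields the strictly worse exponent $\lambda^{\beta/(3(\beta-1))}$ that one obtains by a single super-straight LERW. Producing the correct exponent therefore requires both a sharp quantitative input at each scale (beyond the bare definition (\ref{growthbeta}) of $\beta$) and an optimized allocation of the total intrinsic time among shells; it may well be necessary to refine the target event to allow several tendrils of $\gamma_\infty$ rather than a single nearly-straight arm, so as to exploit additional conditional independence. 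Identifying this refined construction, and balancing the resulting scale-by-scale probabilities to meet the target exponent, is where the bulk of the work will lie.
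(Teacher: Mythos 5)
Your volumetric setup is right — forcing the lattice ball of radius $R\asymp\lambda^{1/3}r^{1/\beta}$ into $B_\ust(0,r)$ is exactly the right target, and building the relevant piece of $\ust$ via Wilson's algorithm rooted at $0$ is the right tool. Your exponent sanity check is also correct: a single near-ballistic arm of $\gamma_\infty$ reaching distance $R$ in only $r$ steps costs $\exp\bigl(-c\lambda^{\beta/(3(\beta-1))}\bigr)$, which is strictly more expensive than the target $\exp\bigl(-c\lambda^{(\beta-1)/\beta}\log\lambda\bigr)$. But at that point your proof stops: "it may well be necessary to refine the target event to allow several tendrils" is an acknowledgement that the plan does not close, not a construction. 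The exponent $(\beta-1)/\beta$ together with the $\log\lambda$ factor is not something you can obtain by a cleverer allocation of a single arm's time budget; it requires a genuinely different geometry.

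The paper supplies that geometry with a \emph{comb} configuration. Fix $m$ and $N$ with $Nm^\beta\asymp r$ and $N^3m^3\asymp\lambda r^{3/\beta}$, so $N\asymp\lambda^{\beta/(3(\beta-1))}$, and lay out an $N\times N\times N$ array of side-$m$ boxes. The first Wilson branch $\gamma_\infty$ is forced (events $A_j$) to thread a row of $N$ boxes along the $x$-axis with gambler's-ruin control on backtracking ($q=m/N^2$), which is where the $N^{-2}$ per box in Lemma~\ref{c-eta} comes from. Then $N$ independent walks are run in tubes parallel to the $y$-axis to hit $\gamma_\infty$, and $N^2$ more in tubes parallel to the $z$-axis to hit the resulting comb. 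Each box contributes $\asymp m^\beta$ to the intrinsic length of the branch passing through it (events $E_j$), and every point in a small ball around each box center is forced to attach with a short path (hittability events $F_j$, verified via cut points $B_j$ and the hittability estimate of \cite{SaSh18}). Thus every point in the $(Nm)^3$ cube sits within intrinsic distance $\asymp Nm^\beta\asymp r$ of $0$, giving the volume bound, while the probability is a product over boxes that organizes itself into the claimed exponent in $\lambda$ with a $\log\lambda$ correction coming from the $N^{-2}$ per-box factors.

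Two concrete gaps in your write-up therefore: (1) Your step (ii), asserting that conditional on a fast $\gamma_\infty$ the remaining Wilson branches attach within intrinsic budget $r/2$ "with at least constant conditional probability" by "a harmonic-measure-type argument," is not correct as stated and is in fact the hard part. Absorption of branches quickly is controlled, not automatic; the paper needs cut times (Definition~\ref{cuttimedef}) to decompose the loop erasure across boxes, the comparison $\xi_j$ vs.\ $\xi_j'$ to bound accumulated lengths, and the quantitative hittability events $F_j(\eta)$. (2) Your step (i), as you yourself note, targets the wrong exponent; the fix is not to optimize one arm across shells but to replace a single long arm by $\asymp N^2$ short arms threading an $N^3$-box comb, trading a single large-deviation event for many independent moderate ones. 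Without specifying that construction and proving a per-box estimate of the form (\ref{onebox}), the proposal does not yield (\ref{vlower}).
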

	
	\begin{remark}
		See \cite{ACHS20}*{Proposition 6.1} 
			for an exponential upper bound for the probability in (\ref{vlower}).
	\end{remark}

\subsection{The comb configuration in the UST}	
	We explain an idea of the proof of (\ref{vlower}) here, which is inspired by 
		the proof of (4.1) of \cite{BCK21}*{Lemma 4.1}. 
	We consider a cube of $N\times N\times N$ boxes, each of size $m\times m\times m$. 
	WE take a box on a corner of the cube and let the box be centered at the origin. 
	Described in Figure 1 is the boxes which has an intersection with the 
		$xy$ plain.  
	For each $j\ge 0$, let 
	\begin{equation}\label{xjdefi}
		x_j=(jm,0,0)\in\zzz, \ \ \ \Bx{j}=B_\infty(x_j,m/2), 
	\end{equation}
	\ie $\Bx{j}$ is the cube of side-length $m$ centered at $x_j$ (see Section 2.2 for the definition of $B_\infty$).
	
	Let $\gamfty$ be the infinite LERW started at the origin, which is the first branch in Wilson's algorithm to generate 
		$\ust$. 
	We consider the event $A_{x_1}$ which is the intersection of the following events:
	\vspace{-0.5\baselineskip}
	\begin{itemize}
		\item $\gamfty$ moves toward the right until it exits from a ``tube'' $\bigcup_{j=1}^N \Bx{j}$ without backtracking.
		\item The number of points in $\gamfty\cap \Bx{j}$ is bounded above by $m^\beta$ for all $1\le j\le N$.
		\item For some snall $\vep>0$, with high probability, each point in $B(x_j,\vep m)~(j=1,\cdots N)$ is connected to $\gamfty$ with a path of length of order $m^\beta$. 
	\end{itemize}
	\vspace{-0.5\baselineskip}
	
	Next we run SRWs $S^{(j)},~j=1,2,\cdots,N$ independent of $\gamma$ and each other 
		started at the points $(jm,Nm,0)$. 
	We consider the event $A_{x_2}$ where each $S^{(j)}$ moves in a ``tube'' parallel to the $y$ axis until it hits $\gamma$, 
		the number of points in its loop erasure is bounded above by $Nm^\beta$ 
		and every point in a small Euclidean ball around the center of each box is connected to the loop erasure with a short path. 
		
	Finally, we consider the corresponding event $A_{x_3}$ for independent SRWs started at $(jm,km,Nm)$ until they hit 
		the already constructed subtree in the tubes parallel to the $z$ axis. 

	Note that if the intersection $A_{x_1}\cap A_{x_2}\cap A_{x_3}$ occurs, it leads to a lower bound of the volume of a ball in intrinsic metric. 
	Once we have a lower bound of the probability of the event $A$, we consider LERWs 
		satisfying the same condition and paralell to $y$ and $z$ axis. 

	\begin{figure}[htb]
		\centering
		\includegraphics[width=0.6\linewidth]{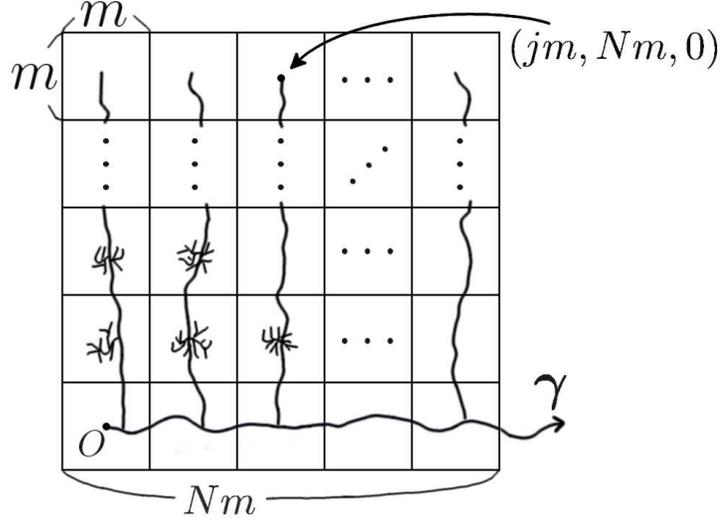}\label{volupperfig}
		\caption{The event $A_{x_1}\cap A_{x_2}\cap A_{x_3}$ to consider for upper volume fluctuations}
	\end{figure}
	
	In the remainder of this subsection, we will establish a lower bound of the probability of the 
		event $A_{x_1}\cap A_{x_2}\cap A_{x_3}$ in Lemma \ref{balls} and in Lemma \ref{x1axis}. 
	In order to do so, we follow an argument of \cite{LS21}, Section 4, which make use of the cut 
		points of the three-dimensional SRW. 

	We begin with defining several events. 
	\begin{defi}\label{defsec3}
	For $a<b$, we define 
	\begin{align*}
		Q[a,b]&=\{(x^1,x^2,x^3)\in\zzz\mathrel{:} a\le x^1\le b, -m\le x^2,x^3\le m\},	\\
		Q(a)&=\{(x^1,x^2,x^3)\in\zzz\mathrel{:}x^1=a, -m\le x^2,x^3\le m\}.
	\end{align*}
	We also set 
	\begin{align}
		\tQ(a)=\{(x^1,x^2,x^3)\in\zzz\mathrel{:}x^1&=a, -m/2\le x^2,x^3\le m/2\}, \notag	\\
		R_j=\{(x^1,x^2,x^3)\in\zzz\mathrel{:}x^1&=jm,\ |x^2|^2+|x^3|^2< m^2/100\} \notag	\\
		\cup&\{(x^1,x^2,x^3)\in\zzz \mathrel{:} x^1=jm,\ |x^2|^2+|x^3|^2> m^2/64\}. \label{R_j}
	\end{align}
	\end{defi}
	
	\begin{figure}[htb]
		\centering
		\includegraphics[width=0.6\linewidth]{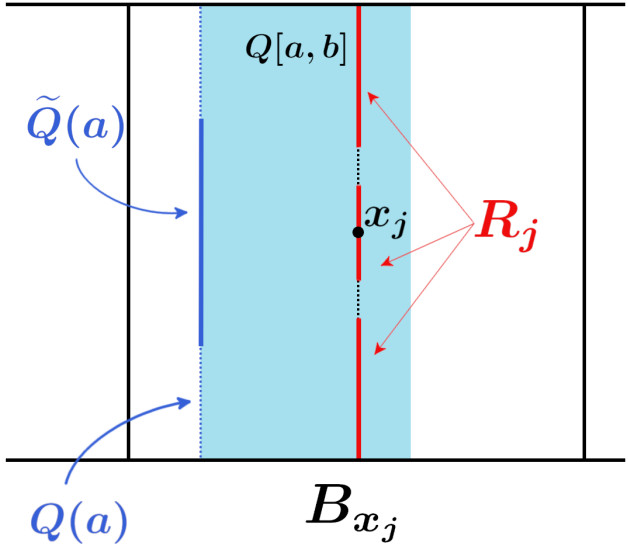}\label{figureQR}
		\caption{Sets $Q[a,b]$, $Q(a)$, $\widetilde{Q}(a)$ and $Rj$}
	\end{figure}
	
	Note that setting $a_j=(j-1/2)m$, it follows that $Q[a_j,a_{j+1}]=\Bx{j}$ and that 
		$Q(a_{j+1})$ corresponds to the right face of $\Bx{j}$ (see (\ref{xjdefi}) for the definition of $\Bx{j}$) 

	Now we consider the SRW $S$ on $\zzz$ started at the origin. 
	By linear interpolation, we may assume that $S(k)$ is defined for every non-negative 
		real $k$ and $S[0,\infty)$ is a continuous curve. 
	For a continuous curve $\lambda$ in $\rrr$ , we define 
	\[
		t_\lambda(a)=\inf\{k\ge 0\mathrel{:} \lambda(k)\in Q(a)\}.
	\]
	
	Let 
	\begin{align}
		N&=(\log\log m)^{1/2},	\\
		q&=m/N^2. \label{defofq}
	\end{align}
	Using $t_S(a)$, we define events $A_j$ as following: 
	\begin{align}
		A_0=\{\tS{1}&<\infty, S(\tS{1})\in \tQ(a_1), S[0,\tS{1}]\subset \Bx{0}, S[t_S(a_1-q),\tS{1}]\cap Q(a_1-2q)=\emptyset\},	\notag	\\
		A_j=\{\tS{j}&<\tS{j+1}<\infty, S(\tS{j+1})\in\tQ(a_{j+1}), S[\tS{j},\tS{j+1}]\subset Q[a_j-q,a_{j+1}]\setminus R_j,	\notag	\\
		\label{A_j}S&[t_S(a_{j+1}-q),\tS{j+1}]\subset Q[a_{j+1}-2q,a_{j+1}]\}\ \mbox{for}\ j\ge 1.
	\end{align}
	The event $A_0$ guarantees that $S$ exits $\Bx{0}$ from $\tQ(a_1)$ and has no big backtracking 
		from $t_S(a_1-q)$ to $\tS{1}$. 
	For $j\ge 1$, the event $A_j$ ensures that once $S$ enters $\Bx{j}$, 
		it keeps moving to the right until hitting $Q(a_{j+1})$. 
	The last condition of $A_j$ requires that 
		$S$ has no big backtracking in $[t_S(a_{j+1}-q),\tS{j+1}]$. 
	We note that the event $A_0$ (resp. $A_j,~j\ge 1$) is measurable with respect to 
		$S[0,\tS{1}]$ (resp. $S[\tS{j},\tS{j+1}]$). 
		
	\begin{figure}[htb]
		\centering
		\includegraphics[width=0.6\linewidth]{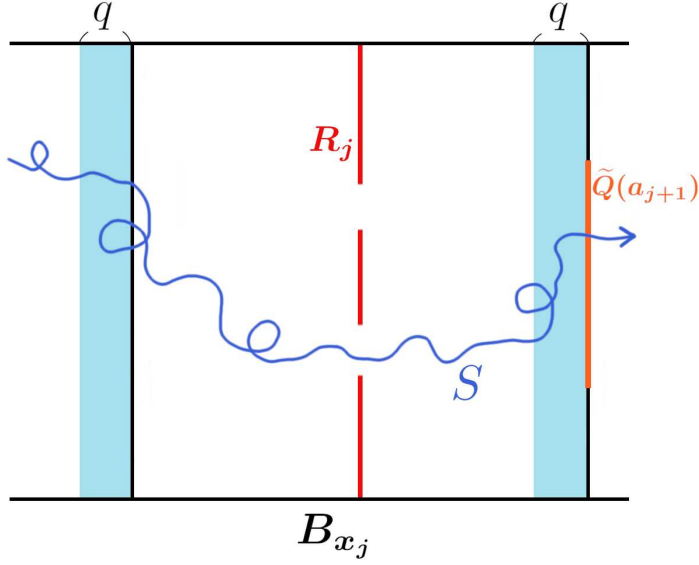}\label{figureAj}
		\caption{Definition of the event $A_j\ (j\ge 1)$}
	\end{figure}
		
	We set 
	\begin{equation}\label{G_j}
		G_j=\bigcap_{k=0}^j A_k.
	\end{equation}

	We next consider a cut time with special properties for the SRW. 
	
	\begin{defi}\label{cuttimedef}
	Suppose that the event $A_j$ defined in (\ref{A_j}) occurs. 
	For each $j\ge 1$, we call $k$ is a nice cut time in $\Bx{j}$ if it satisfies the following conditions: 
	\begin{enumerate}[(i)]
		\item $t_S(a_j+\frac{q}{2})\le k\le t_S(a_j+q)$, 
		\item $S[\tS{j},k]\cap S[k+1,\tS{j+1}]=\emptyset$, 
		\item $S[k,\tS{j+1}]\cap Q(a_j)=\emptyset$, 
		\item $S(k)\in Q[a_j+\frac{q}{2},a_j+q]$. 
	\end{enumerate}
	If $k$ is a nice cut time in $\Bx{j}$, then we call $S(k)$ a nice cut point in $\Bx{j}$. 
	\end{defi}
	
	\begin{figure}[htb]
		\centering
		\includegraphics[width=0.6\linewidth]{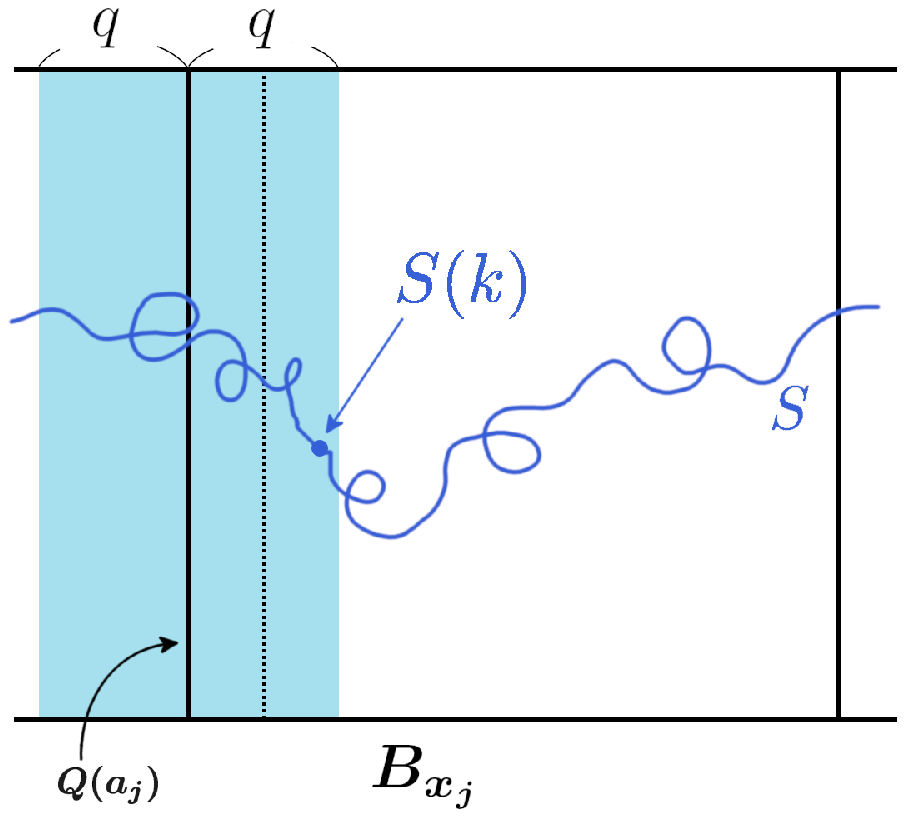}\label{figureQw}
		\caption{Example of nice cut point in $\Bx{j}$}
	\end{figure}
	
	We define events $B_j$ by 
	\[
		B_j=\{S \mbox{ has a nice cut point in }\Bx{j}\},
	\]
	for each $j\ge 1$. 
	Note that event $B_j$ is measurable with respect to $S[\tS{j},\tS{j+1}]$. 
	We define 
	\begin{equation}\label{H_j}
		H_j=\bigcap_{k=1}^j B_k.
	\end{equation}

	Now we consider two random curves $\xi_j$ and $\xi_j'$ defined as follows. 
	We set 
	\[
		\xi_j=\mathrm{LE}(S[0,\tS{j+1}]),
	\]
	and 
	\begin{align}
		\lambda_j&=\mathrm{LE}(S[\tS{j},\tS{j+1}])\mbox{\hspace{1em}for }j\ge 0, \label{lamdef}	\\
		\xi_0'&=\xi_0,\ \ \ \ \xi_j'=\xi_0'\oplus \lambda_1\oplus \cdots \oplus \lambda_j\mbox{\hspace{1em}for }j\ge 1.	\label{xidef}
	\end{align}

	\begin{figure}[htbp]
	\begin{minipage}[b]{0.31\linewidth}
		\centering
		\includegraphics[width=1\linewidth]{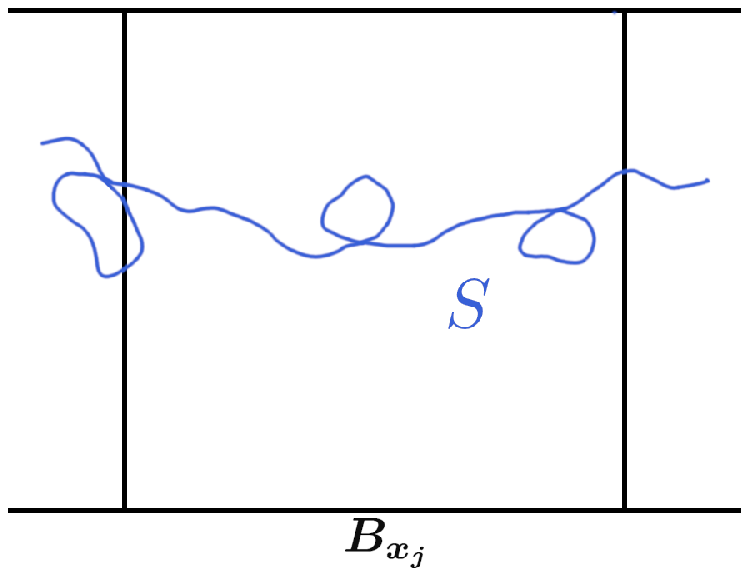}
		\subcaption{Original random walk $S$}
	\end{minipage}
	\hspace{0.01\linewidth}
	\begin{minipage}[b]{0.31\linewidth}
		\centering
		\includegraphics[width=1\linewidth]{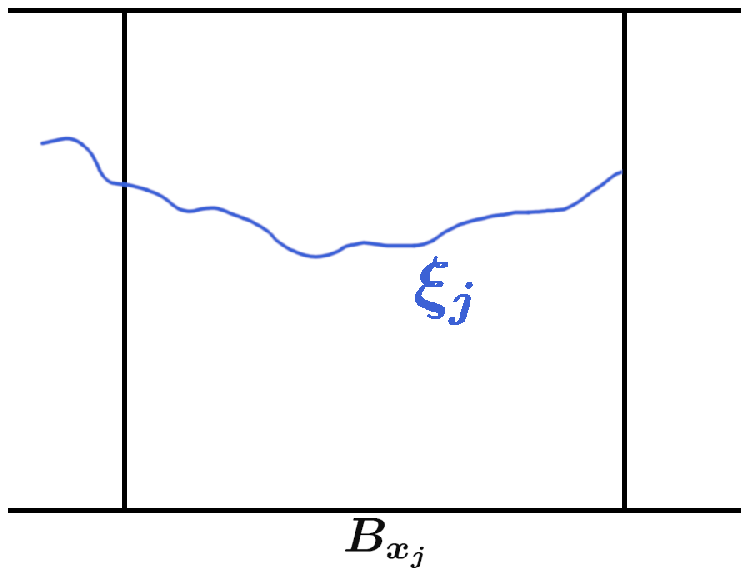}
		\subcaption{$\xi_j$}
	\end{minipage}
	\hspace{0.01\linewidth}
	\begin{minipage}[b]{0.31\linewidth}
		\centering
		\includegraphics[width=1\linewidth]{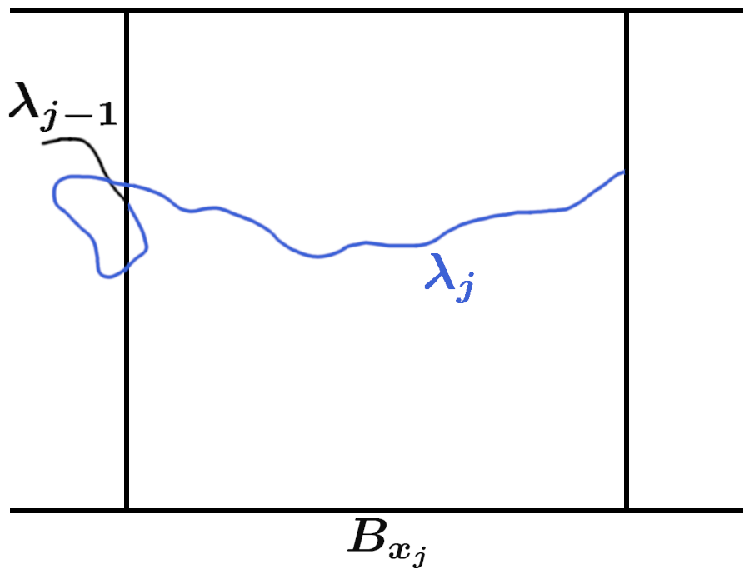}
		\subcaption{$\xi'_j$}
	\end{minipage}
	\caption{Examples of $\xi_j$ and $\xi'_j$}
	\end{figure}
	
	Note that $\xi_j'$ is not necessarily a simple curve and thus $\xi_j\neq\xi_j'$ in general. 
	However, the next lemma from \cite{LS21} shows that the difference between these two curves 
		is small on the event $G_j\cap H_j$. 

	\begin{lem}[\cite{LS21}*{Lemma 4.3}]
		Let $j\ge 1$.
		Suppose that $G_j\cap H_j$ defined in (\ref{G_j}) and (\ref{H_j}) occurs. 
		Then, for the length of $\xi_j$ and $\xi_j'$, we have
		\begin{equation}\label{2len}
			\mathrm{len}(\xi_j)\le \mathrm{len}(\xi_0')+\sum_{k=1}^j\left\{\mathrm{len}(\lambda_k)+\left|\xi_j\cap Q[a_k-q,a_k+q]\right|\right\},
		\end{equation}
		where for $A\subset\rrr$, we write $|A|$ for the number of points in $A\cap\zzz$. 
	\end{lem}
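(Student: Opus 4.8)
The plan is to compare the loop-erasure $\xi_j = \mathrm{LE}(S[0,\tS{j+1}])$ with the concatenation $\xi_j' = \xi_0' \oplus \lambda_1 \oplus \cdots \oplus \lambda_j$ box by box, using the nice cut points to localize where the two curves can disagree. The point is that on the event $G_j \cap H_j$, the walk $S$ decomposes into segments $S[\tS{k},\tS{k+1}]$ that are almost independent in the sense that loop-erasing the whole path and loop-erasing each segment separately give the same answer \emph{outside} a neighbourhood of the cut points. Since a nice cut point in $\Bx{k}$ lies in $Q[a_k+\tfrac q2, a_k+q]$ and the segment between cut points in $\Bx{k}$ and $\Bx{k+1}$ stays, by $A_k$, inside $Q[a_k-q,a_{k+1}]\setminus R_k$ with no large backtracking near the faces, the only overlap that can cause $\xi_j\neq\xi_j'$ is confined to slabs $Q[a_k-q,a_k+q]$ around the faces.

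First I would set up notation for the nice cut times: for each $1\le k\le j$ pick a nice cut time $u_k$ in $\Bx{k}$ (guaranteed by $B_k$), and also record $u_0 = \tS{1}$ of sorts, using property (iii) that $S[u_k,\tS{j+1}]$ never returns to $Q(a_k)$, together with the fact that, by $A_k$ and the last condition in $A_j$, $S[\tS{j},\tS{j+1}]$ crosses each slab $Q[a_k-2q,a_k]$ only once going rightward for $k\le j$. Next I would argue that a loop of $S[0,\tS{j+1}]$ that gets erased either lies entirely within a single box region $Q[a_k-q,a_{k+1}]$ — in which case it is also a loop of the corresponding segment $S[\tS{k},\tS{k+1}]$ and is erased in forming $\lambda_k$ — or else it straddles a face $Q(a_k)$, and then the cut-point property forces both of its endpoints to lie near that face, inside $Q[a_k-q,a_k+q]$. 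Consequently every vertex of $\xi_j$ is either a vertex of some $\lambda_k$ (hence of $\xi_j'$) or lies in one of the slabs $\bigcup_k Q[a_k-q,a_k+q]$.

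From this I would conclude that $\mathrm{len}(\xi_j) \le \mathrm{len}(\xi_0') + \sum_{k=1}^{j}\mathrm{len}(\lambda_k) + \bigl|\xi_j \cap \bigcup_{k=1}^j Q[a_k-q,a_k+q]\bigr|$, and since the slabs $Q[a_k-q,a_k+q]$ are disjoint (as $2q < m$), the last term equals $\sum_{k=1}^j |\xi_j \cap Q[a_k-q,a_k+q]|$, which is exactly \eqref{2len}. The step requiring the most care is the case analysis of erased loops straddling a face: one has to use all four defining properties of a nice cut time simultaneously — the location (iv), the quantitative position (i), the one-sided confinement (iii), and the self-avoidance (ii) — to rule out that a loop starting left of $Q(a_k)$ and returning left of $Q(a_k)$ can have an endpoint far from the face, which is where the no-backtracking conditions in $A_k$ and $A_j$ are essential. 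Since this lemma is quoted from \cite{LS21}*{Lemma 4.3}, I would either reproduce their argument in this geometric setting or simply cite it, checking only that our definitions of $A_j$, $B_j$ and the slabs $Q[a,b]$ match the ones used there up to the obvious relabelling $x^1 \leftrightarrow$ the coordinate along which $S$ is driven.
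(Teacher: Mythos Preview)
The paper does not supply its own proof of this lemma; it is stated as a direct quotation of \cite{LS21}*{Lemma 4.3} and no argument is given. Your last suggestion---simply cite the source after checking that the present definitions of $A_j$, $B_j$, $Q[a,b]$ and the nice cut time match those in \cite{LS21}---is precisely what the paper does.

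Your sketch is nonetheless headed in the right direction, and the key mechanism you identify is the correct one. The essential observation (which you use implicitly but could make explicit) is that on $G_j\cap H_j$ each nice cut time $u_k$ in $\Bx{k}$ is actually a \emph{global} cut time for $S[0,\tS{j+1}]$: one has $S[0,\tS{k}-1]$ strictly to the left of $Q(a_k)$, $S[\tS{k},u_k]\subset Q[a_k-q,a_k+q]$ by condition~(i), and $S[u_k+1,\tS{j+1}]$ strictly to the right of $Q(a_k)$ by condition~(iii) together with $A_{k+1},\dots,A_j$; combined with condition~(ii) this forces $S[0,u_k]\cap S[u_k+1,\tS{j+1}]=\emptyset$. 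Hence $\xi_j$ decomposes exactly at the points $S(u_k)$, and comparing each piece $\mathrm{LE}(S[u_{k-1},u_k])$ with $\lambda_{k-1},\lambda_k$ leaves discrepancies only inside the slabs $Q[a_k-q,a_k+q]$, giving \eqref{2len}. Your informal talk of ``loops straddling a face'' is a slightly looser way of packaging the same decomposition; making the global-cut-time statement precise would tighten the argument.
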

	Note that $\mathrm{len}(\xi_j')=\mathrm{len}(\xi_0)+\sum_{k=1}^j \mathrm{len}(\lambda_k)$, 
		and thus the above lemma compares the length of $\xi_j$ and $\xi_j'$. 
	
	We will next deal with the length and the hittability of each $\lambda_j$. 
	For $C\ge 1$, we define the event $E_j(C)$ by
	\begin{equation}
	\label{defE}
		E_0=E_0(C)=\{\mathrm{len}(\xi_0)\le Cm^\beta\},\ \ \ E_j=E_j(C)=\{\mathrm{len}(\lambda_j)\le Cm^\beta\}\ \ \mbox{for }j\ge 1,
	\end{equation}
	where $\xi_0$ and $\lambda_j,\ j\ge 1$ are as defined in (\ref{xidef}). 
	Let $R^z$ be a SRW on $\zzz$ started at $z\in\zzz$ and independent of $S$.  
	We denote by $P$ and $P^z$ the law of $S$ and $R^z$, respectively. 
	For $N\ge 4$ and $\eta>0$, we define the event $F_j(\eta)$ by
	\begin{equation}\label{defF}
		F_j=F_j(\eta)=\{P^{x_j}(\lambda_j\cap R^{x_j}[0,T_{R^{x_j}}(2m/5)]\neq\emptyset) \ge \eta\},
	\end{equation}
	where $T_R(r)=\inf\{k\ge 0\mathrel{:}|R(k)|\ge r\}$. 
	Note that $F_j(\eta)$ is measurable with respect to $S[\tS{j},\tS{j+1}]$. 
	\begin{figure}[htb]
		\centering
		\includegraphics[width=0.5\linewidth]{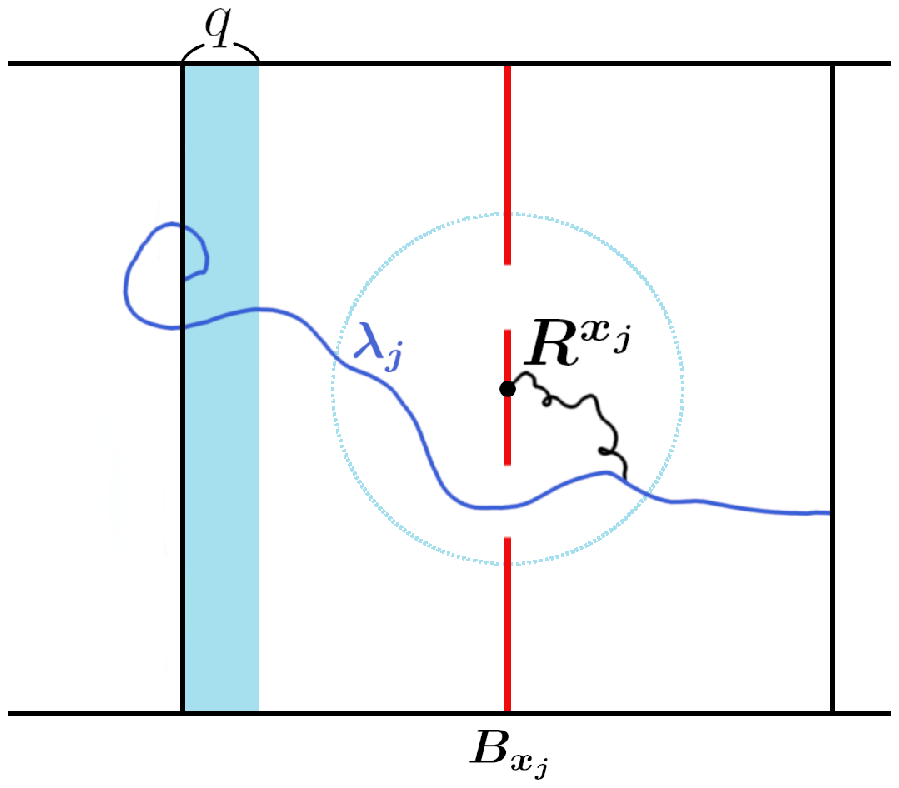}\label{figureFj}
		\caption{The event $\{\lambda_j\cap R^{x_j}[0,T_{R^{x_j}}(2m/5)]\neq\emptyset\}$}
	\end{figure}
	
	The next lemma gives a lower bound on the probability of 
		$A_j\cap B_j\cap E_j(C)\cap F_j(\eta)$ choosing $C$ sufficiently large 
		and $\eta$ sufficiently small. 
	\begin{lem}\label{c-eta}
	There exist universal constants $0<\eta_*,c_*,C_*<\infty$ such that 
	\begin{equation*}
			P(A_0\cap E_0(C_*))\ge c_*,
	\end{equation*}
	and for all $j\ge 1$,
	\begin{equation}\label{onebox}		
		\min_{x\in\tQ(a_j)}P^x(A_j\cap B_j\cap E_j(C_*)\cap F_j(\eta_*))\ge c_*N^{-2}.
	\end{equation}
	\end{lem}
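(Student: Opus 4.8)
The plan is to prove Lemma \ref{c-eta} by first handling the $j=0$ case and then focusing on the bulk estimate \eqref{onebox}, which is the substantive part. For $j=0$, the event $A_0 \cap E_0(C_*)$ is a positive-probability event depending only on $S[0,\tS{1}]$: the walk must exit the first box $\Bx{0}$ through the small face $\tQ(a_1)$ with controlled backtracking, and its loop erasure must have length at most $C_* m^\beta$. Positivity of $P(A_0)$ is a standard gambler's-ruin/confinement estimate for SRW in a box; the length bound holds with probability close to $1$ for $C_*$ large by the first-moment bound $\E[\mathrm{len}(\mathrm{LE}(S[0,\tS{1}]))] \le \E[|\mathrm{LE}(S[0,\tS{1}])|] \asymp m^\beta$ (the defining property \eqref{growthbeta} of the growth exponent $\beta$, applied to LERW in a ball of radius $\asymp m$) combined with Markov's inequality, so $P(A_0 \cap E_0(C_*)) \ge c_*$ follows by intersecting a positive-probability event with a near-full-probability event.

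For \eqref{onebox}, fix $j \ge 1$ and a starting point $x \in \tQ(a_j)$. I would decompose the target event as an intersection of four pieces and lower-bound each, conditioning appropriately. First, $P^x(A_j) \ge c N^{-2}$: conditioned on starting in $\tQ(a_j)$, the walk must traverse the slab $Q[a_j - q, a_{j+1}]$ from left to right, stay inside it, avoid the annular obstacle $R_j$, and land in $\tQ(a_{j+1})$ with no large backtracking near the exit. The factor $N^{-2}$ arises precisely because the walk must pass through the narrow channel complementary to $R_j$ — a tube of cross-sectional radius $\asymp m/N$ inside a box of size $m$ — so the relevant squeezing probability is of order $(m/N \cdot m^{-1})^{\#} \asymp N^{-2}$ after optimizing; the remaining confinement and no-backtracking constraints each cost only a constant by Beurling-type estimates and the strong Markov property. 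Second, $P^x(B_j \mid A_j) \ge c$: following \cite{LS21}, Section 4, once $A_j$ holds the walk segment in the left part of $\Bx{j}$ behaves like an unconstrained three-dimensional SRW crossing a region of diameter $\asymp q$, and such a segment contains a cut point in the prescribed window $Q[a_j + q/2, a_j + q]$ with probability bounded below by a constant, using the fact that SRW in $\Z^3$ is transient and has cut points with positive density. Third, $P^x(E_j(C_*) \mid A_j \cap B_j) \ge 1/2$ for $C_*$ large: again by the first-moment bound $\E^x[\mathrm{len}(\lambda_j)] \le \E^x[|\mathrm{LE}(S[\tS{j},\tS{j+1}])|] \le \E^x[|S[\tS{j},\tS{j+1}]|] \asymp m^2$ — wait, more carefully one uses that $\lambda_j = \mathrm{LE}(S[\tS{j},\tS{j+1}])$ is a LERW across a box of size $\asymp m$, so its expected length is $\asymp m^\beta$ by \eqref{growthbeta}, uniformly over the conditioning (which only restricts the walk to a slab of comparable size), giving the bound by Markov. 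Fourth, $P^x(F_j(\eta_*) \mid A_j \cap B_j \cap E_j) \ge c$: here $F_j$ is an event about $\lambda_j$ alone, namely that an independent SRW from $x_j$ hits $\lambda_j$ before exiting the ball of radius $2m/5$ with conditional probability at least $\eta_*$; by the hittability estimates for three-dimensional LERW (the curve $\lambda_j$ crosses a box and is ``thick'' in the sense of having good capacity), the event $\{P^{x_j}(\text{hit } \lambda_j) \ge \eta_*\}$ has probability bounded below, and one can chain these with a union bound so that on a constant-probability event all of $A_j, B_j, E_j, F_j$ hold. Multiplying the four estimates yields the claimed $c_* N^{-2}$.

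The main obstacle is making the estimate $P^x(A_j) \gtrsim N^{-2}$ both sharp and uniform in $x \in \tQ(a_j)$ while simultaneously arranging that the cut-point event $B_j$, the length bound $E_j$, and the hittability event $F_j$ all occur on a not-too-small sub-event — in particular, the conditioning in each successive step restricts the walk to an awkward region (the slab minus $R_j$, with backtracking constraints), and one must verify that the classical SRW estimates (cut-point density, Beurling escape, LERW length and capacity bounds) degrade by at most a constant factor under this conditioning. The cleanest route is the one taken in \cite{LS21}: realize the constrained walk as an unconstrained walk conditioned on a positive-probability ``good crossing'' event, transfer all the needed estimates across this conditioning by absolute continuity with bounded Radon–Nikodym derivative on the relevant $\sigma$-algebra, and only then pay the genuine $N^{-2}$ cost for the narrow-channel passage forced by $R_j$. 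I would also need to check that the definition of $N = (\log\log m)^{1/2}$ and $q = m/N^2$ make the relevant scales ($q$, $m/N$, $m$) all comparable up to the $N$ factors so that the LERW growth-exponent estimate \eqref{growthbeta} applies at scale $m$ with error absorbed into constants.
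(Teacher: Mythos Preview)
Your overall architecture is right --- decompose into $A_j$, $B_j$, $E_j$, $F_j$ and bound each --- and your treatment of $B_j$ (cut points, citing \cite{LS21}) and $E_j$ (first-moment bound on $\mathrm{len}(\lambda_j)\asymp m^\beta$ plus Markov) matches the paper. But you have interchanged the roles of two of the constraints in $A_j$, and this creates a real gap.

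First, the source of the $N^{-2}$. You attribute it to ``passing through the narrow channel complementary to $R_j$ --- a tube of cross-sectional radius $\asymp m/N$''. Look again at \eqref{R_j}: the set $R_j$ is the union of a disc of radius $m/10$ and the exterior of a disc of radius $m/8$ in the plane $\{x^1=jm\}$; its complement is a fixed annulus whose dimensions do \emph{not} involve $N$ at all. The $N^{-2}$ actually comes from the clause $S[\tS{j},\tS{j+1}]\subset Q[a_j-q,a_{j+1}]$ in \eqref{A_j}: starting at $x\in\tQ(a_j)$, the first coordinate of $S$ must reach $a_{j+1}=a_j+m$ before dropping to $a_j-q$, and since $q=m/N^2$ the gambler's-ruin probability is $q/(q+m)\asymp N^{-2}$. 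This is exactly what the paper records in \eqref{ABEF1}. The remaining geometric constraints (avoiding $R_j$, landing in $\tQ(a_{j+1})$, no backtracking near the exit) cost only constants.

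Second, and relatedly, your argument for $F_j$ is incomplete. You appeal generically to ``hittability estimates for three-dimensional LERW'' and the curve being ``thick''. But $F_j(\eta)$ asks that an independent SRW started at the \emph{centre} $x_j$ hit $\lambda_j$ before exiting $B(x_j,2m/5)$; if $\lambda_j$ lived only near the boundary of $\Bx{j}$ this could fail regardless of capacity. The paper uses $R_j$ here: on $A_j\cap B_j$, since $S[\tS{j},\tS{j+1}]$ avoids $R_j$, the portion of $S$ after the nice cut time must cross the plane $\{x^1=jm\}$ through the annulus $m/10\le r\le m/8$, forcing $\mathrm{dist}(x_j,\lambda_j)\le m/8$ (see \eqref{36prf}). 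Only then does the hittability result (\cite{SaSh18}, Theorem~3.1) yield $P^x(F_j(\eta_*)\mid A_j\cap B_j)\ge 1-c_4$. In short, $R_j$ is there to guarantee hittability from $x_j$, not to produce the $N^{-2}$; you have these two mechanisms swapped.
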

	\begin{proof}
	The first assertion is proved in \cite{LS21}*{Lemma 4.4} 
		and we also follow its proof to show that (\ref{onebox}) holds. 
	By the translation invariance, the minimum in the left hand side of (\ref{onebox}) does not 
		depend on $j$. 
	Hence, we will only consider the case $j=1$. 
	
	It follows from the gambler's ruin estimate (\cite{LaLi10}*{Proposition 5.1.6}, for example) that 
	\begin{equation}\label{ABEF1}
		c_1N^{-2}\le P^x(A_1)\le c_2N^{-2}\ \ \ \mbox{uniformly in }x\in \tQ(a_1),
	\end{equation}
	and from \cite{CutTimes}*{Corollary 5.2} that 
	\begin{equation}\label{ABEF2}
		P^x(B_1\mid A_1)\ge c_3\ \ \ \mbox{uniformly in }x\in \tQ(a_1),
	\end{equation}
	for some universal constants $0<c_1,c_2,c_3<\infty$. 

	On the event $A_1\cap B_1$, 
		let $k_1$ be a nice cut time in $\Bx{1}$ as defined in Definition $\ref{cuttimedef}$. 
	By definition, it follows that $k_1\le t_S(a_1+q)$ and 
	\begin{equation}\label{36prf}
		\mathrm{dist}\left(x_1,\mathrm{LE}(S[k_1,\tS{2}]\right) \le m/8.
	\end{equation}
	We check this by contradiction. Suppose that (\ref{36prf}) does not hold. 
	This implies that $\mathrm{LE}(S[k_1,\tS{2}])$ contains some point $z\in R_1$, 
		where $R_1$ is as defined in $(\ref{R_j})$. 
	Thus, it also holds that $z\in S[\tS{1},\tS{2}]$, which contradicts (\ref{A_j}). 
	
	It follows from (\ref{36prf}) and the decomposition 
		$\lambda_1=\mathrm{LE}(S[\tS{1},k_1])\oplus \mathrm{LE}(S[k_1,\tS{2}])$ 
	that $\mathrm{dist}(x_1,\lambda_1)\le m/8$.
	Hence, by \cite{SaSh18}*{Theorem 3.1}, 
		there exist some universal constant $0<\eta_1,c_4<1$ such that 
	\[
		P^x\left(F_1(\eta_1) \mid A_1\cap B_1\right) \ge 1-c_4\ \ \ \mbox{uniformly in }x\in \tQ(a_1).
	\]
	Combining this with (\ref{ABEF2}), we obtain 
	\begin{equation}\label{ABEF3}
		P^x(B_1\cap F_1(\eta_1)^c\mid A_1)=P^x(F_1(\eta_1)^c\mid A_1\cap B_1)P^x(B_1\mid A_1)\le c_3c_4.
	\end{equation}
	
	By the similar argument to the proof of \cite{LS21}*{Lemma 4.4}, 
		we can obtain $E^x(\mathrm{len}(\lambda_1))\le Cm^\beta$ uniformly in 
		$x\in \widetilde{Q}(a_1)$ and by the Markov's inequality, 
		there exists a universal constant $0<C_1<\infty$ such that 
	\[
		P^x(E_1(C_1)^c\mid A_1)\le c_3(1-c_4)/2,
	\]
	uniformly in $x\in\tQ(a_1)$. 
	Combining this with (\ref{ABEF1}), (\ref{ABEF2}) and (\ref{ABEF3}) yields
	\[
		P^x(A_1\cap B_1\cap E_1(C_1)\cap F_1(\eta_1))\ge \frac{c_1c_3(1-c_4)}{2}N^{-2},
	\]
	which finishes the proof. 
	\end{proof}
	


	Now we perform Wilson's algorithm around the center of each small cube. 
	Recall that $B(x,r)$ indicates the ball in the Euclidean metric 
		and $\tau_\gamma(A)$ is the first time that $\gamma$ hits $A$. 
	Given $\lambda'_i$ (see (\ref{lamdef}) for the definition), we regard it as a deterministic set and 
		consider independent simple random walks started at the points in $B(x_i, \lambda^{-2}m)$ 
		for some $\lambda\ge 1$. 
	We regard these random walks as a step of Wilson's algorithm rooted at $\xi'_i$.  

	In the following lemma, we will observe that with high conditional probability, 
		a small Euclidean ball around the center of each box $\Bx{i}$ is included in an intrinsic ball centered 
		at the same point and of radius of order $m^\beta$. 
	We define the event $M_j(\lambda)$ by 
	\begin{equation}
		M_j(\lambda)=\{B(x_j,\lambda^{-2}m)\subset B_{\ust^N}(x_j,\lambda^{-1}m^\beta\}.
	\end{equation}
	
	\begin{lem}\label{balls}
	There exist $c_4,c_5>0$ such that for all $\delta>0,m\ge 1,\lambda\in[1,m^{(1-\delta)/2})$ and $j\in\{1,\cdots,N\}$, 
	\begin{equation}\label{smball}
		\prob\left(M_j(\lambda)) \mid A_j\cap B_j\cap E_j(C_*)\cap F_j(\eta_*) \right) \ge 1-c_4\lambda^{-c_5}.
	\end{equation}
	\end{lem}

	\begin{proof}
	It suffices to show (\ref{smball}) in the case $j=1$. 
	Let $\prob_{\ust^N}(\cdot)\coloneqq\prob(\cdot\mid A_1\cap B_1\cap E_1(C_*)\cap F_1(\eta_*))$. 
	We may assume that $m$ and $\lambda$ are sufficiently large for the same reason 
		as \cite{ACHS20}*{Proposition 4.1}. 
	Thus, we take large $m$ so that 
	\begin{equation}\label{mislarge}
		\frac{m^{\delta/2}}{\delta\log m+2}\ge 10,
	\end{equation}
	for a fixed $\delta>0$. 
	
	Recall that $R^z$ indicates the SRW on $\zzz$ started at $z$ and independent of $S$. 
	Given $S$, we run $R^{x_1}$ until it hits $\xi_N$. 
	On the event $A_1\cap B_1\cap E_1(C_*)\cap F_1(\eta_*)$, 
		we have that $d_E(x_1,\xi_N)\in [m/10,m/8]$ by the definition of $A_1$ (see (\ref{A_j})) and 
		the event $\left\{\mathrm{LE}(R^{x_1}[0,\tau_{R^{x_1}}(\xi_N)])\subset\Bx{1}\right\}$ occurs with positive conditional probability by the definition of $F_1$ and $\eta_*$ (see (\ref{defF}) and (\ref{onebox})). 
	By \cite{M09}*{Corollary 4.5}, we have that for $\lambda\ge 40$ the law of $\mathrm{LE}(R^{x_1}[0,\tau_{R^{x_1}}(\xi_N)])$ restricted to $B(x_1,\lambda^{-1}m)$ is 
		comparable to that of the infinite LERW started at $x_1$ restricted to the same ball. 
	Thus, we can follow the discussion of \cite{ACHS20}*{Proposition 4.1}, 
		which gives a tail bound estimate of the volume of intrinsic balls in the three-dimensional UST. 

	Let $\sigma$ and $\widetilde{\sigma}$ be the first time that 
		$\gamma_{x_1}\coloneqq \mathrm{LE}(R^{x_1}[0,\tau_{R^{x_1}}(\xi_N)])$ exits 
		$B(x_i,\lambda^{-2}m)$ and $\ B(x_i,\lambda^{-1}m)$, respectively. 
	We define the event $F$ by 
	\[
		F=\left\{\gamma_{x_1}[\widetilde{\sigma},\mathrm{len}(\gamma_{x_1})]\cap B(x_1,2\lambda^{-2}m)=\emptyset,\ \sigma\le \frac{1}{2}\lambda^{-1}m^\beta\right\}.
	\]
	Then by \cite{L96}*{Proposition 1.5.10}, the probability that a SRW started at a point 
		outside $\ B(x_1,\lambda^{-1}m)$ returns to $B(x_1,\lambda^{-2}m)$ is 
		smaller than $C\lambda^{-1}$ for some universal constant $C<\infty$. 
	This implies $\prob_{\ust^N}(\gamma_{x_1}[\widetilde{\sigma},\mathrm{len}(\gamma_{x_1})]\cap B(x_1,2\lambda^{-2}m)\neq\emptyset)\le C\lambda^{-1}$. 
	On the other hand, by \cite{S18}*{Theorem 1.4} and \cite{LS19}*{Corollary 1.3}, 
		we have that the probability that 
		$\sigma$ is greater than $\frac{1}{2}\lambda^{-1}m^\beta$ is bounded above 
		by $C\exp\{-c\lambda^{-1}\}$ for some universal constants $0<C,c<\infty$. 
	Thus, it follows from the above estimates that 
	\begin{equation}\label{ball_F}
		\prob_{\ust^N}(F)\ge 1-C\lambda^{-1}.
	\end{equation}
	
	Next we observe that $\gamma_{x_1}$ can be hit by another independent SRW 
		started at a point which is close to $\gamma_{x_1}$ with high probability. 
	For $\zeta>0$, we define an event $G(\zeta)$ by 
		\[
			G(\zeta)=\left\{\forall y\in B(x_1,2\lambda^{-2}m),\ P_R^y(R[0,T_{R^y}(x_1,\lambda^{-3/2}m)]\cap \gamma_{x_1}=\emptyset)\le \lambda^{-\zeta}\right\},
		\]
		where $T_{R^y}(x,l)$ is the first time that $R^y$ exists $B(x,l)$. 
	From \cite{SaSh18}*{Theorem 3.1}, there exist universal constants $C<\infty$ 
		and $\zeta_1\in(0,1)$ such that for all $m\ge 1$ and $\lambda\ge 2$, 
	\begin{equation}\label{zeta_1}
		\prob(G(\zeta_1))\ge 1-C\lambda^{-1}.
	\end{equation}

	Then we take a sequence of subsets of $\zzz$ including the boundary of $B(x_1,\lambda^{-1}m)$. 
	For each $k\ge 1$, let 
		$\vep_k=\lambda^{-\zeta_1/6}2^{-k-10},\ \eta_k=(2k)^{-1}$ and
	\[
		A_k=B(x_1,(1+\eta_k)\lambda^{-2}m)\setminus B(x_1,(1-\eta_k)\lambda^{-2}m).
	\]
	Write $k_0$ for the smallest integer satisfying $\lambda^{-2}m\vep_{k_0}<1$. 
	Note that the condition (\ref{mislarge}) guarantees that both the inner and outer boundary 
		of $B(x_1,\lambda^{-2}m)$ are contained in $A_{k_0}$. 
	Moreover, let $D_k$ be a set of lattice points in $A_k$ 
		such that $A_k\subset \bigcup_{z\in D_k} B(z,\lambda^{-2}m\vep_k)$. 
	We may suppose that $|D_k|\le C\vep_k^{-3}$. 
	Since $\lambda^{-2}m\vep_{k_0}<1$ and 
			$\partial_i B(x_1,\lambda^{-2}m)\subset A_{k_0}$, it follows that 
			$\partial_i B(x_1,\lambda^{-2}m)\subset D_{k_0}$. 

	Now we perform Wilson's algorithm to prove (\ref{smball}). 
	Let $\ust^N_0\coloneqq \xi_N\cup \gamma_{x_1}$. 
	\begin{enumerate}[(i)]
		\item Consider an independent SRW started at a point in $D_1$ and 
					run until it hits $\ust^N_0$. 
				We add its loop-erasure to $\ust^N_0$ and denote the union by $\ust^N_{1,1}$. 
				Given $\ust^N_{1,j}$, we consider an independent SRW from another point 
					in $D_1\setminus \ust^N_{1,j}$ and let $\ust^N_{1,j+1}$ 
					be the union of $\ust^N_{1,j}$ and the loop-erasure of the new SRW. 
				We continue this procedure until all points in $D_1$ are contained in the tree, 
					which we denote by $\ust^N_1$. 
		\item We repeat the above procedure for $D_2$ taking $\ust^N_1$ as a root. 
				Let $\ust^N_2$ be the output tree. 
				We continue inductively to construct $\ust^N_3,\ust^N_4,\cdots \ust^N_{k_0}$. 
		\item Once we obtain $\ust^N_{k_0}$, we perform Wilson's algorithm for all points in $B(x_1,\lambda^{-2}m)$. 
		\item We repeat the same procedure as (i), (ii) and (iii) 
					for all $x_2,x_3,\cdots x_N$. 
		\item Finally, we perform Wilson's algorithm for all points in $\bigcup_{j=0}^N \Bx{j}$ to obtain $\ust^N$. 
	\end{enumerate}
	By construction, it is clear that $\ust_k^N\subset \ust_{k+1}^N$, and 
		also $\partial_i B(x_1,\lambda^{-2}r)\subset \ust_{k_0}^N$. 

	By the definition of $G(\zeta_1)$, we have that 
	\begin{equation}\label{probH1}
		\prob(\gamma(y,\ust^N_0)\not\subset B(x_1,\lambda^{-3/2}m)\mid F\cap G(\zeta_1))\le \lambda^{-\zeta_1}.
	\end{equation}
	On the other hand, by stopping conditioning $\gamma_{x_1}$ on $F\cap G(\zeta_1)$, 
		it follows from \cite{S18}*{Theorem 1.4} and \cite{LS19}*{Corollary 1.3} that 
		there exist some universal constant $C,\ c,\ c'>0$ such that 
	\begin{align}
		\prob&\left(\gamma_{\ust^N}(y,\ust^N_0)\subset B(x_1,\lambda^{-1}m),\ d_{\ust^N}(y,\ust^N_0)\ge \frac{1}{2}\lambda^{-1} m^\beta\right)	\notag	\\
		&\le\frac{\prob(\gamma_{\ust^N}(y,\ust^N_0)\subset B(x_1,\lambda^{-1}m),\ d_{\ust^N}(y,\ust^N_0)\ge \frac{1}{2}\lambda^{-1}m^\beta)}{\prob(F\cap G(\zeta_1))}	\notag	\\
		&\le C\exp\{-c\lambda^{c'}\}.	\label{probH2}
	\end{align}
	Combining (\ref{probH1}) and (\ref{probH2}), we have that 
	\[
		\prob(\gamma(y,\ust^N_0)\subset B(x_1,\lambda^{-1}m),\ d_{\ust}(y,\ust^N_0)\le \frac{1}{2}\lambda^{-1}m^\beta)\ge 1-C\lambda^{-\zeta_1}.
	\]
	Let $H$ be the event defined by 
	\[
		H=\left\{\gamma(y,\ust^N_0)\subset B(x_1,\lambda^{-1}m),\ d_{\ust}(y,\ust^N_0)\le \frac{1}{2}\lambda^{-1}m^\beta\mbox{ for all }y\in D_1 \right\}.
	\]
	Then we have 
	\[
		\prob(H)\ge 1-C\lambda^{-\zeta_1/2},
	\]
	since $|D_1|\le C\lambda^{\zeta_1/2}$. 

	Next, we will consider several events that ensure hittability of branches 
		in the subtree. 
	For $k\ge 1$ and $\zeta>0$, we define the event $I(k,x,\zeta)$ by 
	\begin{align}\label{defhit}
		I&(k,x,\zeta)	\notag	\\
		&=\left\{P_R^y\left(R\left[0,T_{R^y}(y,\lambda^{-2}m\vep_k^{1/2})\right]\cap (\ust^N_0\cup \gamma(x,\ust^N_0))\right)\le \vep_k^\zeta \mbox{ for all }y\in B(x,\lambda^{-2}m\vep_k)\right\},
	\end{align}
	Let $I(k,\zeta)=\bigcap_{x\in D_k}I(k,x,\zeta)$. 
	Applying \cite{SaSh18}*{Lemma 3.2}, it follows that there exist universal constants 
		$\zeta_2>0$ and $C<\infty$ such that for all $k\ge 1, m\ge 1, \lambda\ge 2$ and 
		$x\in D_k$,
	\[
		\prob_{\ust^N}(I(k,x,\zeta_2)^c)\le C\vep_k^5.
	\]
	Combining this with $|D_k|\le C\vep_k^{-3}$ yields that 
	\[
		\prob(I(k,\zeta_2)^c)\le C\vep_k^2\le C\lambda^{-\zeta_1/3}.
	\]
	
	We set $A'_1\coloneqq F\cap G(\zeta_1)\cap H\cap I(1,\zeta_2)$. 
	Note that $A'_1$ is measurable with respect to $\ust^N_1$, 
		the subtree obtained after the first step (i) of our Wilson's algorithm. 
	We have already seen that $\prob_{\ust^N}(A'_1)\ge 1-C\lambda^{-\zeta_1/3}$. 

	Conditioning $\ust^N_1$ on the event $A'_1$, we proceed with Wilson's algorithm 
		for the points in $D_2$. 
	We take $y\in D_2$ and consider the SRW $R^y$ started at $y$ until it hits $\ust^N_1$.  
	By the definition of $D_1$, there exists $x'\in D_1$
		 such that $d_E(x,y)\le \lambda^{-2}m\vep_1$. 
	Suppose that $R^y$ exits $B(y,\lambda^{-2}m\vep_1^{1/3})$ before it hits $\ust^N_1$. 
	Then the event that $R^y$ exits $B(x,\lambda^{-2}m\vep_1)$ before it hits 
		$\ust^N_1$ occurs. 
	However by (\ref{defhit}) and the definition of $\zeta_2$, 
		the probability that the event occurs conditioned on $A'_1$ is lower than 
		$\vep_1^{\zeta_2}$. 
	By iteration, the number of balls of radius $\lambda^{-2}m\vep_1^{1/2}$ that $R^y$ 
		exits before hitting $\ust^N_1$ is larger than $\vep_1^{-1/6}$. 
	Hence, we have that 
	\[
		P^y(R^y\mbox{ exits }B(y,\lambda^{-2}m\vep_1^{1/3})\mbox{ before it hits }\ust^N_1)\le \vep_1^{c\zeta_2\vep_1^{-1/6}},
	\] 
	for some universal constant $c>0$. 
	Moreover, following the same argument as (\ref{probH2}), we have that 
	\[
		P^y\left(\gamma(y,\ust^N_1)\not\subset B(y,\lambda^{-2}m\vep_1^{1/3})\mbox{ and }d_{\ust}(y,\ust^N_1)\ge (\lambda^{-2}m)^\beta\vep_1^{1/4}\right) \le C\exp\left\{-c\vep_1^{-1/12}\right\}.
	\]
	With this in mind, we define the event $B_2$ by 
	\[
		B_2=\left\{\gamma(y,\ust^N_1)\subset B(y,\lambda^{-2}m\vep_1^{1/3})\mbox{ and }d_{\ust}(y,\ust^N_1)\le \lambda^{-1}m^\beta\vep_1^{1/4},\mbox{ for all }y\in D_2\right\}.
	\]
	Since $|D_2|\le C\vep_2^{-3}$, we have that 
	\[
		\prob_{\ust^N}(B_2\mid A'_1)\ge 1-C\vep_1^{-3}\exp\left\{-c\vep_1^{-1/12}\right\}.
	\]
	Hence, letting $A'_2\coloneqq A_1\cap B_2\cap I(2,\zeta_2)$, it follows that 
	\[
		\prob_{\ust^N}(A'_2\mid A'_1) \ge 1-C\vep_2^2.
	\]
	Following the above argument, we define the sequences of events $\{A'_k\},\{B_k\}(k=2,3,\cdots,k_0)$ by 
	\begin{align*}
		B_k&=\left\{\gamma(y,\ust^N_{k-1})\subset B(y,\lambda^{-2}m\vep_{k-1}^{1/3})\mbox{ and }d_{\ust}(y,\ust^N_{k-1})\le \lambda^{-1}m^\beta\vep_{k-1}^{1/4},\mbox{ for all }y\in D_2\right\},	\\
		A'_k&=A'_{k-1}\cap B_k\cap I(k,\zeta_2).
	\end{align*}
	Then we can conclude that 
	\begin{equation}\label{Ak0bound}
		\prob_{\ust^N}(A'_{k_0})=\prob_{\ust^N}(A'_1)\prod_{k=2}^{k_0}\prob_{\ust^N}(A'_k\mid A'_{k-1})\ge (1-C\lambda^{-\zeta_1/3})\prod_{k=1}^\infty(1-C\vep_k^2)\ge 1-C\lambda^{-\zeta_1/3}.
	\end{equation}
	On the other hand, on the event $A'_{k_0}$, there exists some universal constant 
		$C>0$ such that
	\begin{enumerate}[(1)]
	\setlength{\leftskip}{2em}
		\item $d_{\ust}(x_1,y)\le \lambda^{-1}m^\beta$ for all $y\in\left(\ust^N_0\cap B(x_1,\lambda^{-2}m)\right)\cup\left(\bigcup_{y\in D_1}\gamma(y,\ust^N_0)\right)$,
		\item $d_{\ust}(x_1,y) \le C\lambda^{-1}m^\beta$ for all $y\in (\ust^N_0\cap B(x_1,\lambda^{-2}m))\cup\ust^N_{k_0}$,
	\end{enumerate}
	It immediately follows that (1) holds from the definition of $F$ and $H$. 
	For $y\in\ust_{k_0}$, let $y_k~(k=1,2,\cdots k_0-1)$ be the first point in $\ust^N_{k}$ 
		that appears on $\gamma_{\ust^N}(y,\ust^N_0)$ (we set $y_k=y$ if $y\in\ust^N_k$). 
	On the event $A'_{k_0}$, we have that 
	\begin{align*}
		d_\ust(x_1,y)&\le d_\ust(x_1,y_1)+\sum_{k=1}^{k_0-1}d_\ust(y_k,y_{k+1})	\\
				&\le \lambda^{-1}m^\beta+\sum_{k=1}^\infty \lambda^{-1}m^\beta\vep_{k-1}^{1/4} \le C\lambda^{-1}m^\beta,
	\end{align*}
	which implies (2). 

	Once we see that (2) holds on the event $A'_{k_0}$, 
		we need to estimate the $d_\ust$ distance between
		an arbitrary point in $B(x_1,\lambda^{-2}m)$ and $\partial_i B(x_1,\lambda^{-2}m)$. 
	In order to do so, we take another ``net'':
		 we let $\vep'_k=\lambda^{-\zeta_1/4}2^{-k-10}$ and 
		$D'_k$ be a set of lattice points in $B(x_1,\lambda^{-2}m)$ such that 
			$B(x_1,\lambda^{-2}m)\subset \bigcup_{z\in D'_k}B(z,\lambda^{-2}m\vep'_k)$. 
	We may suppose that $|D'_k|\le C(\vep'_k)^{-3}$. 
	By the similar argument to the estimate of $\prob_{\ust^N}(A'_{k_0})$, we obtain 
	\begin{equation}\label{filll}
		\prob_{\ust^N}\left(d_\ust(y,\partial_iB(x_1,\lambda^{-2}m))\ge C\lambda^{-1}m^\beta\mbox{ for some }y\in B(x_1,\lambda^{-2}m)\right)\le C\lambda^{-\zeta_1/2}.
	\end{equation}

	For the lower bound of volume (\ref{smball}), we now estimate the distance between $x_1$
		and all points in $B(x_1,\lambda^{-2}m)$. 
	Since $\ust^N_{k_0}$ contains $\partial_i B(x_1,\lambda^{-2}m^\beta)$, 
		it follows from the same argument as (\ref{probH2}) again that 
		for any $y\in B(x,\lambda^{-2}m)$,  
	\begin{align*}
		\prob_{\ust^N}&\left(d_{\ust}(x_1,y) \le C\lambda^{-1}m^\beta \mbox{ for all }y\in B(x_1,\lambda^{-2}m)\right)	\\
		&\ge \prob_{\ust^N}(A'_{k_0})-\prob_{\ust^N}\left(A'_{k_0}\cap\left\{d_{\ust}(x_1,y)>C\lambda^{-1}m^\beta\mbox{ for some }y\in B(x_1,\lambda^{-2}m)\right\}\right)	\\
		&\ge \prob_{\ust^N}(A'_{k_0})-\prob_{\ust^N}\left(d_{\ust}(y,\partial_iB(x_1,\lambda^{-2}m))>C\lambda^{-1}m^\beta\mbox{ for some }y\in B(x_1,\lambda^{-2}m)\right)	\\
		&\ge 1-C\lambda^{-c},
	\end{align*}
	for some universal constant $c>0$, which completes the proof of (\ref{smball}). 
	\end{proof}
	
	It follows from (\ref{onebox}) and (\ref{smball}) that there exists some 
	universal constant $\lambda_*\ge 1$ such that 
	\begin{equation}\label{abefm_j}
		P(A_j\cap B_j\cap E_j(C_*)\cap F_j(\eta_*)\cap M_j(\lambda_*))=c_*^2N^{-2},
	\end{equation}
	for $j=1,2,\cdots N$. 
	We have obtained a lower bound of the probability that the volume of the random tree 
		constructed by Wilson's algorithm in each $\Bx{j}$ is of the order of $m^\beta$. 
	
	Recall that the events $G_j$ and $H_j$ are defined by (\ref{G_j}) and (\ref{H_j}) 
		respectively. 
	We define an event $I_j$ by $I_j=\bigcap_{k=1}^j M_k(\lambda_*)$.  
	Let $J_j=\bigcap_{k=1}^j E_k(C_*)$ and $K_j=\bigcap_{k=1}^j F_k(\eta_*)$ 
			for $C_*$ and $\eta_*$ defined in Lemma \ref{c-eta}, where $E_k(C)$ and $F_k(\eta)$ are as defined in 
			(\ref{defE}) and (\ref{defF}), respectively. 
	Recall that $T_S(r)=\inf\{k\ge 0\mathrel{:}|S(k)|\ge r\}$. 
	
	By (\ref{2len}), in order to estimate $\mathrm{len}(\xi_j)$ 
		on the event $G_j\cap H_j\cap I_j\cap J_j\cap K_j$, 
	we need to give an upper bound on $|\xi_j\cap Q[a_k-q,a_k+q]|$ for $k=1,\cdots,j$ and 
		for $q$ defined in (\ref{defofq}). 
	Take 
	\begin{equation}\label{defR(N)}
		w=(w^1,w^2,w^3),\ \ \ R(N)=\exp\{2e^{RN^2}+1\},
	\end{equation}
	and define 
	\begin{align*}
		Q_w&=\{y=(y^1,y^2,y^3)\mathrel{:}|y^1-w^1|\le q,\ |y^i-w^i|\le m/2\ \mbox{for }i=2,3\},	\\
		N_w&=\left|Q_w\cap \mathrm{LE}(S[0,T_S(R(N)m)])\right|.
	\end{align*}	
	
	\begin{figure}[htb]
		\centering
		\includegraphics[width=0.56\linewidth]{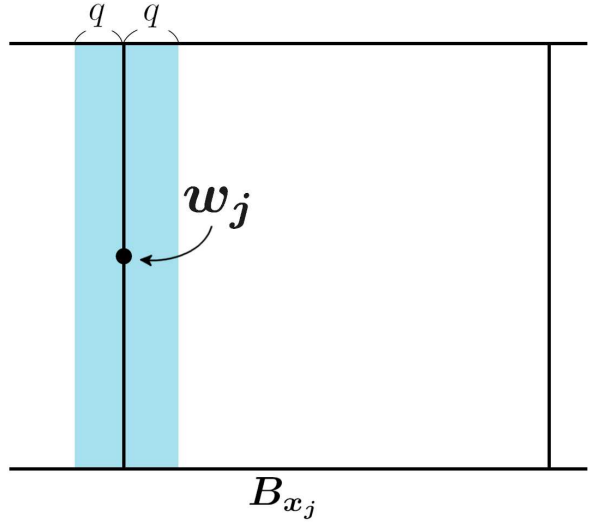}\label{figureQw}
		\caption{$Q_w$ for $w=w_j=((j-\frac{1}{2})m,0,0)$}
	\end{figure}
	
	Then, it follows from \cite{LS21}*{Lemma 4.5} that there exist universal constants 
		$0<c,C<\infty$ such that 
	\begin{equation}\label{N_w}
		P(N_w\ge m^\beta)\le C\exp\{-cN^2\}\ \ \mbox{uniformly in }w\in B(0,R(N)m).
	\end{equation}
	
	We define
	\begin{align*}
		w_j=&(a_j,0,0),\ \ \ L_j=\left\{|Q_{w_k}\cap \xi_j|\le m^\beta\ \mbox{for all }1\le k\le j/2\right\}\ \ \mbox{for }j\ge 1,	\\
		U_{2N}=&\left\{S(T_S(R(N)))\in\{(y^1,y^2,y^3)\in\rrr\mathrel{:} y^1\ge \frac{4}{5}R(N)m\},\right. 	\\
		&\ \ \left.S[\tS{2N+1},T_S(R(N))]\cap B(0,a_{\frac{7}{4}N})=\emptyset\right\}.
	\end{align*}
	and set 
	\begin{equation}\label{defofAN}
		A^N=G_{2N}\cap H_{2N}\cap I_{2N}\cap J_{2N}\cap K_{2N}\cap L_{2N}\cap U_{2N}.
	\end{equation}
	
	\begin{figure}[htb]
		\centering
		\includegraphics[width=0.8\linewidth]{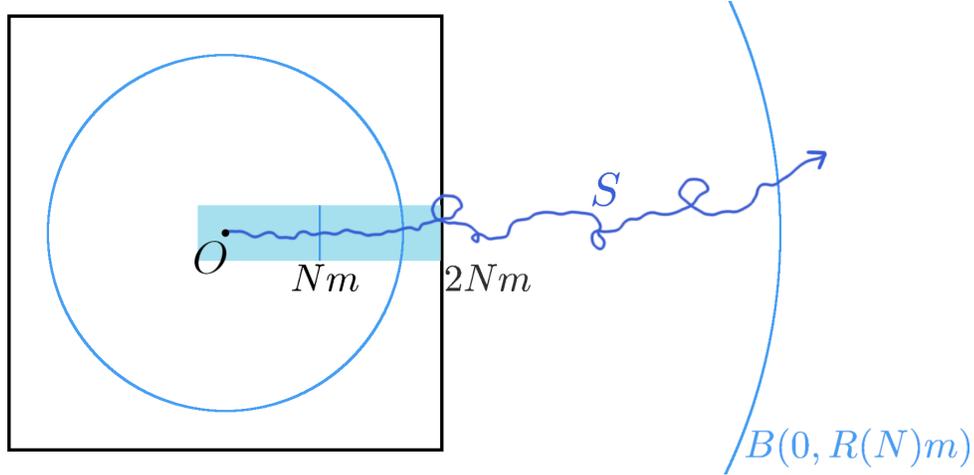}\label{eventU2N}
		\caption{Definition of the event $U_{2N}$}
	\end{figure}
	
	We estimate the lower bound of the probability of the event $A^N$. 
	
	\begin{lem}\label{x1axis}
	There exists a universal constant $c_3>0$ such that 
	\begin{equation}
			P(A^N)\ge c_3^{-1}\exp \{-c_3N(\log N)\}
	\end{equation}
	\end{lem}
	
	\begin{proof}	
		To prove this, we will make use of the strong Markov property of $S$ as follows. 
		First, by the strong Markov property
		\begin{align*}
			P(G_{2N}&\cap H_{2N}\cap I_{2N}\cap J_{2N}\cap K_{2N})	\\	
				&=P((A_{2N}\cap B_{2N}\cap E_{2N}\cap F_{2N}\cap M_{2N})\cap (G_{2N-1}\cap H_{2N-1}\cap I_{2N-1}\cap J_{2N-1}\cap K_{2N-1}))	\\
				&=P^{S(t_S(a_{2N}))}(A_{2N}\cap B_{2N}\cap E_{2N}\cap F_{2N}\cap M_{2N})P(G_{2N-1}\cap H_{2N-1}\cap J_{2N-1}\cap K_{2N-1}).
		\end{align*}
		Then by (\ref{abefm_j}), we have that 
		\[
			P(G_{2N}\cap H_{2N}\cap I_{2N}\cap J_{2N}\cap K_{2N})\ge c_*N^{-2}P(G_{2N-1}\cap H_{2N-1}\cap I_{2N-1}\cap J_{2N-1}\cap K_{2N-1}),
		\]
		and by iteration, it follows that there exists some universal constant $c>0$ such that 
		\begin{equation}\label{ghjk2N}
			P(G_{2N}\cap H_{2N}\cap I_{2N}\cap J_{2N}\cap K_{2N})\ge (cN^{-2})^{2N}.
		\end{equation}
		Secondly, again by the strong Markov property 
		\begin{align*}
			P(G_{2N}&\cap H_{2N}\cap I_{2N}\cap J_{2N}\cap K_{2N}\cap U_{2N})	\\		
				=&P(U_{2N}\mid G_{2N}\cap H_{2N}\cap I_{2N}\cap J_{2N}\cap K_{2N})P(G_{2N}\cap H_{2N}\cap J_{2N}\cap K_{2N})	\\
				=&P^{S(t_S(a^{2N+1}))}\left(U_{2N}\right)P(G_{2N}\cap H_{2N}\cap I_{2N}\cap J_{2N}\cap K_{2N}).
		\end{align*}
		Then by \cite{L96}*{Proposition 1.5.10}, $P^{S(t_S(a^{2N+1}))}\left(U_{2N}\right)$ is 
			bounded below by some universal constant $c>0$. 
		Combining this with (\ref{ghjk2N}), we obtain   
		\begin{equation}\label{G-U}
			P(G_{2N}\cap H_{2N}\cap I_{2N}\cap J_{2N}\cap K_{2N}\cap U_{2N}) \ge c\exp\{-cN(\log N)\}.
		\end{equation}
		Furthermore, following the proof of \cite{LS21}*{Proposition 4.6}, 
			we obtain that 
		\[
			P(G_{2N}\cap H_{2N}\cap I_{2N}\cap J_{2N}\cap K_{2N}\cap U_{2N}\cap (L_{2N})^c)\le CN\exp\{-cN^2\},
		\]
		where we use (\ref{N_w}) instead. 
		Combining this with (\ref{G-U}), we obtain 
		\[
			P(G_{2N}\cap H_{2N}\cap I_{2N}\cap J_{2N}\cap K_{2N}\cap L_{2N}\cap U_{2N})\ge c\exp\{-cN(\log N)\},
		\]
		which completes the proof. 	
	\end{proof}



\subsection{Proof of Proposition \ref{VLower}}
	In the previous subsection, we observed the behavior of $\ust$ along the LERW started at 
		the origin, \ie the first step of Wilson's algorithm. 
	Now we consider several events to complete a lower bound estimate of the upper tail of the 
		volume. 
	
	We take $y_j=(jm,Nm,0)\in\zzz,\ j=1,\cdots,N,$ and run a simple random walk $R^{y_j}$ 
		started at $y_j$ until it hits $\ust^N$. 
	Let $B_{y}(k)$ be the cube of side-length $m$ centered at $y_{j,k}=(jm,km,0)\in\zzz$. 
	We define $\tau_{y_j}$ (resp. $\sigma_{y_j}$) to be the first time that $R^{y_j}$ hits 
		$B_{y}(0)=\Bx{j}$ (resp. $\ust^N$).
 
	\begin{defi}\label{parallel}
		Define $V_{y_j}$ to be the intersection of the following events of $R^{y_j}$: 
		\begin{itemize}
			\item $\{\mathrm{LE}(R^{y_j}[0,\tau_{y_j}])\subset \bigcup_{k=1}^{N-1} B_{y}(k)\}$, 
			\item $\{\mathrm{len}(\mathrm{LE}(R^{y_j}[0,\tau_{y_j}]))\le (N-1)m^\beta\}$,
			\item $\displaystyle \bigcap_{k=1}^{N-1} \left\{\forall z\in B(y_{j,k},2\lambda^{-2}m),\ P_{\widetilde{R}}^z([0,T_{\widetilde{R}^z}(y_{j,k},\lambda^{-3/2}m)]\cap \mathrm{LE}(R^{y_j}[0,\tau_{y_j}] =\emptyset)\le \lambda^{-\zeta_1}\right\}$, where $\widetilde{R}$ is a simple random walk independent of $R^{y_j}$ and $\zeta_1$ is as defined in (\ref{zeta_1}),
			\item $\bigcap_{k=1}^{N-1} \{B(y_{j,k},\lambda^{-2}m)\subset B_{\ust}(y_{j,k},\lambda^{-1}m^\beta)\}$,
		\end{itemize}
		and define $W_{y_j}$ by 
		\begin{equation*}
			W_{y_j}\coloneqq \left\{\mathrm{len(LE}(R^{y_j}[\tau_{y_j},\sigma_{y_j}]))\ge cN(\log N)^{100}m^\beta\right\}
		\end{equation*}
	\end{defi}
	\begin{figure}[htb]
		\centering
		\includegraphics[width=0.8\linewidth]{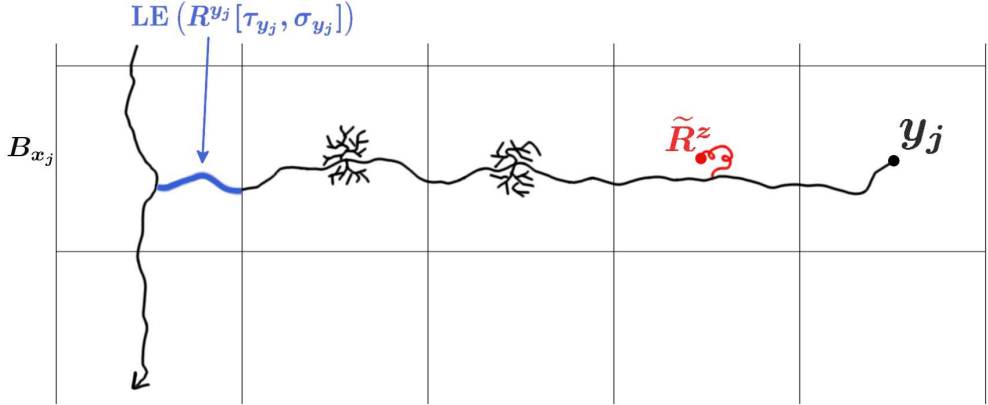}\label{VWy_j}
		\caption{The sets we consider in the event $V_{y_j}\cap W_{y_j}$}
	\end{figure}

	\begin{lem}\label{yz}
		For each $j=1,2,\cdots N$,
		\begin{equation}\label{probvwy}
			P^{y_j}(V_{y_j}\cap W_{y_j}\mid A^N)\ge c\exp\{-CN(\log N)\}.
		\end{equation}
	\end{lem}
	
	\begin{remark}
		Since tail bounds for the length of three-dimensional LERW in a general set have not been 
			obtained, we will apply the tail bound for the length of infinite LERW in a Euclidean ball 
			given in \cite{S18}*{Theorem 1.4} and \cite{LS19}*{Corollary 1.3}, 
			instead of regarding $\gamfty$ as a deterministic set. 
		Thus, in order to estimate the conditional probability of $W_{y_j}$ on the event $A^N$ 
			from below, we consider the length $cN(\log N)^{100}m^\beta$ 
			in the right-hand side of the definition of $W_{y_j}$, so that $P^{y_j}(W_{y_j}^c)$ 
			becomes enough compared to $\prob(A^N)$. 
	\end{remark}
	
	\begin{proof}
		First, applying the same argument as Lemma \ref{balls} and Lemma \ref{x1axis}, 
			there exists a universal constant $c>0$ such that 
			$P^{y_j}(V_{y_j}\mid A^N)\ge c\exp\{-cN\log N\}$. 
		Next we will estimate the upper bound of $P^{y_j}(A^N\cap V_{y_j}\cap W_{y_j}^c)$. 
		In order to do so, we stop conditioning on $A^N$ and consider 
			$\mathrm{LE}(R^{y_j}[\tau_{y_j},\sigma_{y_j}])$ as a part of infinite LERW. 
		By \cite{S18}*{Theorem 1.4} and \cite{LS19}*{Corollary 1.3}, we have that 
		\begin{align*}
			P^{y_j}(A^N\cap V_{y_j}\cap W_{y_j}^c)&\le P\left(\mathrm{len(LE}(R^{y_j}[\tau_{y_j},\sigma_{y_j}]))\ge cN(\log N)^{100}m^\beta\right)	\\
				&\le \exp\{-CN(\log N)^{100}\},
		\end{align*}
		from which it follows that 
		\[
			P^{y_j}(A^N\cap V_{y_j}\cap W_{y_j})=P^{y_j}(A^N\cap V_{y_j})-P^{y_j}(A^N\cap V_{y_j}\cap W_{y_j}^c)\ge \frac{1}{2}P^{y_j}(A^N\cap V_{y_j}),
		\]
		since $P(F_0)\ge c\exp\{-CN\log N\}$ by Lemma \ref{x1axis} and Lemma \ref{balls}. 
		Thus, we have
		\begin{align*}
			P^{y_j}(V_{y_j}\cap W_{y_j}\mid A^N)&\ge \frac{1}{2}P^{y_j}(V_{y_j}\mid A^N)	\\
				&\ge c\exp\{-cN(\log N)\},
		\end{align*}
		which completes the proof.
	\end{proof}
	Finally we take $z_{j,k}=(jm,km,Nm)\in\zzz,\ j,k=1,\cdots,N,$ and run a simple random walk 
		$R^{z_{j,k}}$ started at $z_{j,k}$ until it hits $\ust^N$. 
	Let $B_{z}(l)$ be the cube of side-length $m$ centered at $z_{j,k,l}=(jm,km,lm)\in\zzz$. 
	We define $\tau_{z_{j,k}}$ (resp. $\sigma_{z_{j,k}}$) to be the first time that 
		$R^{z_{j,k}}$ hits $B_{z}(0)=B_{y}(k)$ (resp. already constructed subtree of $\ust$). 
	Let $V_{z_{j,k}}$ (resp. $W_{z_{j,k}}$) be an event of $R^{z_{j,k}}$ which corresponds 
		to $V_{y_j}$ (resp. $W_{y_j}$) with $x_2$ axis replaced by $x_3$ axis 
		(see Definition \ref{parallel} for the definition of $V_{y_j}$ and $W_{y_j}$). 
	By applying the same argument as Lemma \ref{yz}, we have that 
	\begin{equation}\label{probvwzjk}
		P^{z_{j,k}}(V_{z_{j,k}}\cap W_{z_{j,k}}\mid A^N\cap V_{y_j}\cap W_{y_j})\ge c\exp\{-CN(\log N)\}.
	\end{equation}
	Note that $V_{z_{j,k}}\cap W_{z_{j,k}}$ is independent of 
		$V_{y_{j'}}\cap W_{y_{j'}}$ if $j\neq j'$.

	\begin{cor}\label{corvvv}
	There exist universal constants $c,c',C,C'$ such that for all $m\ge 1$ and $N\ge 1$, 
	\begin{equation}\label{vvv}
		\prob\left(|B_\ust(0,C'Nm^\beta)|\ge c'(Nm)^3\right) \ge c\exp\{-CN^3(\log N)\}.
	\end{equation}
	\end{cor}
	
	\begin{proof}
	Recall that $x_j=(jm,0,0)$. 
	On the event $A^N$, we have that for all $y\in B(x_j,\lambda_*^{-2}m)$, 
	\begin{align*}
		d_\ust(0,y)&\le d_\ust(0,x_j)+d_\ust(x_j,y)	\\
				&\le CNm^\beta+\lambda_*^{-1}m^\beta	\\
				&\le C'Nm^\beta
	\end{align*}
	Since each step of Wilson's algorithm is mutually independent, applying the result of 
		Lemma \ref{yz} to the ``tubes'' parallel to $x_2$ axis, we obtain 
	\begin{align*}
		\prob&\left(\bigcap_{k=0}^N \bigcap_{j=1}^N \left\{d_\ust(0,y)\le CNm^\beta\mbox{ for all }y\in B\left((jm,km,0),\lambda_*^{-2}m\right)\right\}\right)	\\
			&\ge \prob(A^N)\prod_{j=1}^{N}P^{y_j}(V_{y_j}\cap W_{y_j}\mid A^N)	\\
			&\ge c\exp\{-CN^2(\log N)\}.
	\end{align*}
	Next we consider the ``tubes'' parallel to $x_3$ and we have 
	\begin{align*}
		\prob&\left(\bigcap_{l=0}^N\bigcap_{k=0}^N \bigcap_{j=1}^N \left\{d_\ust(0,y)\le CNm^\beta\mbox{ for all }y\in B\left((jm,km,lm),\lambda_*^{-2}m\right)\right\}\right)	\\
			&\ge c\exp\{-CN^2(\log N)\}\prod_{k=1}^N \prod_{j=1}^N P^{z_{j,k}}\left(V_{z_{j,k}}\cap W_{z_{j,k}}\mid A^N\cap V_{y_j}\cap W_{y_j}\right)	\\
			&\ge c\exp\{-CN^3(\log N)\},
	\end{align*}
	where we applied (\ref{probvwzjk}) in the last inequality. 
	Finally, comparing the left-hand side of the above inequality and (\ref{vvv}), we obtain (\ref{vvv}). 
	\end{proof}

	\begin{proof}[Proof of Propositon \ref{VLower}]
		Setting $r=C'Nm^\beta$ and $\lambda=c'N^{3(\beta-1)/\beta}/C'^{3/\beta}$ 
			in (\ref{vvv}) yields the result at (\ref{VLower}).
	\end{proof}
	
	\begin{theo}\label{thmvup}
		$\prob$-a.s.,
		\begin{equation}\label{vlowfluc3}
			\limsup_{r\to\infty}(\log\log{r})^{-(\beta-1)/\beta}r^{-3/\beta}|B_\ust(0,r)|=\infty.
		\end{equation}
	\end{theo}
	
	\begin{proof}
	First we define a sequence of scales. Fix $\vep>0$ and let 
	\[
		D_i=e^{i^2},\ m_i=D_i/\vep(\log i)^{1/3}.
	\]	
	We now run Wilson's algorithm. 
	Let $\gamfty$ be the infinite LERW started at the origin and 
		$(S^z)_{z\in\zzz}$ be the family of independent SRW which is also independent of $\gamfty$. 
	First, at stage $i~(i\ge 1)$, we use all the vertices in $B_\infty(0,D_i)$ 
		which have not already been contained and write $\ust_i$ for the tree obtained. 
	
	By \cite{ACHS20}*{Proposition 4.1}, there exists $M>0$ such that the event 
		\begin{equation}\label{inc1}
			B_\infty(0,D_i)\subset B_\ust(0,i^M D_i^\beta)\subset B_\infty(0,i^{2M}D_i)
		\end{equation}
	occurs with probabilty greater than $1-ci^{-2}$. 
	Hence, if we run Wilson's algorithm for the vertices contained in $B_\infty(0,D_i)$ taking $\gamfty$ as the root, 
		then the probability that $\ust_i$ leaving $B_\infty(0,i^{2M})$ is less than $c\lambda^{-2}$. 
	By applying Borel-Cantelli lemma, we obtain that
	\begin{equation}\label{inc2}
		\ust_i\subset B_\infty(0,i^{2M}D_i)\subset B_\infty(0,m_{i+1}/2)
	\end{equation}
	for large $i$, almost-surely. 
	Moreover, from (\ref{inc1}), we may also assume that
	\begin{equation}\label{inc3}
		d_\ust(0,z)\le i^MD_i^\beta\le m_{i+1}^\beta\ \ \ \mbox{for all}\ \ \ z\in\ust_i
	\end{equation}
	almost-surely. 

	Define the event $F(i)$ to be the event that both (\ref{inc2}) 
		and (\ref{inc3}) hold. 
	Let $\mathcal{F}_i$ be the $\sigma$-field generated by the followings: 
	\vspace{-0.5\baselineskip}
	\begin{itemize}
	\setlength{\parskip}{0cm}
 	\setlength{\itemsep}{0cm}
		\item $\gamfty[0,\tau_{B_\infty(0,i^{2M}D_i})]$,  
		\item All simple random walks added to $\ust_{i-1}$ at stage $i$, 
	\end{itemize}
	\vspace{-0.5\baselineskip}
	where $\tau_A$ represents the first exiting time from $A$. 
	
	Now we bound the probability that the subtree $\ust_{i+1}$ obtained at stage $i+1$ 
		also satisfies the diameter estimate and the inclusion corresponding to (\ref{inc2}) 
		conditioned that $F(i)$ holds. 
	We define an event $G(i)$ by
	\begin{align*}
		G(i)&=\{|\gamfty[0,\tau_{B_\infty(0,m_{i})}]|\le m_{i}^\beta\}\cap A^{\vep(\log i)^{1/3}}\cap \left(\bigcap_{j=1}^{\vep(\log i)^{1/3}} V_{y_j}\cap W_{y_j}\right) \\
		&\cap \left(\bigcap_{k=1}^{\vep(\log i)^{1/3}}\bigcap_{j=1}^{\vep(\log i)^{1/3}} V_{z_{j,k}}\cap W_{z_{j,k}}\right),
	\end{align*}
	where replace the scales $m$ and $N$ by $m_i$ and $\vep(\log i)^{1/3}$, respectively. 
	See (\ref{defofAN}) and Definition \ref{parallel} for the definition of the events 
		$A^N$, $V_{y_j}\cap W_{y_j}$ and $V_{z_{j,k}}\cap W_{z_{j,k}}$. 

	For $A\subset\zzz$, let 
	\[
		\tau'_A=\sup\{i\mathrel{:}\gamfty(i)\in A\},
	\]
	be the last time that $\gamfty$ exits from $A$ and 
		recall that $\tau_A$ indicates the first exiting time. 
	Then, by \cite{M09}*{Proposition 4.6}, $\gamfty[0,\tau_{B_\infty(0,i^{2M}D_i})]$ and 
		$\gamfty[\tau'_{B_\infty(0,i^{4M}D_i)},\infty)$ is ``independent up to constant'', 
		\ie there exists a universal constant $C>0$ such that for any $i$ and 
		any possible paths $\eta_1,\eta_2$,
	\begin{align}\label{indp_const}
			\prob(\gamfty[0,\tau_{B_\infty(0,i^{2M}D_i)}]&=\eta_1,\gamfty[\tau'_{B_\infty(0,i^{4M}D_i)},\infty)=\eta_2)	\notag		\\
			&\ge C\prob(\gamfty[0,\tau_{B_\infty(0,i^{2M}D_i)}]=\eta_1)
				\prob(\gamfty[\tau'_{B_\infty(0,i^{4M}D_i)},\infty)=\eta_2).
	\end{align}
	Let $\widehat{\gamfty}=\gamfty[\tau_{B_\infty(0,i^{2M}D_i)},\tau_{B_\infty(0,i^{4M}D_i)}]$. Then, 
	\begin{align}
		\prob&(G(i+1)\mid F_i)	\notag	\\
		&\ge \prob\left( A^{\vep(\log (i+1))^{1/3}}\cap \left(\bigcap_{j=1}^{\vep(\log (i+1))^{1/3}} V_{y_j}\cap W_{y_j}\right)\cap \left(\bigcap_{k=1}^{\vep(\log (i+1))^{1/3}}\bigcap_{j=1}^{\vep(\log (i+1))^{1/3}} V_{z_{j,k}}\right)\right)	\notag	\\
		&-\prob(|\widehat{\gamfty}|\ge m_{i+1}^\beta \mid F_i).	\label{hikizan}
	\end{align}
	By Corollary \ref{corvvv}, we have that the first term of (\ref{hikizan}) is bounded below 
		by $Ci^{-c\vep^3}$. 

	For the second term of (\ref{hikizan}), we first consider the diameter of $\widehat{\gamfty}$.
	 
	Let $\theta_1=\{\theta_1(0),\cdots,\theta_1(k)\}$ be a path which satisfies 
	\[
		\theta_1(0)\in \partial_i B_\infty(0,i^{6M}D_i),\ \ \theta_1(k)\in \partial_i B_\infty(0,i^{4M}D_i),\ \ \theta_1(1),\cdots, \theta_1(k-1)\in B_\infty(0,i^{4M}D_i)^c,
	\]
	and $\theta_2=\{\theta_2(0),\cdots,\theta_2(l)\}$ be a path in 
		$B_\infty(0,i^{2M}D_i)$ which satisfies $\theta_2(0)=0$ and 
		$\theta_2(l)\in \partial_i B_\infty(0,i^{2M}d_i)$. 
	Let $X$ be a random walk on $\zzz$ started at $z\in \partial_i B(0,i^{6M}D_i)$ and 
		conditioned not to hit $\theta_2$. 
	We define $\sigma$ to be the first hitting time of $B_\infty(0,i^{4M}D_i)$. 
	Then by calculation of conditional probability, we have that 
	\begin{align}
		P^z(X[0,\sigma]=\theta_1)&=\frac{P^z(S[0,\sigma]=\theta_1,S[\sigma,\infty)\cap \theta_2=\emptyset)}{P^z(S[0,\infty)\cap \theta_2=\emptyset)}	\notag	\\
			&=P^z(S[0,\sigma]=\theta_1)\frac{P^{\theta_1(k)}(R[0,\infty)\cap\theta_2=\emptyset)}{P^z(S[0,\infty)=\emptyset)},	\notag	\\
	\end{align}
	where we applied the strong Markov property for the second equality. 
	Since $\theta_2$ is included in $B_\infty(0,i^{2M}D_i)$, it follows from 
	\cite{L96}*{Proposition 1.5.10} that 
	there exists some universal constant $C>0$ such that for any $\theta_1$ and $\theta_2$, 
	\begin{equation*}
		\frac{1}{C}\le \frac{P^z(X[0,\sigma]=\theta_1)}{P^z(S[0,\sigma]=\theta_1)}\le C.
	\end{equation*}
	Thus, we have 
	\begin{align}
		P(\widehat{\gamfty}\cap B_\infty(0,i^{6M}D_i)\neq \emptyset)	
		&\le \max_{z\in \partial_iB(0,i^{6M}D_i)}P(X[0,\infty)\cap B_\infty(0,i^{4M}D_i)\neq\emptyset)	\notag	\\
		&\le C\max_{z\in \partial_iB(0,i^{6M}D_i)} P(S[0,\infty)\cap B_\infty(0,i^{4M}D_i)\neq\emptyset)	\notag.
	\end{align}
	By applying \cite{L96}*{Proposition 1.5.10} again, we obtain that 
	\begin{equation}\label{widehat1}
		P(\widehat{\gamfty}\cap B_\infty(0,i^{6M}D_i)\neq\emptyset)\le Ci^{-2M}.
	\end{equation}
	
	On the other hand, we have that 
	\begin{align}
		P&(\widehat{\gamfty}\cap B_\infty(0,i^{6M}D_i)=\emptyset, |\widehat{\gamfty}|\ge m_{i+1}^\beta\mid F_i)	\notag	\\
		&\le \frac{P(\widehat{\gamfty}\cap B_\infty(0,i^{6M}D_i)=\emptyset, |\widehat{\gamfty}|\ge m_{i+1}^\beta)}{P(|\gamfty[0,\tau_{B_\infty(0,i^{2M}D_i)}]|\le m_i^\beta)}	\notag
	\end{align}
	By applying Lemma \ref{x1axis}, the denominator is bounded below by 
	\begin{equation*}
		P(|\gamfty[0,\tau_{B_\infty(0,i^{2M}D_i)}]|\le m_i^\beta)\ge P(A^{\vep(\log i)^{1/3}})\ge i^{-c\vep^3}.
	\end{equation*}
	For the numerator, now we stop conditioning on $\ust_i$ and 
	consider $\widehat{\gamfty}$ as a subset of the infinite LERW started at the origin. 
	Thus, again by \cite{S18}*{Theorem 1.4} and \cite{LS19}*{Corollary 1.3}, 
	we have that 
	\begin{align}
		P(\widehat{\gamfty}\cap B_\infty(0,i^{6M}D_i)=\emptyset, |\widehat{\gamfty}|\ge m_{i+1}^\beta)&\le P(|\gamfty[0,\tau_{B_\infty(0,i^{6M}D_i)}]|\ge m_{i+1}^\beta)	\notag	\\
			&\le Ce^{-2i}.	\notag
	\end{align}
	It follows from the above inequalities that 
	\begin{equation}\label{widehat2}
		P(\widehat{\gamfty}\cap B_\infty(0,i^{6M}D_i)=\emptyset, |\widehat{\gamfty}|\ge m_{i+1}^\beta\mid F_i)\le Ci^{c\vep^3}e^{-2i}.
	\end{equation}
	Substituting (\ref{widehat1}) and (\ref{widehat2}) into (\ref{hikizan}) yields 
	\begin{equation}
		P(G(i+1)\mid F_i)\ge Ci^{-c\vep^3},
	\end{equation}
	from which it follows that 
	\begin{align*}
		\prob(G(i+1)\mid\mathcal{F}_i)&\ge \prob(G(i+1)\mid\mathcal{F}_i)\mathbf{1}_{F(i)}	\\
		&\ge \exp\{-c(D_{i+1}/m_{i+1})^3 \log (D_{i+1}/m_{i+1})\}-Ci^{2M}	\\
		&\ge i^{-c\vep^3}
	\end{align*}
	for large $i$. Since $G(i)$ is $\mathcal{F}_i$-measureable, it follows 
		from the conditional Borel-Cantelli lemma that $G(i)$ occurs infinetely often, almost-surely. 
	Note that on $G(i)$ we have that
	\[
		|B_\ust(0,C'D_i m_i^{\beta-1})|\ge c'D_i^3.
	\] 
	Finally, the reparameterization $r_i=C'D_im_i^{\beta-1}$ yields the result. 
	\end{proof}
	

\section{Lower volume fluctuations and resistance estimate}



	In this section, we prove bounds for the volume and effective resistance which are key ingredient 
		of the proof of upper fluctuations for the heat kernel. 
	
\subsection{Lower volume fluctuations}

	In this subsection, we prove (\ref{main-2-1}), lower volume fluctuations of log-logarithmic 
		magnitude in Theorem \ref{thmvlow}. 
	Recall that $\beta$ is the 
		growth exponent of the three-dimensional LERW and $B_\ust$ indicates intrinsic balls in the 
		three-dimensional UST $\ust$ on a probability space $(\Omega,\mathcal{F},\prob)$. 
	
	\begin{prop}\label{VUpper}
		There exist $c_6,c_7>0$ such that for all $\lambda>0$ and $r\ge 1$, 
		\begin{equation}\label{vupper}
			\prob(|B_\ust(0,r)|\le \lambda^{-1}r^{3/\beta})\ge c_6\exp\{-c_7\lambda^{\beta/(3-\beta)}\log \lambda\}.
		\end{equation}
	\end{prop}
	
	\begin{remark}
		See \cite{ACHS20}*{Theorem 5.1} for an exponential upper bound for the probability in (\ref{vupper}). 
	\end{remark}
	
	Here we follow the idea of \cite{BCK21}, Section 3 and 4 again. 
	Recall that $\gamfty$ is the infinite LERW started at the origin as the first step of Wilson's algorithm. 
	For the result at (\ref{vupper}), we will take a similar approach to the previous section, 
		in which we construct the UST $\ust$ by Wilson's algorithm in a collection of small boxes.  
	We consider a rectangular prism of side-length $(2N-1)m$, $(2N-1)m$, $2Nm$ 
		as a collection of $2N(2N-1)^2$ small boxes of side-length $m$. 
	We let the origin be located at the center of one of the two boxes closest to the center of the 
		large rectangular prism. 
	Let $\pi$ be an sequence of the boxes that starts at the one containing the origin and 
		spirals outwards. 

	Now we describe an example of how to construct such a spiral inductively. 
	In the case of $N=1$, we start at the box containing the origin and then move to the another one. 
	Without loss of generality, we can let the latter cube be centered at $(0,0,m)$ and 
		$\pi$ go upwards. 
	In the case of $N=2$ (also see Figure 2), we first continue to move upwards to the box 
		centered at $(0,0,2m)$ and spiral outwards in the upper face of the cube. 
	Then we spiral down along the side of the cube. 
	Finally we spiral inwards the lower face of the cube and end up with the box centered at $(0,0,-m)$. 
	Suppose that we have constructed the spiral up to the $N$-th step. 
	If $N$ is odd, the last step ended up with the box centered at $(0,0,Nm)$, therefore we move to 
		the box centered at $(0,0,(N+1)m)$ and 
		continue in the same procedure as the previous case ($N=2$). 
	If $N$ is even, the last step ended up with the box centered at $(0,0,-(N-1)m)$, therefore we first 
		move to the box centered at $(0,0,-Nm)$ and spiral outwards the lower face of the cube, 
		spiral up along the side of the cube and then spiral inwards in the upper face of the cube. 

	\begin{figure}[htb]
		\centering
		\includegraphics[width=0.75\linewidth]{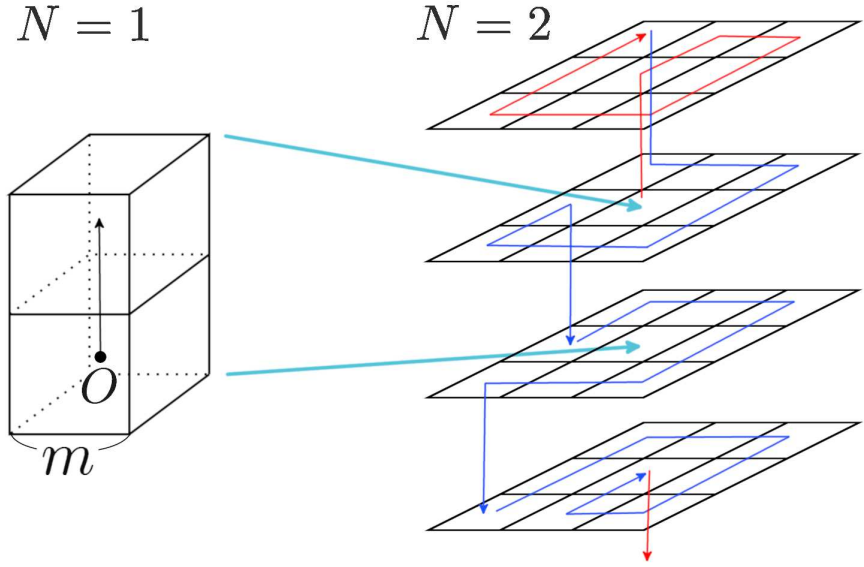}
		\caption{An example of three-dimensional spiral}
	\end{figure}
	
	\begin{remark}
		Note that the following argument can be applied to any spiral $\pi$ which step by step goes outwards 
			without getting close to the origin. 
	\end{remark}

	In order to prove Proposition \ref{VUpper}, we consider several events similar to 
		those which we considered in the previous section.
	Let $\{x_j\}_{j=1}^{2N(2N-1)^2}$ be a sequence of the center of the boxes in the $\pi$, \ie 
		$\pi=\{\Bx{j}\}_{j=1}^{2N(2N-1)^2}$, where $\Bx{j}=B_\infty(x_j,m/2)$. 
	Now we define some events for the SRW $S$ started at the origin. 
	By linear interpolation, we may assume that $S[0,\infty)$ is a continuous curve in $\rrr$. 
	\begin{defi}\label{def44}
	For $j=1,2,\cdots, 2N(2N-1)^2$, let $Q_j$ be the face of $\Bx{j}$ which is 
		the closest to $x_{j+1}$ and let $w_j\in \rrr$ be the center of $Q_j$. 
	Then we define $\tQ_j$ by 
	\[
		\tQ_j=Q_j\cap B_\infty(w_j,m/4),
	\]
	a subset of the face $Q_j$ which is not too close to its edges. 
	For $a>0,b\ge 0$, we define 
	\begin{align*}
		Q_j[-a,-b]&=\{y\in \Bx{j}\mathrel{:} b\le d_E(y,Q_j)\le a\},	\\
		Q_j(-a)&=\{y\in \Bx{j}\mathrel{:} d_E(y,Q_j)=a\},	\\
		R_j&=Q_j(-m/2)\cap (B(x_j,m/8)^c\cup B(x_j,m/10)).
	\end{align*}
	For a continuous surve in $\rrr$ and $a\in \mathbb{R}$, we define
	\[
		t_\lambda(Q_j(a))=\inf \{k\ge 0\mathrel{:}\lambda(k)\in Q_j(a)\}.
	\]
	\end{defi}
	The sets $\widetilde{Q}_j$, $Q_j[-a,-b]$, $Q_j(-a)$ and $R_j$ we defined above correspond to the idea of 
		$\widetilde{Q}(a_{j+1})$, $Q[a_{j+1}-a,a_{j+1}-b]$, $Q(a_{j+1}-a)$ and $R_j$, 
		which we defined in Definition \ref{defsec3} in Section 3, respectively. 

\begin{figure}[htbp]
	\begin{minipage}[b]{0.48\linewidth}
		\centering
		\includegraphics[width=1\linewidth]{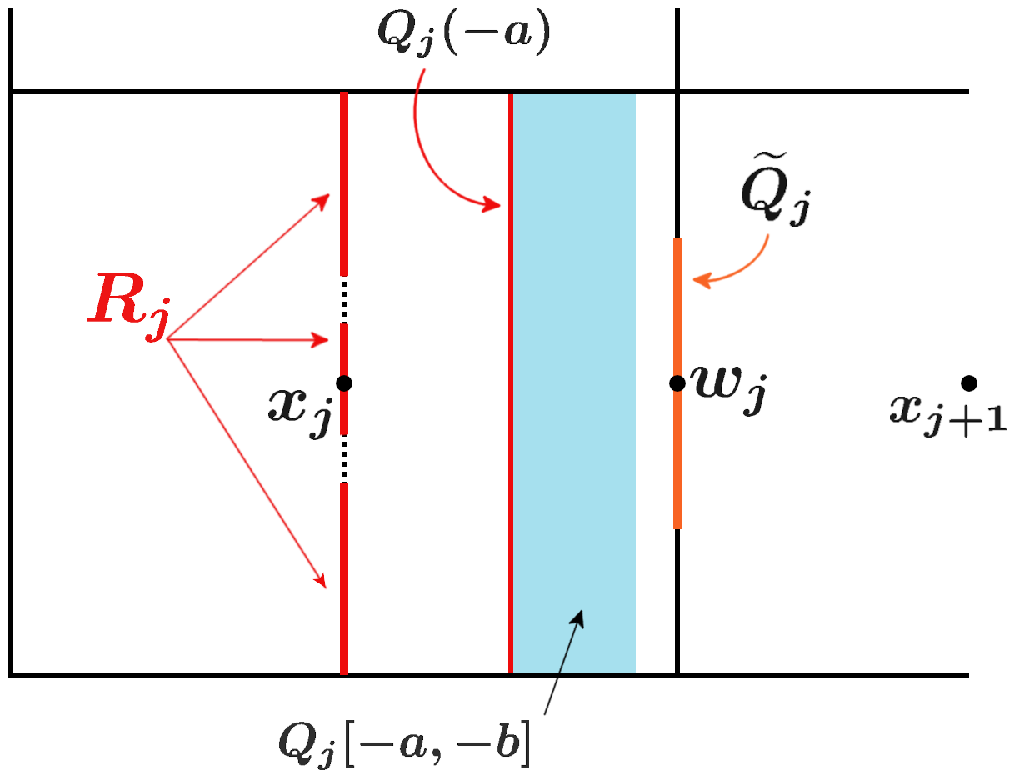}
		\caption{The sets defined in Definition \ref{def44}}
	\end{minipage}
	\hspace{0.01\linewidth}
	\begin{minipage}[b]{0.48\linewidth}
		\centering
		\includegraphics[width=1\linewidth]{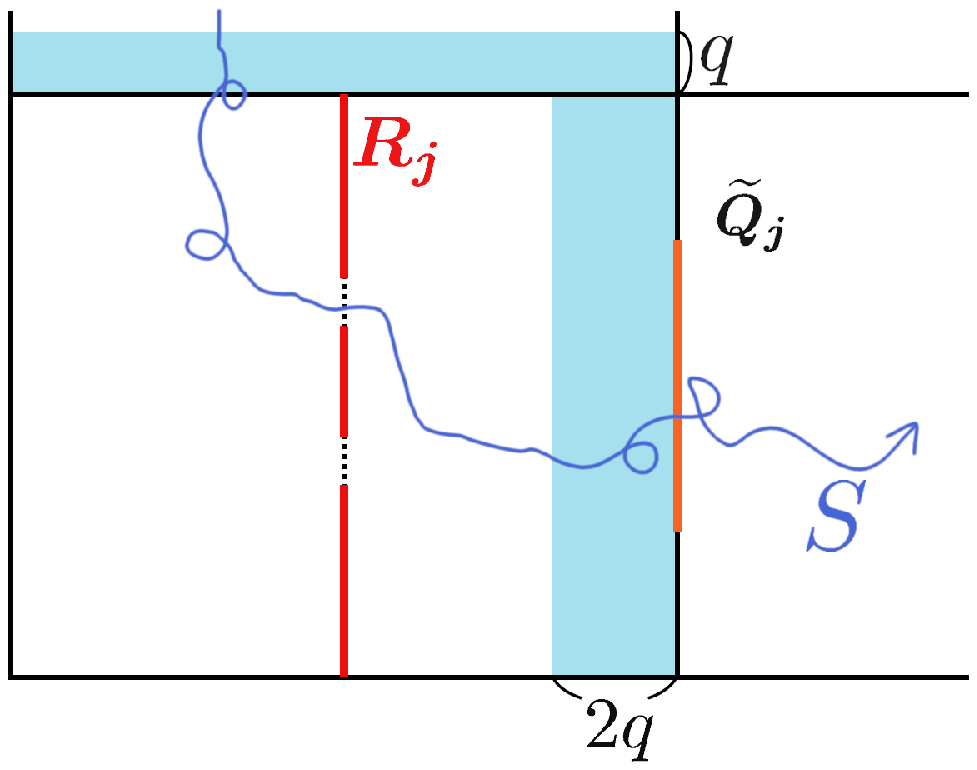}
		\caption{Definition of the event $A'_j$}
	\end{minipage}
	\end{figure}
	
	Now we define some events for the SRW $S$. Let $q=m/N$ and
	\begin{align}
	A'_1=\{t_S&(Q_1(0))<\infty, S\left(t_S(Q_1(0))\right)\in \tQ_1, S[t_S(Q_1(-q)),t_S(Q_1(0))]\cap Q_1(-2q)=\emptyset\},\notag	\\
	A'_j=\{t_S&(Q_{j-1}(0))<t_S(Q_j(0))<\infty, S\left(t_S(Q_j(0))\right)\in \tQ_j, \notag	\\
		&S[t_S(Q_{j-1}(0)),t_S(Q_j(0))]\subset Q_{j-1}[-q,0]\subset\Bx{j}\setminus R_j,	\notag	\\
		&S\left[t_S(Q_j(-q)),t_S(Q_j(0))\right]\subset Q_j[-2q,0]\},\ \ \ \ \ \ \ \ \ \ \mbox{for }j\ge 2. \label{A'j}
	\end{align}
	
	Note that the event $A_1$ (resp. $A_j$, $j\ge 2$) is measurable with respect to 
		$S[0,t_S(Q_j(0))]$ (resp. $S[t_S(Q_{j-1}(0)),t_S(Q_j(0))]$). 
		
	We set 
	\[
		G'_j=\bigcap_{k=1}^j A'_k.
	\]

	Now we define a cut time for $S$. 
	\begin{defi}
	Suppose that the event $A'_j$ defined in (\ref{A'j}) occurs. For each $j\ge 2$, we call $k$ is a 
		nice cut time in $\Bx{j}$ if it satisfies the four conditions in Definition \ref{cuttimedef} 
		with $t_S(a_j+b)$ replaced by $t_S(Q_j(b))$ for $b\in \mathbb{R}$. 
	
	If $k$ is a nice cut time in $\Bx{j}$, then we call $S(k)$ is a nice cut point in $\Bx{j}$. 
	\end{defi}
	We define events $B'_j$ by 
	\begin{equation}\label{B'j}
		B'_j=\{S \mbox{ has a nice cut point in }\Bx{j}\}, 
	\end{equation}
	for each $j\ge 2$. 
	Note that event $B'_j$ is measurable with respect to $S[t_S(Q_{j-1}(0)),t_S(Q_j(0))]$. 
	We define 
	\begin{equation*}
		H'_j=\bigcap_{k=2}^j B'_k.
	\end{equation*}
	\begin{figure}[htb]
		\centering
		\includegraphics[width=0.6\linewidth]{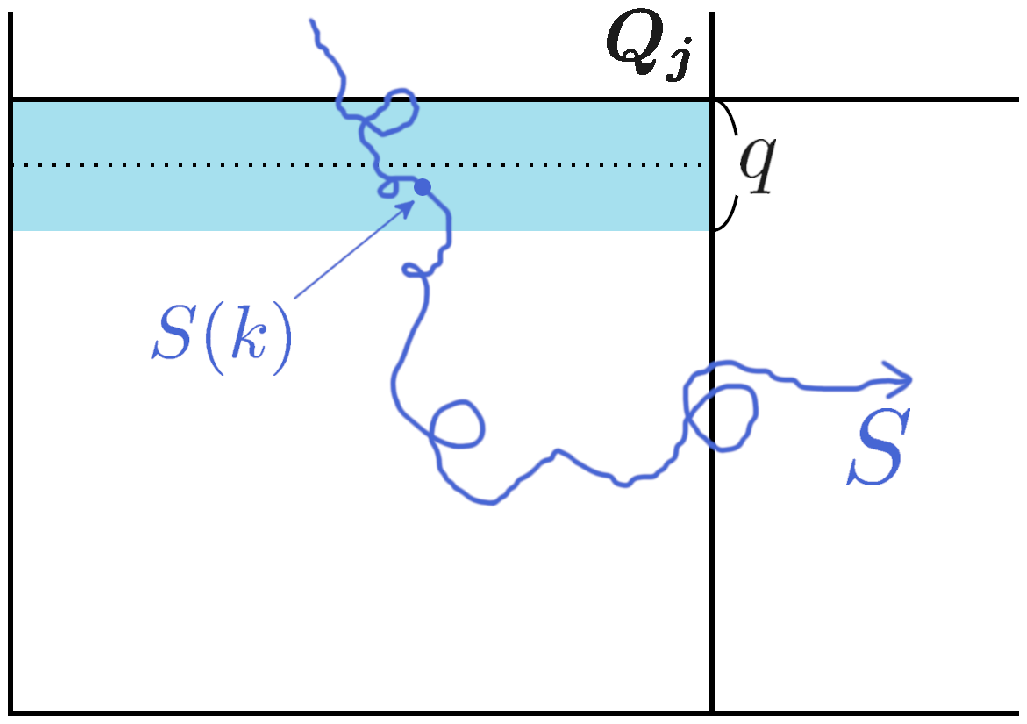}
		\caption{Example of nice cut point in $B_{x_j}$}
	\end{figure}
	
	The events $A'_j$ and $B'_j$ correspond to the idea of the events $A_j$ and $B_j$, which we defined in Section 3 
		to estimate upper fluctuation of the volume of the three-dimensional UST. 

	Now we consider the length and the hittability of the loop erasure of $S$. 
	Suppose that the event $G'_j\cap H'_j$ occurs and let $k_j$ be a nice cut time in $\Bx{j}$. 
	We set 
	\begin{align}
		\xi''_j&=\mathrm{LE}(S[0,t_S(Q_j(0))])~\mbox{ for }j\ge 1,	\notag	\\
		\lambda'_j&=\mathrm{LE}(S[k_j,t_S(Q_j(0))])~\mbox{ for }j\ge 2,	\notag
	\end{align}
	and let 
	\begin{align}
		s_j&=\inf\{k\ge 0\mathrel{:}\xi''_{j-1}\in S[t_S(Q_{j-1}(0)),t_S(Q_j(0))]\},	\notag	\\
		t_j&=\sup\{t_S(Q_{j-1}(0))\le k\le t_S(Q_j(0))\mathrel{:}S(k)=\xi''_{j-1}(s_j)\},	\notag	\\
		u_j&=\inf\{k\ge 0\mathrel{:} \lambda'_j(k)\in Q_j[-q,0]\},	\notag
	\end{align}
	for $j\ge 2$. Then we have 
	\[
		\xi''_j=\xi''_{j-1}[0,s_j]\oplus \mathrm{LE}(S[t_j,k_j])\oplus\mathrm{LE}(S[k_j,t_S(Q_j(0))])
			\supset \xi''_{j-1}[0,s_j]\cup \lambda'_j,
	\]
	and therefore,
	\[
		\xi_j[0,s_{j+1}]\supset \xi''_{j-1}[0,s_j]\cup \lambda'_j[0,u_j],
	\]
	on the event $G'_j\cap H'_j$. 
	\begin{figure}[htb]
		\centering
		\includegraphics[width=0.5\linewidth]{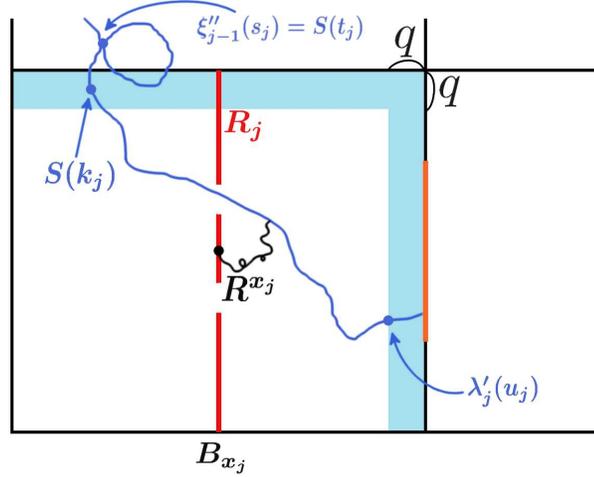}\label{eventEFpj}
		\caption{The sets and the points we consider in the events $E'_j$ and $F'_j$}
	\end{figure}
	Thus, in order to bound the length of $\eta_j$ from below, we need to estimate the length of $\lambda'_j[0,u_j]$.
	For $j\ge 2$ and $C>0$, we define the event $E'_j(C)$ by
	\begin{equation}\label{E'j}
		E'_j=E'_j(C)=\{\mathrm{len}(\lambda'_j[0,u_j])\ge Cm^\beta\}.
	\end{equation}
	Moreover, we define the event $F'_j(\eta)$ for the hittability of $\lambda'_j$ for $j\ge 2$ by
	\begin{equation}\label{F'j}
		F'_j=F'_j(\eta)=\{P^{x_j}(\lambda'_j\cap(R^{x_j}[0,T_{R^{x_j}}(2m/5)])\neq\emptyset)\ge \eta\},
	\end{equation}
	where $P^z$ indicates the law of $R^z$, the simple random walk started at $z$ independent 
		of $S$. 
				
	The next lemma is an analog of Lemma \ref{c-eta}, which gives a lower bound on the probability that 
		the events as defined above occur simultaneously. 
	\begin{lem}\label{lem_low}
		Let $P$ be the law of $S$. 
		There exist universal constants $0<\eta^*,c^*<\infty$ such that 
		\begin{equation*}
			P(A'_1)\ge c^*
		\end{equation*}
		and for all $j\ge 2$,
		\begin{equation}\label{4ev_low}
			\min_{x\in\tQ_j}P^x(A'_j\cap B'_j\cap E'_j(c^*)\cap F'_j(\eta^*))\ge c^*N^{-2}.
		\end{equation}
	\end{lem}	
	\begin{proof}
	Similarly to the proof of Lemma \ref{c-eta}, it sufficies to show that for a fixed constant $c$, 
		there exists a universal constant $c'$ such that $P^x(E'_2(c') \mid A'_2)\ge c$
		holds uniformly in $x\in\tQ_1$. 
	We consider a small box $B_\infty(x_2,m/6)$ of side-length $m/3$, 
		which is included in $\Bx{2}$. 
	Let $I$ be the number of points lying in both $\lambda'_2$ and $B_\infty(x_2,m/6)$. 
	Then by \cite{S18}*{Lemma 8.9}, there exists a universal constant $c'$ such that 
	\[
		P^x(I \ge c'm^\beta\mid A'_2)\ge c.
	\]
	Since $|\mathrm{len}(\lambda'_2[0,u_2])|\ge I$, we have that 
	\[
		P^x(|\lambda'_2[0,u_2]|\ge c'm^\beta \mid A'_2)\ge c,
	\]
	uniformly in $x\in \tQ_1$, which completes the proof.
	\end{proof}

	For $j=1,2,\cdots, L(l_N)$, let $M'_j(\lambda)$ be an event defined by 
	\begin{equation*}
		M'_j(\kappa)=\{B(x_j,\kappa^{-2}m)\subset B_{\ust^N}(x_j,\kappa^{-1}m^\beta\}.
	\end{equation*}
	Let $c_*$ be a constant which satisfies (\ref{4ev_low}). 
	By the same argument as Lemma \ref{balls}, there exists some constant $\kappa_*>0$ 
		depending only on $c_*$ such that 
	\begin{equation}\label{kappa*}
		\prob\left(M'_j(\kappa_*)) \mid A'_j\cap B'_j\cap E'_j(c_*)\cap F'_j(\eta_*) \right) \ge c_*.
	\end{equation}
	We set $I'_j=\bigcap_{k=2}^j I'_k(\kappa_*)$ for $\kappa_*$ defined in (\ref{kappa*}). 

	Let $J'_j=\bigcap_{k=2}^j E'_k(c^*)$ and $K'_j=\bigcap_{k=2}^j F'_k(\eta^*)$
	for $c^*$ and $\eta^*$ defined in Lemma \ref{lem_low}. 
	We define
	\begin{align}
		U'_N=&\left\{S(T_S(R(N)))\in\{(y^1,y^2,y^3)\in\rrr\mathrel{:} y^1\ge \frac{4}{5}R(N)\},\right. \notag	\\
		&\ \ \left.S[t_S(Q_{4N(4N-1)^2}),T_S(R(N))]\cap B\left(0,3\left(N-\frac{1}{2}\right)\right)=\emptyset\right\},	\label{U'N}
	\end{align}
	where $R(N)=\exp\{2e^{RN^2}+1\}$ and set 
	\begin{equation}\label{defofB^N}
		B^N=G'_{2N(2N-1)^2}\cap H'_{2N(2N-1)^2}\cap I'_{2N(2N-1)^2}\cap J'_{2N(2N-1)^2}\cap K'_{2N(2N-1)^2}\cap U'_N.
	\end{equation}
	\begin{figure}[htb]
		\centering
		\includegraphics[width=0.8\linewidth]{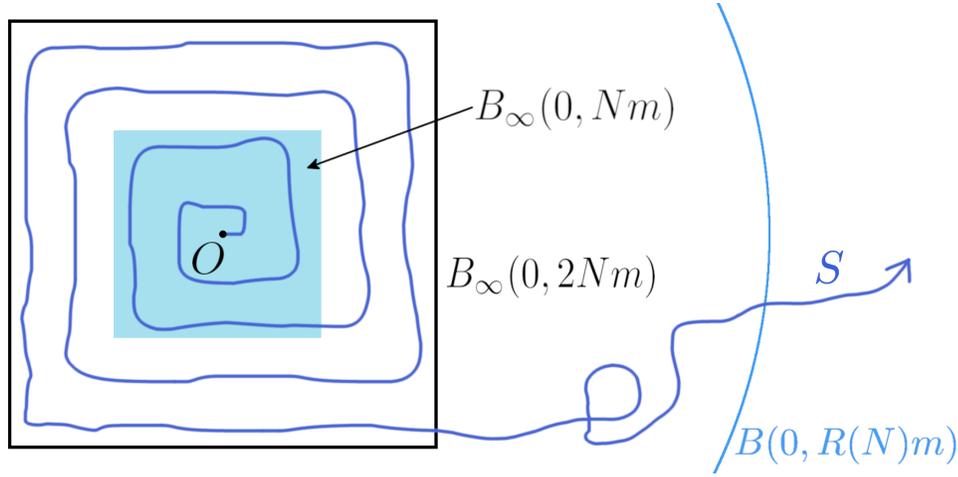}
		\caption{Definition of the event $U'_{N}$}
	\end{figure}

	Then by the same argument as Lemma \ref{x1axis}, we obtain the following lemma. 
	\begin{lem}
		There exists a universal constant $c_8>0$ such that 
		\begin{equation}\label{defBtilde}
			P(B^N)\ge c_8^{-1}\exp \{-c_8N^3(\log N)\}.
		\end{equation}	
	\end{lem}
	
	On the event $B^N$, the first LERW $\gamfty$ in Wilson's algorithm moves through the 
		spiral $\pi$ and its length up to the $j$-th box is bounded below by $Cjm^\beta$. 
	
	\begin{lem}\label{leminclu}
	There exist universal constants $c,c',C$ such that for all $m\ge 1$,
	\begin{equation}\label{inclu}
		\prob\left(B_\ust\left(0,cN^3 m^\beta\right)\subset B_\infty\left(0,\frac{2}{3}Nm)\right)\right) \ge C\exp\{-c'N^3(\log N)\}.
	\end{equation}
	\end{lem}
	
	Once we prove the above lemma, we will obtain Proposition \ref{VUpper} as follows. 
	\begin{proof}[Proof of Propositon \ref{VUpper}]
	It follows from (\ref{inclu}) that 
	\[
		\prob\left(\left|B_\ust\left(0,cN^3 m^\beta\right)\right|\le \frac{8}{27}N^3m^3\right)\ge C\exp\{-c'N^3(\log N)\}.
	\]
	Reparameterizing $r=cN^3m^\beta$ and 
		$\lambda=\frac{27}{8}c^{3/\beta}N^{9/\beta-3}$ yields the result at (\ref{vupper}).
	\end{proof}

	\begin{proof}[Proof of Lemma \ref{leminclu}]
	We may assume that $m$ is sufficiently large for the same reason as \cite{ACHS20}*{Proposition 4.1}. 
	
	We first take a sequence of subsets of $\zzz$ including the boundary of $B(0,(2N-1)m/3)$. 
	For each $k\ge 1$, let $\vep_k=N^{-4/3}2^{-k-10}$, $\eta_k=(15k)^{-1}$ and 
	\[
		A_k=B_\infty\left(0,\left(\frac{2}{3}+\eta_k\right)(2N-1)\frac{m}{2}\right)\setminus B_\infty\left(0,\left(\frac{2}{3}-\eta_k\right)(2N-1)\frac{m}{2}\right).
	\]
	We let $D_k\subset \zzz$ be a subset of $A_k$ such that 
		$A_k\subset \bigcup_{z\in D_k} B(z,(2N-1)m\vep_k)$ and we suppose that 
		$|D_k|\le C\vep_k^{-3}$. 
	Write $k_0$ for the smallest integer satisfying $(2N-1)m\vep_{k_0}<1$. 
	Note that for sufficiently large m, both the inner and outer boundary of 
		$B_\infty(0,\frac{1}{3}(2N-1)m)$ are contained in $A_{k_0}$. 
	Moreover, we have $\partial_i B_\infty(0,\frac{1}{3}(2N-1)m)\subset D_{k_0}$ by the definition 
		of $k_0$. 
	
	We begin with performing Wilson's algorithm in $\bigcup_{j=1}^{2N(2N-1)^2} \Bx{j}$. 
	Let $\ust^N_0$ be the subtree of the UST constructed in the event $B^N$. 		
	Then we follow the same construction of the sequence of subtrees 
		$\ust^N_1,\ust^N_2,\cdots, \ust^N_{k_0}$ 
	as (i)-(iv) in the proof of Lemma \ref{balls} and end up with the subtree $\ust^N$, 
		which contains all points in $\bigcup_{j=1}^{2N(2N-1)^2} B(x_j,m/2)$.

	Next we consider the hittability of branches in the constructed subtree. 
	We take $z\in D_1$, then there exists $j\in[1,2,\cdots,2N(2N-1)^2]$ such that $z\in\Bx{j}$. 
	Let $\tau_{B_j}$ be the first hitting time of $B(x_j,\lambda^{-2}m)$ and $\sigma_{Q_j}$ 
		be the first exiting time of $B_\infty(x_j,3m/2)$ by $R^z$, a simple random walk started 
		at $z$ and independent of $S$. 
	Then by \cite{L96}*{Proposition 1.5.10}, there exists some $p>0$ such that for all $j\ge 1$ and $z\in \Bx{j}$ 
	\begin{equation}\label{sigmaQ} 
		P^z(\sigma_{Q_j}<\tau_{B_j})\le p,
	\end{equation}
	holds. 
	We define the event $L(1,z)$ by
	\[
		L(1,z)=\{\mbox{$\gamma_{\ust^N}(z,\infty)$ exits the cube $B_\infty(z,Nm/100)$ before hitting the subtree $\ust^N_0$}\},
	 \]
	for $z\in D_1$ and suppose that $L(1,z)$ occurs, 
		where $\gamma_{\ust^N}(y,\infty)$ stands for the unique infinite path of the subtree $\ust^N$
		started at $y$.
	Then the event $\{\sigma_{Q_j}<\tau_{B_j}\}$ occurs and by (\ref{sigmaQ}) 
		its probability is smaller than $p$. 
	By iteration, the number of boxes of side-length $3m/2$ that $R^z$ exist 
		before hitting $\ust^N_0$ is larger than $N/200$. 
	Hence, by the strong Markov property, it follows that 
	\[
		\prob(L(1,z))\le p^{N/200},
	\]
	for all $z\in D_1$. Since $|D_1|\le C\vep_1^{-3}$, we have that 
	\[
		\prob\left(\bigcap_{z\in D_1} L(1,z)^c \right)\ge 1-N^4 p^{N/200}.
	\]

	We next define the event that guarantees the hittability of branches of $\ust^N_k$ starting 
		at $z\in D_k$. 
	For $k\ge 1$ and $x\in D_k$, let 	
	\begin{align}
		I'&(k,x,\zeta)	\notag	\\
		&=\left\{P_R^y\left(R\left[0,T_{R^y}(y,(2N-1)m\vep_k^{1/2})\right]\cap (\ust^N_0\cup \gamma_{\ust^N}(x,\ust^N_0))\right)\le \vep_k^\zeta \mbox{ for all }y\in B(x,(2N-1)m\vep_k)\right\}.
	\end{align}
	and $I'(k,\zeta)=\bigcap_{x\in D_k}I'(k,x,\zeta)$. 
	Again by \cite{SaSh18}*{Lemma 3.2}, there exist universal constants 
		$\zeta_3>0$ and $C<\infty$ such that for all $k\ge 1$, $m\ge 1$, and 
		$x\in D_k$,
	\[
		\prob_{\ust^N}(I'(k,x,\zeta_3)^c)\le C\vep_k^5.
	\]
	Combining this with $|D_k|\le C\vep_k^{-3}$ yields that 
	\[
		\prob_{\ust^N}(I'(k,\zeta_3)^c)\le C\vep_k^2.
	\]	
	
	We finally define an event $L(k,z)$ for $k\ge 2$ and $z\in D_k$ by
	\[
		L(k,z)=\{\mbox{$\gamma_{\ust^N}(z,\infty)$ exits $B(z,(2N-1)m\vep_{k-1}^{1/3})$ before hitting $\ust^N_{k-1}$}\},
	\]
	and set $M_1\coloneqq B^N\cap (\bigcap_{z\in D_1} L(1,z)^c)\cap I'(1,\zeta_3)$ 
		and $M_k\coloneqq M_{k-1}\cap (\bigcap_{z\in D_1} L(1,z)^c) \cap I'(k,\zeta_3)$ 
		inductively for $k\ge 2$.
	Suppose that the event $M_{k-1}$ occurs. 
	The number of balls of radius $(2N-1)m\vep_{k-1}^{1/2}$ that $R^z$ 
		exits before hitting $\ust^N_{k-1}$ is larger than $\vep_{k-1}^{-1/6}$. 
	By the strong Markov property, it holds that   
	\begin{equation*}
		P^z(R^z\mbox{ exits }B(z,(2N-1)m\vep_1^{1/3})\mbox{ before it hits }\ust^N_{k-1})\le \vep_{k-1}^{c\zeta_3\vep_{k-1}^{-1/6}},
	\end{equation*}
	for some universal constant $c>0$. 
	Since $|D_k|\le C\vep_k^{-3}$, we have that
	\begin{equation}\label{epseps}
		\prob\left(\bigcap_{z\in D_k} L(k,z)^c\relmiddle| M_{k-1}\right)\ge 1-C\vep_k^{-3}\vep_{k-1}^{c\zeta_3\vep_{k-1}^{1/6}},
	\end{equation}
	
	It follows from the argument above that 
	\[
		\prob(M_1\mid B^N)\ge 1-N^{-8/3},
	\]
	and
	\[
		\prob(M_k\mid M_{k-1}) \ge 1-C\vep_k^2.
	\]
	Hence we can conclude that 
	\begin{equation*}
		\prob(M_{k_0}\mid B^N)=\prob(M_1\mid B^N)\prod_{k=2}^{k_0} \prob(M_k\mid M_{k-1}) \ge (1-N^{-8/3})\prod_{k=1}^\infty (1-C\vep_k^2)\ge 1-CN^{-8/3}.
	\end{equation*}

	On the event $M_{k_0}$, for all $y\in \ust^N$,
	\begin{equation*}
		d_E(y,D_1)\le \frac{Nm}{100}+\sum_{k=2}^{k_0} (2N-1)m\vep_k^{1/6} \le \frac{Nm}{100}+(2N-1)m\sum_{k=2}^{k_0}\vep_k^{1/6} \le \frac{Nm}{50},
	\end{equation*}
	\ie $\gamma_\ust(y,0)$ hits the subtree $\ust^N_0$ before it exits the ball centered at $y$ 
		of radius $Nm/50$. 
	Hence we have
	\[
		d_E(0,y)\ge d_E(0,D_1)-\frac{Nm}{50}\ge \frac{11}{20}(2N-1)m.
	\]
	Since $\partial_i B_\infty(0,\frac{1}{3}(2N-1)m)\subset D_{k_0}$, 
		$\gamma_{\ust}(z,0)$ hits $\ust^N_{k_0}$ before 
		entering $B_\infty(0,\frac{11}{20}(2N-1)m)$ for all $z\in B_\infty(x,\frac{1}{3}(2N-1)m)$. 
	By the definition of the spiral $\pi$, it follows that on the event $M_{k_0}$, 
	\[
		z\in B_\infty\left(0,\frac{1}{3}(2N-1)m\right)^c \Longrightarrow d_{\ust}(0,z)\ge CN\left(N-\frac{1}{2}\right)^2m^\beta
	\]
	for some universal constant $c$, \ie 
		$B_\ust(0,cN(N-\frac{1}{2})^2m^\beta)\subset B_\infty(0,\frac{1}{3}(2N-1)m)$.
	By taking $N$ sufficiently large, we obtain (\ref{inclu}) for some universal constant $c'$. 
	\end{proof}
	
	\begin{theo}\label{thmvlow}
		$\prob$-a.s., 
		\begin{equation}\label{vlowfluc}
			\liminf_{r\to\infty}(\log\log r)^{(3-\beta)/\beta}r^{-3/\beta}|B_\ust(0,r)|=0.
		\end{equation}		
	\end{theo}
	\begin{proof}
	Similarly to the proof of upper volume fluctuation in Theorem \ref{thmvup}, 
		we begin with defining a sequence of scales by 
	\[
		D_i=e^{i^2},\ m_i=D_i/\vep(\log i)^{1/3}.
	\]
	Let $\gamfty$ be the infinite LERW started at the origin and 
		$(S^z)_{z\in\zzz}$ be the family of independent SRW which is 
		also independent of $\gamfty$. 
	Then by the same argument as obtaining (\ref{inc2}) and (\ref{inc3}), 
	\begin{align}
		\ust_i\subset B_\infty(0,i^{2M}D_i)&\subset B_\infty(0,m_{i+1}/2),	\\
		d_\ust(0,z)\le i^MD_i^\beta&\le m_{i+1}^\beta\ \ \ \mbox{for all}\ \ \ z\in\ust_i,
	\end{align}
	holds for large $i$, almost-surely. 
	
	We define an event $H(i)$ by 
	\[
		H(i)=\bigcap_{j=1}^i B^{\vep(\log j)^{1/3}},
	\]
	where the event $B^N$ is as defined in (\ref{defofB^N}) and at each stage $i$ we rescale by 
		$m=m_i$ and $N=\vep(\log i)^{1/3}$. 
	Then by (\ref{indp_const}), \ie the ``independence up to constant'' of $\gamfty$, 
		there exists a universal constant $C>0$ such that for any $i$, 
	\begin{align*}
		\prob(H(i+1)\mid \mathcal{F}_i)&\ge \prob(H(i+1)\mid \mathcal{F}_i)\mathbf{1}_{H(i)}	\\
			&\ge C\prob\left(B^{\vep(\log (i+1))^{1/3}}\right)	\\
			&\ge Ci^{-c\vep^3},
	\end{align*}
	where the last inequality follows from (\ref{defBtilde}). 
	Note that on $H(i)$ we have that 
	\[
		|B_\ust(0,cD_i^3m_i^{\beta-3})|\le \frac{8}{27}D_i^3.
	\]
	Finally, the reparameterization $r_i=cD_i^3m_i^{\beta-3}$ yields the result. 
	\end{proof}




\subsection{Estimates for effective resistance}

	In order to demonstrate upper heat kernel fluctuations, we need to estimate the effective resistance of balls in 
		the three-dimensional uniform spanning tree. 
	See (\ref{defofreff}) for the definition of effective resistanve. 
		
	Suppose that the event $B^N$ defined in (\ref{defBtilde}) occurs. 
	We give estimates for bounds of the volume and a lower bound of the effective resistance of UST 
		on the event $B^N$. 
	
	\begin{prop}\label{vvreff}	
		There exists some universal constant $0<c_9,c_{10},c,C<\infty$ and the event 
			$\widetilde{M}^N$ with $\prob(\widetilde{M}^N)\ge c_9\exp\{-c_{10}N^3\log N\}$ 
			such that on $\widetilde{M}^N$, the followings hold\red{:}
		\begin{align}
			&|B_\ust(0,N^3m^\beta)|\le \frac{1}{4}N^3m^3	\notag, \\
			&|B_\ust(0,CN^3m^\beta/(\log N)^{300})|\ge cN^3m^3/(\log N)^{300}	\notag, \\
			&\Reff{x}{B_\ust(0,N^3m^\beta)}\ge cN^3m^\beta/(\log N)^{300}\mbox{ for all }x\in B_\ust(0,cN^3m^\beta/(\log N)^{300}).	\label{reffbound}
		\end{align}
	\end{prop}
	We will apply \cite{KuMi08}*{Proposition 3.2} to bound the heat kernel on $\ust$ from above. 
	In order to do so, we need to estimate (i) upper bound of the volume of an intrinsic ball 
		$B_\ust(0,N^3m^\beta)$, (ii) lower bound of the volume of a smaller ball 
		$B_\ust(0,\vep N^3m^\beta)$ where $\vep$ is some small constant and 
	(iii) lower bound of the effective resistance between $x\in B_\ust(0,\vep N^3m^\beta)$ and 
		the boundary of the ball $B_\ust(0,N^3m^\beta)$, which correspond to the three 
			inequalities above. 

	\begin{figure}[htb]
		\centering
		\includegraphics[width=0.6\linewidth]{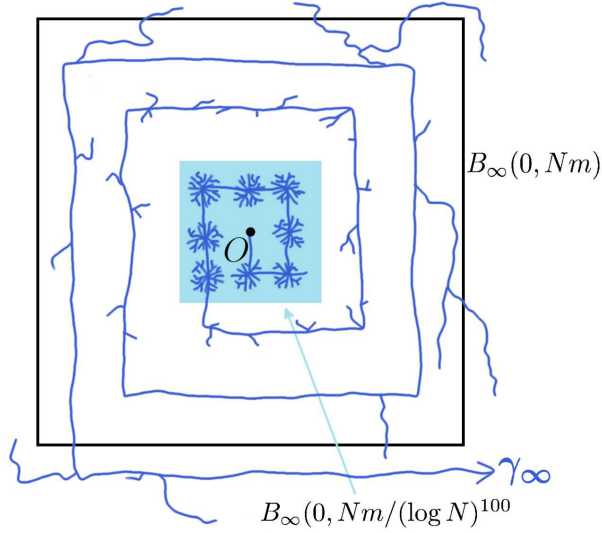}\label{resistance_bound}
		\caption{The unlike event which we consider in Proposition \ref{vvreff}. In order to bound the effective resistance from below, we need to consider two properties: (i) paths started at a point in $B_\ust(0,N^3m^\beta)^c$ do not enter a smaller ball $B_\ust(0,N^3m^\beta/(\log N)^{300})$ and (ii) paths branching from $\gamfty$ at a point close to the origin have limited length.}
	\end{figure}
	
	\begin{proof}
		We set $l_N=N/(\log N)^{100}$ and $L(k)=2k(2k-1)^2$. 
		Let $S$ be the SRW started at the origin and recall that events $A'_j$, $B'_j$, 
			$E'_j$, $F'_j$ and $U'_N$ are as defined in (\ref{A'j}), (\ref{B'j}), (\ref{E'j}), (\ref{F'j}) and 
			(\ref{U'N}), respectively. 
		We define events $B^N_{(1)}$ and $B^N_{(2)}$ by
		\begin{align*}
			B^N_{(1)}&=\bigcap_{j=1}^{L(l_N)}\left(A'_j\cap B'_j\cap E'_j\cap F'_j\cap \{B(x_j,\lambda^{-2}m)\subset B_{\ust}(x_j,\lambda^{-1}m^\beta)\}\right),	\\
			B^N_{(2)}&=\left\{\bigcap_{j=L(i_N)+1}^{L(N)}\left(A'_j\cap B'_j\cap E'_j\cap F'_j\cap \{B(x_j,\lambda^{-2}m)\subset B_{\ust}(x_j,\lambda^{-1}m^\beta)\}\right)\right\}\cap U'_N.
		\end{align*}
		It follows from the same argument as Lemma \ref{leminclu} that 
		\begin{align}
			\prob(B^N_{(1)})&\ge C\exp\{-cN^3(\log N)^{-299}\},\label{PBN(1)} \\ 
			\prob(B^N_{(2)})&\ge C\exp\{-cN^3(\log N)\},
		\end{align}
		for some constants $C,c>0$. 
		Note that $(\log N)^{-299}$ in the lower bound of $\prob(B_{(1)}^N)$ is the 
			result of the number of small boxes $l_N$. 
		We need this term in order to avoid the competition between $\prob(B_{(1)}^N)$ and 
			a bound for the probability that UST paths branching from $\gamfty$ near the origin 
			do not have large length.
		
		Suppose that the event $B^N_{(1)}\cap B^N_{(2)}$ occurs. 
		Then the occurence of $\bigcap_{j=1}^{L(N)} A'_j$ guarantees that the part of $S$ 
			after exiting the $L(l_N)$-th box $\Bx{{L(l_N)}}$ does not go back into 
			$\bigcap_{j=1}^{L(l_N)-1}\Bx{j}$. 
		In particular, $\mathrm{LE}(S[0,t_S(Q_{L(l_N)}(0))])$ and $\mathrm{LE}(S[0,t_S(Q_{L(N)}(0))])$ restricted to $\bigcap_{j=1}^{L(l_N/2)}\Bx{j}$ are exactly the same on $B^N_{(1)}\cap B^N_{(2)}$, 
			where $t_\lambda(Q_j(a))$ is as defined in Definition \ref{def44}. 
		
		For each $k\ge 1$, let $\vep_k=N^{-4/3}2^{-10-k}$, $\eta_k=(30k)^{-1}$ and 
		\[
			A_k=B\left(0,\left(\frac{1}{3}+\eta_k \right)l_N m\right)\setminus B\left(0,\left(\frac{1}{3}-\eta_k \right)l_N m\right).
		\]
		Write $k_0$ for the smallest integer satisfying $l_N m\vep_k<1$. 
		Now we take a ``$\vep_k$-net'' of $A_k$, \ie let $D_k$ be a set of lattice points in $A_k$ such that 
			$\bigcup_{z\in D_k} B\left(z,l_N m\vep_k\right)$ with $|D_k|\le C\vep_k^{-3}$. 
		
		Now we perform Wilson's algorithm in $\bigcup_{j=1}^{L(N)}\Bx{j}$ to 
			obtain the subtree $\ust^{L(N)}_{k_0}$ in the same procedure as we did in Lemma \ref{leminclu}. 
		Note that $\ust_0^{L(N)}$ is the union of the infinite LERW $\gamfty$ started at the origin 
		and balls $B_\ust(x_j,\lambda^{-1}m^\beta)$ constructed in the event $I'_{2N(2N-1)^2}$. 
		For $z\in\ust_{k_0}^{L(N)}$, let $\gamma_{\ust^{L(N)}}(x,\infty)$ be the unique 
			infinite path in $\ust_{k_0}^{L(N)}$ starting at $z$. 
		We set $C_p=5\log N/\log(1/p)$ where $p$ be as defined in (\ref{sigmaQ}). 
		For $z\in D_1$, we define the event $\widetilde{L}(1,z)$ by 
		\[
			\widetilde{L}(1,z)=\left\{\mathrm{LE}(S^z[0,\tau(\ust_0^{L(N)})]) \not\subset B\left(z,C_p m\right)\right\}\cup \left\{|\gamma_{\ust^{L(N)}}(z,\ust_0^{L(N)})|\ge \frac{N^3}{(\log N)^{10}}m^\beta\right\}.
		\]
		Then
		\begin{align}
			\prob&\left(\widetilde{L}(1,z) \relmiddle| B^N_{(1)}\cap B^N_{(2)}\right)	\notag	\\	
			&\le \prob\left(|\mathrm{LE}(S^z[0,\tau(\ust_0^{L(N)})])|\ge \frac{N^3}{(\log N)^{10}}m^\beta,\ \mathrm{LE}(S^z[0,\tau(\ust_0^{L(N)})]) \subset B\left(z,C_p m\right) \relmiddle| B^N_{(1)}\cap B^N_{(2)}\right)	\notag	\\
			&+\prob\left(\mathrm{LE}(S^z[0,\tau(\ust_0^{L(N)})]) \not\subset B\left(z,C_p m\right) \relmiddle| B^N_{(1)}\cap B^N_{(2)}\right)	\label{tildeL}.
		\end{align}
		Since both of the events we consider in the right-hand side of (\ref{tildeL}) are independent of 
			$B^N_{(2)}$, we can omit the condition on $B^N_{(2)}$ from the conditional 
			probabilities in the right-hand side. 
	For the first term, we stop conditioning on $B^N_{(1)}$ and consider 
			$\gamma_{\ust^{L(N)}}(z,\infty)$ as the infinite LERW started at $z$. 		
	Then it follows from \cite{S18}*{Theorem 1.4} and \cite{LS19}*{Corollary 1.3} that for some universal constants $C>0$ and $c>0$,
		\begin{align*}
			P^z&\left(|\mathrm{LE}(S^z[0,\tau(\ust_0^{L(N)})])|\ge \frac{N^3}{(\log N)^{10}}m^\beta,\ \mathrm{LE}(S^z[0,\tau(\ust_0^{L(N)})]) \subset B\left(z,C_p m\right) \relmiddle| B^N_{(1)}\cap B^N_{(2)}\right)	\\
			&\le \frac{P^z\left(|\mathrm{LE}(S^z[0,\tau(\ust_0^{L(N)})])|\ge N^3 m^\beta/(\log N)^{10},\ \mathrm{LE}(S^z[0,\tau(\ust_0^{L(N)})]) \subset B\left(z,C_p m\right)\right)}{\prob\left(B^N_{(1)}\right)}	\\
			&\le C\exp\{-cN^3(\log N)^{-10-\beta}\},
		\end{align*}
		where the last inequality follows from (\ref{PBN(1)}). 
		On the other hand, by the independence, the strong Markov property and the hittability 
			of $\ust_0^{L(N)}$, 
		\begin{align*}
			P^z&\left(\mathrm{LE}(S^z[0,\tau(\ust_0^{L(N)})]) \not\subset B\left(z,C_p m\right) \relmiddle| B^N_{(1)}\cap B^N_{(2)}\right)	\\
			&\le p^{-C_p}=N^{-5}.
		\end{align*}
		Thus, we obtain that 
		\begin{equation}\label{tildeLest}
			\prob\left(\widetilde{L}(1,z) \relmiddle| B^N_{(1)}\cap B^N_{(2)} \right) \le N^{-5}.
		\end{equation}

		Next we consider events which guarantees the hittability of each branches in $\ust^{L(N)}_k$. 
		For $\zeta>0$, we define an event $\widetilde{I}(k,x,\zeta)$ by 
		\begin{align}
			\widetilde{I}(k,x,\zeta&)	\notag	\\
				=\Bigl\{&P_R^y\left(R\left[0,T_{R^y}(y,l_N m\vep_k^{1/2})\right]\cap (\ust^{L(N)}_0\cup \gamma_{\ust^{L(N)}}(x,\ust^{L(N)}_0))=\emptyset\right)\le \vep_k^\zeta \notag	\\
				&\mbox{ for all }y\in B(x,l_N m\vep_k)\Bigr\}.
		\end{align}
		and let $\widetilde{I}(k,\zeta)=\bigcap_{x\in D_k} \widetilde{I}(k,x,\zeta)$. 
		Then by \cite{SaSh18}*{Lemma 3.2}, there exist universal constants $\zeta_4>0$ 
			and $C<\infty$ such that for all $k\ge 1$, $m\ge 1$, and $x\in D_k$,
		\[
			\prob(\widetilde{I}(k,x,\zeta_4))\ge1- C\vep_k^5,
		\]
		which combined with $|D_k|\le C\vep_k^{-3}$ yields 
		\begin{equation}\label{tildeI}
			\prob(\widetilde{I}(k,\zeta_4))\ge 1-C\vep_k^2.
		\end{equation}
		
		Finally, let 
		\begin{align}
			\widetilde{L}(k,z)=&\left\{\gamma_{\ust^{L(N)}}(z,\infty)\mbox{ exits }B\left(z,l_N m\vep_{k-1}^{1/3}\right)\mbox{ before hitting }\ust^{L(N)}_{k-1}\right\}	\notag	\\
			&\cup\left\{\left|\gamma_{\ust^{L(N)}}(z,\ust^{L(N)}_{k-1})\right|\ge \vep_k^{1/5}\frac{N^3}{(\log N)^{10}}m^\beta\right\},
		\end{align}
		be the event for the length of the attached branch for each $k\ge 2$ and $z\in D_k$. 
		We set $\widetilde{M}_1=B^N_{(1)}\cap B^N_{(2)}\cap \widetilde{I}(1,\zeta_4)\cap (\bigcap_{z\in D_1}\widetilde{L}(1,z)^c)$ 
		and $\widetilde{M}_k=\widetilde{M}_{k-1}\cap \widetilde{I}(k,\zeta_4)\cap (\bigcap_{z\in D_k}\widetilde{L}(k,z)^c)$ inductively for $k=2,3,\cdots,k_0$. 
		
		Since $|D_1|\le CN^4$, it follows from (\ref{tildeLest}) that 
		\[
			\prob(\widetilde{M}_1)\ge (1-N^{-1})\exp\{-cN^3(\log N)\}.
		\]
		By applying the argument for (\ref{epseps}) again, 
		we have that , 
		\begin{align*}
			\prob&\left(\gamma_{\ust^{L(N)}}(z,\infty)\mbox{ exits }B\left(z,\frac{N}{(\log N)^{100}}m\vep_{k-1}^{1/3}\right)\mbox{ before hitting }\ust^{L(N)}_{k-1}
			\relmiddle| \widetilde{M}_{k-1}\right)	\\
			&\le 
			C\vep_{k-1}^{-3}\vep_{k-1}^{c\zeta_4\vep_{k-1}^{-1/6}}. 
		\end{align*}
		Again we stop conditioning on $\widetilde{M}_1$ and consider 
			$\gamma_{\ust^{L(N)}}(z,\infty)$ as the infinite LERW started at $z$. 
		By \cite{S18}*{Theorem 1.4} and \cite{LS19}*{Corollary 1.3}, 
		\begin{align*}
			\prob(\widetilde{L}&(2,z)\mid \widetilde{M}_{1})	\\
			\le&\prob\biggl(\left\{\gamma_{\ust^{L(N)}}(z,\ust^{L(N)}_1)\subset B\left(z,
			l_N m\vep_{1}^{1/3}\right)\right\} \cap \left\{\left|\gamma_{\ust^{L(N)}}(z,\ust^{L(N)}_{1})\right|\ge \vep_1^{1/5}\frac{N^3}{(\log N)^{10}}m^\beta\right\} \mid \widetilde{M}_{1}\biggr)	\\
			&+C\vep_{1}^{-3}\vep_{1}^{c\zeta_4\vep_{1}^{-1/6}}	\\	
			\le &\frac{\exp\{-\vep_2^{-c\beta/3+1/4}\cdot\vep_2^{-1/20}\cdot N^{3-\beta}(\log N)^{99}\}}{\prob(\widetilde{M}_1)}+C\vep_{1}^{-3}\vep_{1}^{c\zeta_4\vep_{1}^{-1/6}}	\\
			\le &\exp\{-\vep_2^{-1/12}N^3(\log N)^{99}\}.
		\end{align*}
		Combining this with (\ref{tildeI}), we obtain
		\[
			\prob(\widetilde{M}_2\mid \widetilde{M}_{1})\ge 1-C\vep_2^2,
		\]
		and by iteration, we have that $\prob(\widetilde{M}_k\mid \widetilde{M}_{k-1})\ge 1-C\vep_k^2$.
		Hence we can conclude that 
		\begin{align*}
			\prob(\widetilde{M}_{k_0})&=\prob(\widetilde{M}_1)\prod_{k=2}^{k_0}\prob(\widetilde{M}_k\mid \widetilde{M}_{k-1})	\\
			&\ge (1-N^{-1})\exp\{-cN^3\log N\}\prod_{k=1}^\infty(1-C\vep_k^2)	\\
			&\ge C\exp\{-cN^3\log N\},
		\end{align*}
		for some universal constant $C$,$c>0$. 
		On the event $\widetilde{M}_{k_0}$, we have that there exists some universal constant $C>0$ such that 
			for all $z\in \partial_i B(0,Nm/3(\log N)^{100})$, 
			$d_\ust(0,z)\le CN^3m^\beta/(\log N)^{10}$ holds. 
		
		Once we construct $\ust^{L(N)}_{k_0}$, we proceed with Wilson's algorithm. 
		This time, in the same argument as for (\ref{filll}), we have that conditioned on $\widetilde{M}_{k_0}$, 
			with conditional probability larger than some universal constant $c>0$, 
			$d_\ust(0,z)\le CN^3m^\beta/(\log N)^{10}$ holds for all $z\in B(0,Nm/3(\log N)^{100})$. 
		
		Finally, we consider ``$\vep_k$-net'' of annuli around the boundary of $B(0,Nm)$ and repeat the 
			argument of Lemma $\ref{leminclu}$. 
		Then we have that there exist universal constants $0<C,C',c,c'<\infty$ and an event $\widetilde{M}^N$ with 
			$\prob(\widetilde{M}^N)\ge C\exp\{-cN^3\log N\}$, such that the following statements hold: 
		\begin{enumerate}[(1)]
		\setlength{\leftskip}{2em}
			\item $|B_\ust(0,CN^3m^\beta/(\log N)^{300})|\ge cN^3m^3/(\log N)^{300}$,
			\item $\gamma_\ust(z,\gamfty)\not\subset B(0,c_2 l_N m)$  for all $z\in B(0,l_N m/3$, 
			\item $d_\ust(0,z)\le C_2N^3m^\beta/(\log N)^{10}$ for all $z\in B(0,l_N m/3$,
			\item $|B_\ust(0,N^3m^\beta)|\le \frac{1}{4}N^3m^3$.
		\end{enumerate}
		We bound the effective resistance $\Reff{0}{B_\ust(N^3m^\beta)}$ from below. 
		By (2), points outside the Euclidean ball $B(0,l_Nm/3)$ is connected to the 
			spiral $\gamfty$ outside the smaller ball $B(0,c_2l_N m)$. 
		Combining this with (3), all the paths on $\ust$ connecting the origin and 
			$B_\ust(0,C_2N^3m^\beta/(\log N)^{10})$ contains the part of $\gamfty$ 
			inside the $B(0,c_2 l_N m)$. 
		By the series law of effective resistance (see for example \cite{LP16}, section 2.3), 
			we have that $\Reff{0}{B_\ust(0,N^3m^\beta)^c}\ge c_3N^3m^\beta/(\log N)^{300}$ 
			for some universal constant $c_3>0$.
		Thus we have $\Reff{x}{B_\ust(0,N^3m^\beta)^c}\ge c_3N^3m^\beta/2(\log N)^{300}$ 
			for all $x\in B(0,c_3N^3m^\beta/2(\log N)^{300})$ , which completes the proof. 		
	\end{proof}
	

\section{Heat kernel fluctuations}



	In this section, we will show Theorem \ref{main-thm}, 
		quanched heat kernel fluctuations for the three-dimensional UST. 
	We start with lower fluctuations and then move on to upper fluctuations. 

	Recall that $p_{n}^\ust$ stands for the quenched heat kernel defined in (\ref{hkdef}). 
	
	\begin{theo}\label{HKLower}
		$\prob$-a.s.,
		\begin{equation}\label{hklower}
			\liminf_{n\to\infty} (\log\log{n})^{\frac{\beta-1}{3+\beta}} n^{\frac{3}{3+\beta}}p_{2n}^\ust(0,0)=0.
		\end{equation}
	\end{theo}
	
	\begin{proof}
		By \cite{BK06}*{Theorem 4.1}, we have that 
		\[
			p_{2r|B_\ust(0,r)|}^\ust(0,0)\le \frac{2}{|B_\ust(0,r)|},
		\]
		for any realisation of $\ust$. 
		Let $t_n=n|B_\ust(0,n)|$ and $u_n=n^{-3/\beta}|B_\ust(0,n)|$, then we have 
		\begin{align*}
			p_{2t_n}^\ust(0,0)\le \frac{2}{|B_\ust(0,0)|}&=\frac{2}{t_n^{3/(3+\beta)}n^{-3/(3+\beta)}|B(0,n)|^{\beta/(3+\beta)}}	\\
			&=2t_n^{-3/(3+\beta)}u_n^{-\frac{\beta}{3+\beta}}.
		\end{align*}
		By (\ref{vlowfluc3}), $u_n^{-\frac{\beta}{3+\beta}}>(\log \log n)^{\frac{\beta-1}{3+\beta}}$ infinitely often, almost surely, which completes the proof. 
	\end{proof}
	\begin{remark}
		We proved the existence of some exponent of log-logarithmic term which causes lower heat kernel fluctuation. 
		However, The exponent $\displaystyle \frac{\beta-1}{3+\beta}$ of the log-logarithmic term of $p^\ust_{2n}$ 
			which appears in (\ref{hklower}) is not necesarily a sharp estimate. 
		The critical exponent of the log-logarithmic term has not been obtained even in the critical Galton-Watson tree case. 
	\end{remark}
	
	\begin{prop}
		There exists some constant $\alpha>0$ such that $\prob$-a.s., 
		\begin{equation}\label{hkupper}
			\limsup_{n\to\infty}(\log \log)^{-\alpha}n^{\frac{3}{3+\beta}}p^\ust_{2n}(0,0)=\infty.
		\end{equation}
	\end{prop}
	\begin{proof}
		Let 
		\[
			N_i=(4/c_{10})^{1/3}(\log i)^{1/4},\ \ \ m_i=e^{i^2}/N_i,
		\]
		where $c_{10}$ is as in the statement of Proposition \ref{vvreff}. 
		We follow the construction of a subtree of $\ust$ in $B_\infty(0,N_im_i)$ given in the proof of 
			Proposition \ref{vvreff}. 
		First, let $\gamfty$ be the infinite LERW started at the origin. 
		Then at stage $i\ (i\ge 1)$, we use all vertices in $B_\infty(0,N_im_i)$ and write $\ust_i$ 
			for the obtained tree. 
		Similarly to (\ref{inc2}), we have good separation of scales. 
		By Proposition \ref{vvreff}, conditioned on $\ust_{i-1}$, the event $\widetilde{M}^{N_i}$ 
			occurs with conditional probability greater than $\red{:}$. 
		Now we apply \cite{KuMi08}*{Proposition 3.2} to this $\ust_i$. 
		We set $R=N_i^3m_i^\beta$, $\lambda=1$ and $\vep=c/4(\log N_i)^{300}$ 
			where $c$ is as given in (\ref{reffbound}). 
		Let $r\mathrel{:}[0,\infty]\rightarrow[0,\infty]$ be $r(x)=x$. 
		Then by Proposition \ref{vvreff}, we can set $m$ (which appears in 
			\cite{KuMi08}*{Proposition 3.2}) by $m=c(\log N_i)^{-300}$. 
		Plugging these into (3.6) of \cite{KuMi08}, we have 
		\[
			p^\ust_{2n}(0,0)\ge c'N_i^{-3}m_i^{-3}\ \ \ for\ \ \ n\le \frac{cN_i^6m_i^{3+\beta}}{32(\log N_i)^{300}},
		\]
		for $c>0$ in (\ref{reffbound}) and some constant $c'>0$. 
		Thus, taking $T=\frac{c}{32}N_i^6m_i^{3+\beta}(\log N_i)^{-300}$, it holds that on the event $\widetilde{M}^{N_i}$, we have 
		\[
			T^{\frac{3}{3+\beta}}p^\ust_{2T}(0,0)\ge \left(\frac{c}{64}\right)^{\frac{3}{3+\beta}}c'N_i^{\frac{9-3\beta}{3+\beta}}(\log N_i)^{-\frac{300}{3+\beta}}\ge c''(\log\log T)^{\frac{9-3\beta}{2(3+\beta)}-\vep},
		\]
		for some $c''>0$ and $\vep>0$ which is small. 
		Finally we apply the Borel-Cantelli argument. 
		Since
		\[
			\sum_i c_9\exp\{-c_10N_i^3(\log N_i)\}\ge \sum_i i^{-1}=\infty,
		\]
		by the conditional Borel-Cantelli lemma, we obtain the lower bound (\ref{hkupper}). 
	\end{proof}


\begin{bibdiv}
\begin{biblist}

\bib{ACHS20}{arXiv}{
      author={Angel, O.},
      author={Croydon, D.~A.},
      author={Hernandez-Torres, S.},
      author={Shiraishi, D.},
       title={Scaling limits of the three-dimensional uniform spanning tree and
  associated random walk},
      date={2020},
      eprint={2003.09055},
      archivePrefix={arXiv},
      primaryClass={math.PR}
}

\bib{BCK-1}{article}{
   author={Barlow, M.~T.},
   author={Croydon, D.~A.},
   author={Kumagai, T.},
   title={Subsequential scaling limits of simple random walk on the
   two-dimensional uniform spanning tree},
   journal={Ann. Probab.},
   volume={45},
   date={2017},
   number={1},
   pages={4--55},
}

\bib{BCK21}{article}{
	author={Barlow, M.~T.},
	author={Croydon, D.~A.},
	author={Kumagai, T.},
      title={Quenched and averaged tails of the heat kernel of the two-dimensional uniform spanning tree},
      journal={Probab. Theory Related Fields},
      note={\textit{to appear}. Also available at arXiv:2104.03462.}
}

\bib{BK06}{article}{
   author={Barlow, M.~T.},
   author={Kumagai, T.},
   title={Random walk on the incipient infinite cluster on trees},
   journal={Illinois J. Math.},
   volume={50},
   date={2006},
   number={1-4},
   pages={33--65},
}

\bib{BM}{article}{
   author={Barlow, M.~T.},
   author={Masson, R.},
   title={Spectral dimension and random walks on the two dimensional uniform
   spanning tree},
   journal={Comm. Math. Phys.},
   volume={305},
   date={2011},
   number={1},
   pages={23--57},
}

\bib{BLPS01}{article}{
    AUTHOR = {Benjamini, I.},
    author={Lyons, R.},
    author={Peres, Y.},
    author={Schramm, O.},
     TITLE = {Uniform spanning forests},
   JOURNAL = {Ann. Probab.},
    VOLUME = {29},
      YEAR = {2001},
    NUMBER = {1},
     PAGES = {1--65},
}

\bib{BDW}{article}{
   author={Benoist, S.},
   author={Dumaz, L.},
   author={Werner, W.},
   title={Near-critical spanning forests and renormalization},
   journal={Ann. Probab.},
   volume={48},
   date={2020},
   number={4},
   pages={1980--2013},
}

\bib{Bur-Pem}{article}{
   author={Burton, R.},
   author={Pemantle, R.},
   title={Local characteristics, entropy and limit theorems for spanning
   trees and domino tilings via transfer-impedances},
   journal={Ann. Probab.},
   volume={21},
   date={1993},
   number={3},
   pages={1329--1371},
}

\bib{Cro-Kum}{article}{
   author={Croydon, D.},
   author={Kumagai, T.},
   title={Random walks on Galton-Watson trees with infinite variance
   offspring distribution conditioned to survive},
   journal={Electron. J. Probab.},
   volume={13},
   date={2008},
   pages={no. 51, 1419--1441},
}

\bib{Gri}{book}{
   author={Grimmett, G.},
   title={The random-cluster model},
   series={Grundlehren der mathematischen Wissenschaften [Fundamental
   Principles of Mathematical Sciences]},
   volume={333},
   publisher={Springer-Verlag, Berlin},
   date={2006},
   pages={xiv+377},
}

\bib{H}{article}{
   author={H\"{a}ggstr\"{o}m, O.},
   title={Random-cluster measures and uniform spanning trees},
   journal={Stochastic Process. Appl.},
   volume={59},
   date={1995},
   number={2},
   pages={267--275},
}

\bib{Hal-Hut}{arXiv}{
	author={Halberstam, N.},
	author={Hutchcroft, T.},
	title={Logarithmic corrections to the Alexander-Orbach conjecture for the four-dimensional uniform spanning tree},
	date={2022},
	eprint={2211.01307},
	archivePrefix={arXiv},
	primaryClass={math.PR},
}

\bib{Hol-Sun}{article}{
   author={Holden, N.},
   author={Sun, X.},
   title={SLE as a mating of trees in Euclidean geometry},
   journal={Comm. Math. Phys.},
   volume={364},
   date={2018},
   number={1},
   pages={171--201},
}

\bib{Hut}{article}{
   author={Hutchcroft, T.},
   title={Interlacements and the wired uniform spanning forest},
   journal={Ann. Probab.},
   volume={46},
   date={2018},
   number={2},
   pages={1170--1200},
}

\bib{Hut-2}{article}{
   author={Hutchcroft, T.},
   title={Universality of high-dimensional spanning forests and sandpiles},
   journal={Probab. Theory Related Fields},
   volume={176},
   date={2020},
   number={1-2},
   pages={533--597},
}

\bib{Ken}{article}{
   author={Kenyon, R.},
   title={The asymptotic determinant of the discrete Laplacian},
   journal={Acta Math.},
   volume={185},
   date={2000},
   number={2},
   pages={239--286},
}

\bib{Kir}{article}{
	author={Kirchhoff, G.}, 
	title={Ueber die Aufl\"{o}sung der Gleichungen, auf welche man bei der Untersuchung der linearen Vertheilung galvanischer Str\"{o}me gef\"{u}hrt wird},
	journal={Ann. Phys.},
	volume={148}, 
	date={1847},
	pages={497--508},
}

\bib{K}{article}{
   author={Kozma, G.},
   title={The scaling limit of loop-erased random walk in three dimensions},
   journal={Acta Math.},
   volume={199},
   date={2007},
   number={1},
   pages={29--152},
}

\bib{KuMi08}{article}{
   author={Kumagai, T.},
   author={Misumi, J.},
   title={Heat kernel estimates for strongly recurrent random walk on random media},
   journal={J. Theoret. Probab.},
   volume={21},
   date={2008},
   number={4},
   pages={910--935},
}

\bib{Lawler}{article}{
   author={Lawler, G.~ F.},
   title={A self-avoiding random walk},
   journal={Duke Math. J.},
   volume={47},
   date={1980},
   number={3},
   pages={655--693},
}

\bib{CutTimes}{article}{
   author={Lawler, G.~ F.},
   title={Cut times for simple random walk},
   journal={Electron. J. Probab.},
   volume={1},
   date={1996},
   pages={no. 13, approx. 24 pp.},
}

\bib{L96}{book}{
    AUTHOR = {Lawler, G.~F.},
     TITLE = {Intersections of random walks},
    SERIES = {Modern Birkh\"{a}user Classics},
      NOTE = {Reprint of the 1996 edition},
 PUBLISHER = {Birkh\"{a}user/Springer, New York},
      YEAR = {2013},
     PAGES = {iv+223},
}

\bib{L99}{incollection}{
    AUTHOR = {Lawler, G.~F.},
     TITLE = {Loop-erased random walk},
 BOOKTITLE = {Perplexing problems in probability},
    SERIES = {Progr. Probab.},
    VOLUME = {44},
     PAGES = {197--217},
 PUBLISHER = {Birkh\"{a}user Boston, Boston, MA},
      YEAR = {1999},
}

\bib{LaLi10}{book}{
   author={Lawler, G.~F.},
   author={Limic, V.},
   title={Random walk: a modern introduction},
   series={Cambridge Studies in Advanced Mathematics},
   volume={123},
   publisher={Cambridge University Press, Cambridge},
   date={2010},
   pages={xii+364},
}

\bib{LSW}{article}{
	author={Lawler, G.~F.},
	author={Schramm, O.},
	author={Werner, W.},
	title={Conformal invariance of planar loop-erased random walks and uniform spanning trees},
	journal={Ann. Probab.},
	volume={32},
	date={2004},
	number={1B},
	pages={939 -- 995},
}

\bib{LP16}{book}{
   author={Lyons, R.},
   author={Peres, Y.},
   title={Probability on trees and networks},
   series={Cambridge Series in Statistical and Probabilistic Mathematics},
   volume={42},
   publisher={Cambridge University Press, New York},
   date={2016},
   pages={xv+699},
}

\bib{LSarx}{arXiv}{
	author={Li, X.},
	author={Shiraishi, D.},
	title={Convergence of three-dimensional loop-erased random walk in the natural parametrization},
	date={2018},
	eprint={1811.11685.},
	archivePrefix={arXiv},
	primaryClass={math.PR},
}	

\bib{LS19}{article}{
   author={Li, X.},
   author={Shiraishi, D.},
   title={One-point function estimates for loop-erased random walk in three
   dimensions},
   journal={Electron. J. Probab.},
   volume={24},
   date={2019},
   pages={Paper No. 111, 46},
}

\bib{LS21}{article}{
   author={Li, X.},
   author={Shiraishi, D.},
   title={The H\"{o}lder continuity of the scaling limit of three-dimensional
   loop-erased random walk},
   journal={Electron. J. Probab.},
   volume={27},
   date={2022},
   pages={1--37},
}

\bib{M09}{article}{
    AUTHOR = {Masson, R.},
     TITLE = {The growth exponent for planar loop-erased random walk},
   JOURNAL = {Electron. J. Probab.},
  FJOURNAL = {Electronic Journal of Probability},
    VOLUME = {14},
      YEAR = {2009},
     PAGES = {no. 36, 1012--1073},
}

\bib{P91}{article}{
    AUTHOR = {Pemantle, R.},
     TITLE = {Choosing a spanning tree for the integer lattice uniformly},
   JOURNAL = {Ann. Probab.},
  FJOURNAL = {The Annals of Probability},
    VOLUME = {19},
      YEAR = {1991},
    NUMBER = {4},
     PAGES = {1559--1574},
}

\bib{SaSh18}{article}{
   author={Sapozhnikov, A.},
   author={Shiraishi, D.},
   title={On Brownian motion, simple paths, and loops},
   journal={Probab. Theory Related Fields},
   volume={172},
   date={2018},
   number={3-4},
   pages={615--662},
}

\bib{Sch}{article}{
	author={Schramm, O.},
	title={Scaling limits of loop-erased random walks and uniform spanning trees},
	journal={Isr. J. Math.},
	volume={118},
	date={2000},
	pages={221--288},
}

\bib{S18}{article}{
    AUTHOR = {Shiraishi, D.},
     TITLE = {Growth exponent for loop-erased random walk in three
              dimensions},
   JOURNAL = {Ann. Probab.},
  FJOURNAL = {The Annals of Probability},
    VOLUME = {46},
      YEAR = {2018},
    NUMBER = {2},
     PAGES = {687--774},
}

\bib{S2}{article}{
   author={Shiraishi, D.},
   title={Hausdorff dimension of the scaling limit of loop-erased random
   walk in three dimensions},
   journal={Ann. Inst. Henri Poincar\'{e} Probab. Stat.},
   volume={55},
   date={2019},
   number={2},
   pages={791--834},
}

\bib{Szn}{article}{
   author={Sznitman, A.~-S.},
   title={Vacant set of random interlacements and percolation},
   journal={Ann. of Math. (2)},
   volume={171},
   date={2010},
   number={3},
   pages={2039--2087},
}

\bib{Wei19}{article}{
   author={Weihrauch, T.},
   title={A characterization of effective resistance metrics},
   journal={Potential Anal.},
   volume={51},
   date={2019},
   number={3},
   pages={437--467},
}

\bib{W96}{inproceedings}{
    AUTHOR = {Wilson, D.~B.},
     TITLE = {Generating random spanning trees more quickly than the cover
              time},
 BOOKTITLE = {Proceedings of the {T}wenty-eighth {A}nnual {ACM} {S}ymposium
              on the {T}heory of {C}omputing ({P}hiladelphia, {PA}, 1996)},
     PAGES = {296--303},
 PUBLISHER = {ACM, New York},
      YEAR = {1996},
}

\bib{W10}{article}{
  title = {Dimension of the loop-erased random walk in three dimensions},
  author = {Wilson, D.~B.},
  journal = {Phys. Rev. E},
  volume = {82},
  issue = {6},
  pages = {062102},
  date = {2010},
}

\end{biblist}
\end{bibdiv}

\end{document}